\documentclass[12pt,epsfig,amsfonts]{amsart}
\usepackage{amsmath,amsthm,amssymb,amscd,epsfig,color}
\usepackage{ulem}
\usepackage[all]{xy}
\usepackage{graphicx}
\usepackage{mathrsfs}
\usepackage{ulem}
\usepackage{pb-diagram} 

\newcommand{\ve}{\varepsilon}
\newcommand{\spe}{\mathrm{sp}}

\newtheorem{prop}{Proposition}[section]
\newtheorem{lemma}[prop]{Lemma}

\newtheorem{thm}[prop]{Theorem}
\newtheorem{cor}[prop]{Corollary}

\theoremstyle{definition}
\newtheorem{definition}[prop]{Definition}

\theoremstyle{remark}
\newtheorem{remark}[prop]{Remark}

\numberwithin{equation}{section}

\newcommand\bE{{\mathbb E}}

\newcommand\bI{{\mathbb I}}

\newcommand\bN{{\mathbb N}}

\newcommand\bP{{\mathbb P}}

\newcommand\bR{{\mathbb R}}

\newcommand\bZ{{\mathbb Z}}

\newcommand\cA{{\mathcal A}}
\newcommand\cB{{\mathcal B}}

\newcommand\cD{{\mathcal D}}

\newcommand\cF{{\mathcal F}}

\newcommand\cI{{\mathcal I}}

\newcommand\cL{{\mathcal L}}

\newcommand\cP{{\mathcal P}}

\newcommand\cS{{\mathcal S}}

\newcommand\cW{{\mathcal W}}

\newcommand\fD{{\mathfrak D}}

\newcommand{\hr}{{\hat{r}}}
\newcommand{\hh}{{\hat{h}}}

\newcommand{\Lip}{{\mathrm{Lip}}}
\newcommand{\LL}{{\mathrm{LL}}}
\DeclareMathOperator{\diam}{diam}
\usepackage{dirtytalk}

\begin{document}

\author{Juho Lepp\"anen and Hiroki Takahasi}

\address{Department of Mathematics, Tokai University, Kanagawa, 259-1292, JAPAN} 
\email{leppanen.juho.heikki.g@tokai.ac.jp}

\address{Keio Institute of Pure and Applied Sciences (KiPAS), Department of Mathematics,
Keio University, Yokohama,
223-8522, JAPAN} 
\email{hiroki@math.keio.ac.jp}

\subjclass[2020]{Primary 37A25, 37A40; Secondary 37A55}
\thanks{{\it Keywords}: piecewise affine map, nonstationary dynamical system, decay of correlations, multivariate normal approximation}


\title[Nonstationary heterochaos baker maps]
 {Decay of correlations and normal approximation for
nonstationary heterochaos baker maps}
 \maketitle

 \begin{abstract}
 We study statistical properties of nonstationary 
 compositions of a sequence of heterochaos baker maps. 
For maps whose central direction is mostly expanding, we establish three main results: exponential rate of memory loss
for a broad class of measures (Theorem~\ref{thm:main-ml}), a functional correlation bound with stretched exponential decay
(Theorem~\ref{thm:main-fcb}), and an error bound 
for the multivariate central limit theorem
(CLT) in the Wasserstein-1 distance (Theorem~\ref{thm:main-clt}). The proofs are based on the construction of suitable
Gibbs--Markov induced maps for the nonstationary compositions.
In a certain special case we obtain a concrete estimate on the
rate of convergence in the multivariate CLT (Theorem~\ref{thm:nearby}).
 \end{abstract}

\tableofcontents

\newpage
\section{Introduction}

Statistical and stochastic properties of nonstationary dynamical systems have attracted considerable attention in recent years. 
Unlike stationary dynamical systems generated by a single transformation, nonstationary  dynamical systems are described by sequential compositions
$
T_{1,n} :=
T_n\circ\cdots\circ T_1,
$
where the individual maps $T_k\colon X \to X$ governing the time evolution 
of states
are allowed to vary with time. Such systems provide natural models for physical processes evolving in temporally changing environments and describe dynamics out of equilibrium. 
If the initial state $x\in X$ is sampled randomly, then in general the time
series $(T_{1,n}(x))_{n\geq 1}$ of states is a nonstationary process, even
when all the maps $T_k$ preserve a common measure. 
In the special case where the dynamical rules evolve according to a stochastic process, which is commonly assumed to be generated by an ergodic measure-preserving transformation, the system is called a random dynamical system.

Topics of sustained interest in the study of statistical properties of nonstationary dynamical systems include:
\begin{itemize}
    \item rates of memory loss and decay of correlations \cite{GO13, MO14, SYZ13}, 
    \item central limit theorems (CLTs) and invariance principles \cite{CR07,HNTV17}, 
    \item concentration inequalities \cite{AimRou,CC25,KL21},
    \item extreme value laws \cite{FFV17}, and
    \item linear response \cite{DGS23,DGTS25,GL26}.
\end{itemize}
A variety of techniques from the stationary setting, including spectral methods,
projective-metric methods, and coupling methods, have been adapted to obtain
memory loss and correlation decay bounds in the nonstationary setting.
For uniformly expanding and hyperbolic systems, these methods have also led to
a well-developed theory of statistical limit laws, including CLTs with explicit
convergence rates \cite{DM22,DL25,DH25,LW24}.
By comparison, quantitative limit theorems for nonstationary systems beyond
the uniformly hyperbolic setting are less developed.
Results in this direction include limit theorems for sequential and random
intermittent maps and for random nonuniformly hyperbolic systems
admitting tower extensions \cite{DL26,FFV18,H22,NTV18,S22}.
In the partially hyperbolic setting, quenched exponential decay of correlations \cite{NW15}, central and local limit theorems \cite{CNW23}, and linear and higher-order response \cite{CN24, DH26} have been established for certain random skew products.

The purpose of this paper is to investigate statistical properties of
nonstationary dynamical systems generated by heterochaos baker maps.
Introduced in 
\cite{STY21}, and later in \cite{TY23} in a more general form, heterochaos baker
maps form a simple model of partially hyperbolic systems. They are a
parametrized family of piecewise affine skew product maps on the unit
square with a uniformly expanding direction and a central direction in
which contracting and expanding behavior coexist. This structure gives
rise to several complex phenomena of partially hyperbolic dynamics,
including the coexistence of invariant sets with different unstable
dimensions, while remaining sufficiently explicit to permit a detailed
mathematical analysis. Heterochaos baker maps also provide natural
geometric models of the Dyck system \cite[Section~4]{Kri74}, thereby establishing
a remarkable connection between partially hyperbolic dynamics and
symbolic dynamics \cite{TY23}.

We focus on deterministic compositions of heterochaos baker maps whose parameters vary with time within a regime where the central direction is mostly expanding. We make no assumptions on the statistical properties of the parameter sequence. We establish quantitative statistical properties that hold uniformly over all admissible sequences of parameters, thereby providing a new class of nonstationary dynamical systems with such properties.

Our first main result establishes exponential memory loss for sufficiently regular initial distributions. More precisely, for any two Borel probability measures $\mu$ and $\nu$ that are absolutely continuous with respect to the Lebesgue measure and have H\"older continuous densities, we establish
\begin{align}\label{eq:ml_intro}
| (T_{1,n})_*\mu -  (T_{1,n})_*\nu | = O(s^n),
\end{align}
where $s \in (0,1)$ and $|\cdot|$ denotes the total variation of signed measures. This indicates that the initial state of the system is forgotten exponentially fast in a statistical sense. Such memory loss is a fundamental statistical property from which many other statistical limit theorems may be derived. Note that \eqref{eq:ml_intro} implies the exponential decay of correlations for H\"older continuous observables.

The proof of \eqref{eq:ml_intro} is based on a coupling argument as in
\cite{KKM19}, which in turn relies on the existence of suitable
Gibbs--Markov structures. A principal technical contribution of this paper is the construction of 
the associated
induced maps in the nonstationary setting together with uniform estimates on their inducing times. Our inducing scheme extends the Gibbs--Markov structure introduced in \cite{T25} in the stationary setting to arbitrary time-dependent compositions of heterochaos baker maps. In particular, \eqref{eq:ml_intro} extends 
the exponential decay of correlations in
\cite{T25} for heterochaos baker maps with mostly expanding center to 
nonstationary compositions. 
In the stationary setting, the decay of correlations for heterochaos baker maps has been analyzed in a broader parameter range \cite{AB26, T25,TT25}, including parameters where the central direction is mostly neutral or mostly contracting.

Leveraging the Gibbs--Markov structure, we establish a functional correlation
bound with stretched exponential decay for the nonstationary system.
Functional correlation bounds generalize classical pair and multiple
correlation bounds by allowing observables to depend on finite fragments
of the system's trajectory. Such bounds have previously been established for 
various uniformly and nonuniformly expanding/hyperbolic systems \cite{FV22, LNN25, LNN26}.
We use the functional correlation bound, together with Stein's method
as in \cite{LS20}, to
derive error bounds in the Wasserstein-1 distance for the multivariate CLT.
Under the assumption that the smallest eigenvalue of the covariance matrix grows linearly with time, we obtain the convergence rate
$
O(N^{-1/2}\log^5 N)
$ in the multivariate CLT.
For sequences of maps belonging to a sufficiently small neighborhood of a fixed heterochaos baker map, we show that this covariance growth condition follows from a nondegeneracy assumption on the limiting covariance for the fixed map.

\subsection*{Organization of the paper}
The remainder of the paper is organized as follows. In Sections~\ref{HC-def-sec} and~\ref{sec:results} we introduce the heterochaos baker maps and state our main results concerning the decay of correlations and 
normal approximation. Section~\ref{sec:dynamics} is devoted to the analysis of the dynamics of nonstationary heterochaos baker maps. It contains one of the principal technical results of the paper (Proposition~\ref{induce-prop}), which establishes the existence of Gibbs--Markov induced maps together with estimates on the tails of their
inducing times. In Section~\ref{sec:memory_loss} we prove exponential memory loss for nonstationary heterochaos baker maps. The result is deduced from an abstract coupling theorem for nonstationary systems admitting a suitable Gibbs--Markov structure. Sections~\ref{sec:fcb_proof} and~\ref{sec:clt} are devoted to the proofs of the functional correlation bound and the error bound in the multivariate CLT, respectively. In Section~\ref{sec:nearby} we establish the linear growth of the smallest eigenvalue of the covariance matrix of Birkhoff sums for compositions of nearby maps. 
Appendix~\ref{sec:app_a} contains the proof of the abstract coupling theorem in Section~3.

\subsection{The heterochaos baker map}\label{HC-def-sec}
Let $M\geq2$ be an integer
and let $X=[0,1]^2$. 
Write $(x_u,x_c)$ 
for the coordinates on $X$. 
We introduce a heterochaos baker map $f_a\colon X\to X$, where $a\in(0,\frac{1}{M})$ is a parameter.
Define $\tau_a\colon[0,1]\to[0,1]$ by 
\[\tau_a(x_u)=\begin{cases}\vspace{1mm}
\displaystyle{\frac{x_u-(i-1)a}{a}}&\text{ on }[(i-1)a,ia),\ i=1,\ldots,M,\\ \displaystyle{\frac{x_u-Ma}{1-Ma}}&\text{ on }
[Ma,1].
\end{cases}\]
We introduce two alphabets consisting of $M$ symbols
\[D(\alpha)=\{\alpha_1,\ldots,\alpha_M\}\ \text{ and }\ D(\beta)=\{\beta_1,\ldots,\beta_M\},\] and set
  \[D=D(\alpha)\cup D(\beta).\]
 For each $\gamma\in D$ define a region $\Omega_a(\gamma)\subset X$ by
\[\Omega_a(\alpha_i)=\left[(i-1)a,ia\right)\times
\left[0,1\right]\ \text{ for }i=1,\ldots,M,\]
and
\[\Omega_a(\beta_i)=\begin{cases}
\vspace{1mm}\displaystyle{\left[Ma,1\right]\times
\left[\frac{i-1}{M },\frac{i }{M }\right)}&
\text{ for }i=1,\ldots,M-1,\\
\displaystyle{\left[Ma,1\right]\times
\left[\frac{i-1 }{M},1\right]}&\text{ for }i=M,
\end{cases}\]
and put
\[\Omega_a(\alpha)=\bigcup_{i=1}^M \Omega_{a}(\alpha_i)\ \text{ and }\ \Omega_a(\beta)=\bigcup_{i=1}^M \Omega_{a}(\beta_i).\]
Notice that these regions are pairwise disjoint and altogether cover $X$. 
Define $f_{a}\colon X\to X$ by
\[\begin{split}
  f_a(x_u,x_c)=
  \begin{cases}
\displaystyle{\left(\tau_a(x_u),\frac{x_c}{M}+\frac{i-1}{M}\right)}&\text{ on }\Omega_a(\alpha_i),\ i=1,\ldots,M,\\
   \displaystyle{\left (\tau_a(x_u),Mx_c-i+1\right)}&\text{ on }\Omega_a(\beta_i),\ 
   i=1,\ldots,M.
   \end{cases}
\end{split}\]

\begin{figure}
\begin{center}\includegraphics[height=4cm,width=11cm]
{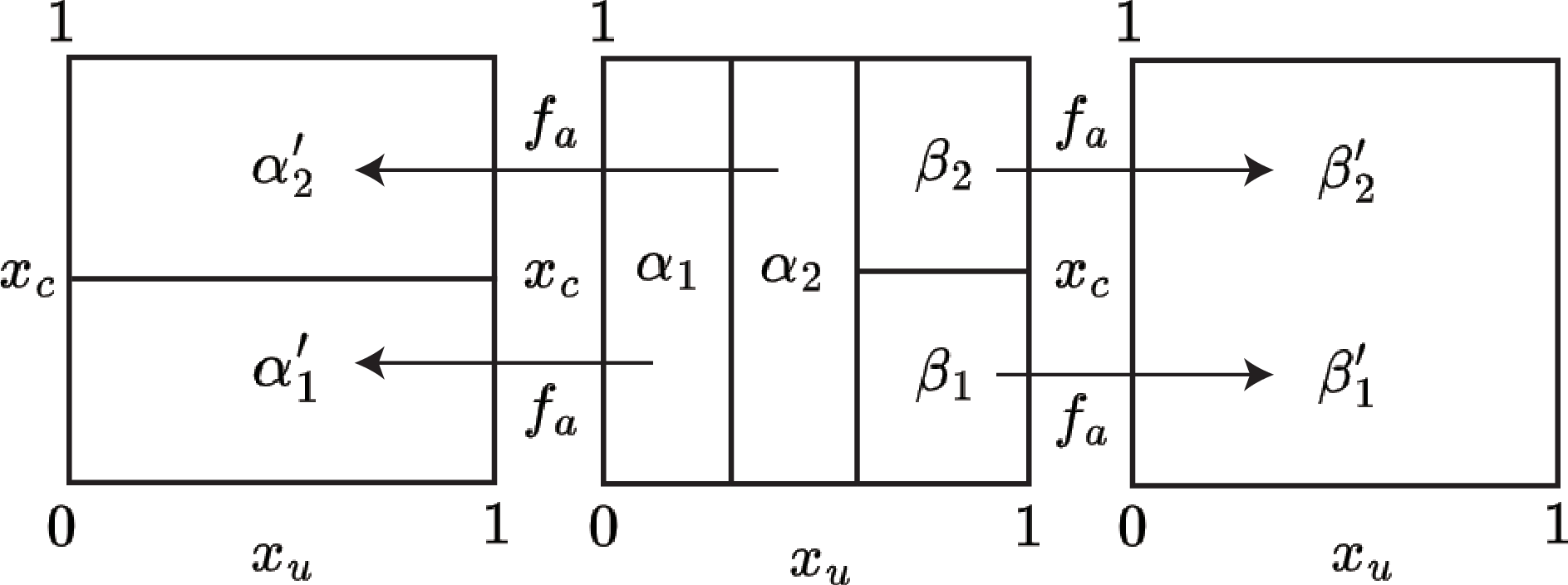}
\caption
{The heterochaos baker map with $M=2$. For each $\gamma\in D$,
the region $\Omega_a(\gamma)$  and its $f_a$-image are labeled with $\gamma$ and $\gamma'$ respectively: $f_a(\Omega_a(\beta_1) )=[0,1]\times[0,1)$ and $f_a(\Omega_a(\beta_2) )=X$.}\label{fig1}
\end{center}
\end{figure}
See \textsc{Figure}~\ref{fig1} for the case $M=2$. The dynamics of $f_{a}$ is partially hyperbolic:
the $x_u$-direction is expanding by factor $\frac{1}{a}$ or $\frac{1}{1-Ma}$ 
and the $x_c$-direction is a center, contracting by factor $\frac{1}{M}$ on $\Omega_{a}(\alpha)$ and expanding by factor $M$ on $\Omega_{a}(\beta)$.
It is easy to see that $f_a$ leaves invariant the Lebesgue measure on $X$, which we denote by $m$.
 Given a sequence $(a_k)_{k\geq1}$ of parameters in $(0,\frac{1}{M})$, we will consider the associated nonstationary system $(f_{a_k})_{k\geq1}$.

Define $\phi_a\colon X\to\mathbb R$ by
  \begin{equation}\label{geometric-c}\phi_a(x)=\begin{cases}-1&\text{ for }  x\in\Omega_a(\alpha),\\
 1&\text{ for }  x\in\Omega_{a}(\beta).\end{cases}\end{equation} 
Since $f_a$ is a skew product over $\tau_a$,
the ergodicity of $\tau_a$ with respect to the Lebesgue measure on $[0,1]$ implies that 
for $m$-almost every $x\in X$,
  \[\lim_{n\to\infty}
  \frac{1}{n}\sum_{k=0}^{n-1}\phi_a(f_a^k(x))=1-2Ma.\]
  Hence,
  the local stability in the $x_c$-direction along the orbit $(f_a^n(x))_{n\geq0}$
is determined by the sign of this limit value.
  We classify $f_a$ into three types:
\begin{itemize}
\item $a\in(0,\frac{1}{2M})$ (mostly expanding center);

 \item $a\in(\frac{1}{2M},\frac{1}{M})$
 (mostly contracting center);
 \item $a=\frac{1}{2M}$ (mostly neutral center).
 \end{itemize}
The parameter $a=\frac{1}{2M}$ is a bifurcation point at which the ``central Lyapunov exponent'' crosses zero.
For an individual $f_a$, 
 the decay of correlations for H\"older continuous observables with respect to $m$ was investigated in \cite{T25,TT25}.
 For $a\neq\frac{1}{2M}$, an exponential decay was shown 
in \cite{T25} by constructing a tower with exponential tail and applying the abstract result of Young \cite{You98}. 
For $a=\frac{1}{2M}$, the decay rate was shown to be polynomial \cite{TT25} with the optimal rate $N^{-3/2}$.

 \subsection{Statements of results}\label{sec:results}
 In our results stated below,
 we consider nonstationary heterochaos baker maps that consist only of maps with mostly expanding center.
Let $(a_k)_{k\geq1}$ be a sequence of parameters in $(0,\frac{1}{2M})$. For $k,\ell\in\mathbb N$ with $k\leq\ell $ we write \begin{equation}\label{map-notation} T_k=f_{a_k},\ 
T_{k,\ell}=T_{\ell}\circ\cdots\circ T_k.\end{equation}
Notice that $T_{k,k}=T_k$. Put
\[
\underline a=\inf_{k\geq1}a_k\ \text{ and }\ 
\overline a=\sup_{k\geq1}a_k.
\]
We say $(a_k)_{k\geq1}$ is {\it admissible} if 
\begin{equation}\label{eq:param_range}\max\left\{0,\frac{4M\overline a-1}{4M^2\overline a}\right\}<\underline a\leq\overline a<\frac{1}{2M}.\end{equation}

 Under condition \eqref{eq:param_range}, note that $\underline{a}\to\frac{1}{2M}$ as $\overline{a}\to\frac{1}{2M}$. In other words, the admissible parameter range shrinks to a point as coming closer to the bifurcation point $\frac{1}{2M}$. The first inequality in \eqref{eq:param_range} does not imply any restriction on $\underline{a}$ if  $\overline{a}\le\frac{1}{4M}$.
 It is easy to see that any constant sequence in $(0,\frac{1}{2M})$ is admissible: if $\underline a=\overline a<\frac{1}{2M}$ then 
 $\frac{4M\overline a-1}{4M^2\overline a}<\overline a$, and so the first inequality in \eqref{eq:param_range} holds.
In our results below we assume  $(a_k)_{k\geq1}$ is admissible.
 
All probability measures on $X$ we consider are Borel and assumed to be absolutely continuous with respect to $m$.  
Let $d\in\mathbb N$. 
For $x,x'\in\mathbb R^d$ let $|x-x'|$ denote the Euclidean distance between them.
For $g \colon X \to \bR^d$ and $\theta \in (0,1]$, we define
\[\|g\|_\infty=\sup_{x\in X}|g(x)|\ \text{ and }\ 
| g |_{\theta} = \sup_{\substack{x,x'\in X\\ x \neq x'}}
\frac{|g(x)-g(x')|}{|x-x'|^\theta},
\]
and set
\[\| g \|_{\theta} = \| g \|_\infty + | g |_{\theta}.\]
We also write
$| g |_{\Lip} = | g |_{1}$ and 
$\| g \|_{\Lip} = \| g \|_{1}$.

\begin{thm}[Exponential memory loss]\label{thm:main-ml}
Let $\mu_1$, $\mu_2$ be probability measures on $X$
satisfying $| d\mu_i/dm|_\theta \le L$ for $i=1,2$ for some 
$\theta \in (0,1]$ and $L \ge 0$.
Then, for any $n \ge 1$,
\[
| ( T_{1,n} )_* \mu_1 - ( T_{1,n} )_* \mu_2 |
\le C_L e^{-C' n}.\]
Here 
$C' > 0$ depends only on $\theta$ and $M, \overline{a}, \underline{a}$,
and $C_L$ depends in addition on $L$.
\end{thm}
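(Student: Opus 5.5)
Our plan is to follow the route indicated in the introduction: build a nonstationary Gibbs--Markov induced structure for the compositions $T_{k,\ell}$, and then run a coupling argument in the spirit of \cite{KKM19}. The key steps, in order, are as follows.

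\emph{Step 1 (symbolic description).} We first encode the dynamics. Each $T_k=f_{a_k}$ acts on the $x_u$-coordinate as the full-branched piecewise affine map $\tau_{a_k}$. Its expansion factors are at least $\min\{1/\overline a,\,1/(1-M\underline a)\}>1$. On the $x_c$-coordinate it acts by a contraction $x_c\mapsto x_c/M+(i-1)/M$ or an expansion $x_c\mapsto Mx_c-i+1$. A word $\gamma_1\cdots\gamma_n\in D^n$ determines a cylinder $\Omega^{(k)}(\gamma_1\cdots\gamma_n)$. Under $T_{k,k+n-1}$ it is mapped affinely, and its center height is governed by the walk $S_n=\sum_j\phi_{a_{k+j}}$ (with the $\pm1$ of \eqref{geometric-c}). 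Call a time $n$ a \emph{good return} when the center coordinate has been fully expanded. Concretely, this means the cylinder's $x_c$-extent is mapped onto $[0,1]$, which occurs at first times the walk reaches a new maximum level beyond the accumulated $\alpha$-contractions. At such a time the image of the cylinder is a full rectangle $X$ or a union of full-height strips.

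\emph{Step 2 (induced maps and tails).} Next, for each starting time $k$ we define an inducing time $R_k\colon X\to\bN$, taken as the first good return. This yields a partition of $X$ into rectangles $Y$ on which $T_{k,k+R_k-1}$ is affine onto $X$, i.e.\ full branched, with bounded distortion for free. The crucial estimate is a tail bound
\[
m(R_k>n)\le Ce^{-cn}\quad\text{uniformly in }k.
\]
Here admissibility \eqref{eq:param_range} enters. The probability of a $\beta$-step at time $j$ is $1-Ma_j\ge 1-M\overline a>1/2$, so the center walk has positive drift at least $1-2M\overline a$. We would bound the first-passage/ladder structure through an exponential martingale $e^{-tS_n}/\prod_j\bE e^{-t\phi_{a_j}}$. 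The lower bound on $\underline a$ is needed to control the number of possible $\alpha$-symbols ($M$ choices, each of mass $a_j$) against this drift. We would choose $t$ so that $\sup_j M a_j e^{t}+(1-Ma_j)e^{-t}<1$ combined with the combinatorial factor. I expect this uniform tail estimate to be the main obstacle, since the walk is nonstationary and the counting of branch words must be uniform in $(a_k)$. I would first try a large-deviation bound for independent non-identical $\pm1$ steps (Hoeffding), which is legitimate because under $m$ the symbols are independent across times by the affine structure.

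\emph{Step 3 (coupling).} Finally we apply an abstract coupling theorem for nonstationary systems with uniformly exponential-tailed Gibbs--Markov induced maps, as deferred to the appendix. We begin with densities of bounded $\theta$-H\"older seminorm $\le L$. Pushing forward along full branches keeps the densities in a cone of log-regular functions, since affine branches contract the H\"older seminorm by the expansion factors. At each return we couple a fixed fraction $\epsilon=\epsilon(\theta,L)$ of the remaining mass of $\mu_1$ and $\mu_2$, because their restricted densities both dominate a multiple of $m$. Renewal arguments with exponential tails then give that the uncoupled mass at time $n$ is $\le C_Le^{-C'n}$, which bounds the total variation. The dependence of $C_L$ on $L$ enters only through the initial time needed to enter the cone, while $C'$ depends only on $\theta$ and the tail constants, hence only on $M,\overline a,\underline a$.
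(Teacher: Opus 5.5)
Your architecture is the paper's: a nonstationary Gibbs--Markov scheme with affine full branches onto $\mathrm{int}(X)$ and uniformly exponential tails, fed into the abstract nonstationary coupling result (Theorem~\ref{thm:decdec}, Corollary~\ref{cor:ml_orig_maps}), then a reduction for H\"older densities.

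\textbf{Where you differ: the tail estimate.} The paper bounds each branch by $m(W)\le \overline a^{R_W(\alpha)}(1-M\underline a)^{R_W(\beta)}$ (Lemma~\ref{area}). It then counts branches by Dyck-type paths in the nonstationary Markov diagram, $\#P^*(n)\lesssim (4M)^{n/2}$ (Lemma~\ref{path-number}). This gives the right rate when $a_k\equiv a$. However, it takes worst-case factors separately for $\alpha$- and $\beta$-steps, so it only closes under $4M\overline a(1-M\underline a)<1$, which is exactly admissibility (Lemma~\ref{calculus}).

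Your route is different and works:
\begin{itemize}
\item the $\alpha/\beta$ \emph{types} along the orbit are independent under $m$, with $\beta$ at step $j$ having probability $1-Ma_{k+j-1}$ (the paper uses this too, but only in Proposition~\ref{R-fin});
\item the inducing time depends on the types alone;
\item $\{R_k>n\}\subset\{S_n\le 0\}$.
\end{itemize}
Hoeffding then gives $m(R_k>n)\le e^{-n(1-2M\overline a)^2/2}$ uniformly in $k$. This is shorter and needs only $0<\underline a\le\overline a<\frac{1}{2M}$; the lower bound enters only through the expansion constant $\lambda$.

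So your remark that the lower bound on $\underline a$ is needed to beat a combinatorial factor is mistaken. No counting occurs, because the $M$ choices of $\alpha$-symbol are already in the probability $Ma_j$ of an $\alpha$-step. Say ``types'', not ``symbols'': the full symbol sequence is not independent, since after $\alpha_i$ the next $\beta$-symbol is forced to be $\beta_i$.

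\textbf{Three points need tightening.}

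First, the inducing time must be a genuine cut. Take it to be the first time $S$ reaches a strict new maximum: $R=1$ on $\Omega(\beta)$, and first passage to level $1$ on $\Omega(\alpha)$. Your ``concrete'' description, that the center extent is mapped onto $[0,1]$, is not enough. On $\Omega_{a_k}(\alpha_i)\cap T_k^{-1}(\Omega_{a_{k+1}}(\beta_i))$ this already happens at time $2$ with center derivative $1$, so the induced map is not expanding and (A3) fails. With the strict-maximum definition:
\begin{itemize}
\item the center derivative of each branch is $M$;
\item $S_j<S_R$ for $j<R$, which gives (A4) with $K=1$;
\item $m(R_k=1)\ge 1-M\overline a$ gives the aperiodicity (A5) the coupling needs. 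The paper instead takes $R\ge 2$ and uses $m(R_k=2),m(R_k=3)\ge\delta_\#$.
\end{itemize}

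Second, in the nonstationary setting you cannot couple ``at each return''. Returns of $\mu_1$ and $\mu_2$ are asynchronous and the maps change with time. One subtracts a multiple of $m$ at common deterministic times, using the uniform lower bound on base mass (Proposition~\ref{prop:a5}) that aperiodicity provides.

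Third, the hypothesis $|d\mu_i/dm|_\theta\le L$ allows densities that vanish, and your cone argument needs densities bounded away from zero. The paper writes
\[
(T_{1,n})_*\mu_1-(T_{1,n})_*\mu_2=2\bigl[(T_{1,n})_*\mu_1'-(T_{1,n})_*\mu_2'\bigr],\qquad \mu_i'=(\mu_i+m)/2,
\]
and handles H\"older densities by passing to the metric $|x-y|^\theta$.
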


For the remainder of this section, we fix a probability measure $\mu$ on $X$
 whose density $\rho$ is Lipschitz continuous with
\[\underline{\rho} = \inf_{x \in X} \rho(x) > 0.\]
For a function $F\colon X^n \to \bR$ and $\theta \in (0,1]$, where $n \ge 1$, we define
\[
|F|_{\theta}
= \max_{1 \le i \le n}
\sup_{x \in X^n}
\sup_{\substack{t,t'\in X\\ t \neq t'  }}
\frac{|F(x(t/i)) - F(x(t'/i))|}{|t-t'|^\theta}.
\]
Here $x(t/i) \in X^n$ denotes the vector obtained from
$x \in X^n$ by replacing its $i$th component with $t \in X$.
We say $F$ is {\it separately $\theta$-H\"older continuous} if
$|F|_{\theta} < \infty$, and in this case we define
\[
\|F\|_{\theta} = \|F\|_\infty + |F|_{\theta}.
\]

\begin{thm}[Functional correlation bound]\label{thm:main-fcb} 
There exist constants $C,C' > 0$ depending only on $\theta, \underline{\rho}, | \rho |_{ \Lip }$ 
and  $M, \overline{a}, \underline{a}$,
such that for any integer $n \ge 2$, 
any
separately $\theta$-H\"older continuous function $F \colon X^n \to \bR$, any integer $1\leq l<n$ and any sequence 
$0 \le i_1 < \cdots < i_n$ of integers, we have
\[
\begin{split}
&\biggl| \int_X H(x,x) \,  d \mu(x) - \iint_{X^2}  H(x,y) \, d \mu(x) \, d \mu(y)  \biggr| 
   \le  C \Vert F \Vert_{\theta} ( i_l + 1)  e^{ - C' \sqrt{i_{l + 1} - i_l}  },
\end{split}
\]
where $H\colon X^2\to\mathbb R$ is given by
\[
H(x,y) = F(  T_{1, i_1}(x), \ldots, T_{1, i_l}(x),  T_{1, i_{l + 1 }}(y), \ldots, 
 T_{1, i_{ n }}(y)  ).\]
\end{thm}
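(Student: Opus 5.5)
We prove Theorem~\ref{thm:main-fcb} by splitting the trajectory at the gap between $i_l$ and $i_{l+1}$ and applying the memory loss estimate. Two facts are used: the Gibbs--Markov induced maps of Proposition~\ref{induce-prop}, and the exponential memory loss of Theorem~\ref{thm:main-ml}. The idea is to approximate $H$ by a function whose dependence on the past coordinates $T_{1,i_1}(x),\dots,T_{1,i_l}(x)$ is \emph{constant on elements of a fine partition}. On each element we then compare the pushforward of the conditional measure with the pushforward of $\mu$.

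\textbf{Step 1: partition at an intermediate time.} Put $p=i_l$ and $q=i_{l+1}$, and choose an intermediate time $r=p+\lfloor (q-p)/2\rfloor$. Let $\mathcal P_r$ be the partition of $X$ into maximal domains on which $T_{1,r}$ is affine. These are rectangles, since the $f_a$ are piecewise affine skew products, and $T_{1,r}$ maps each one affinely onto a rectangle $T_{1,r}(Q)$.

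The obstacle is that center contraction happens only on $\Omega(\alpha)$. So $T_{1,r}(Q)$ need not be a full rectangle: its $x_c$-height may be small. The $x_u$-side, however, is full.

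Along such an element, the points $T_{1,i_j}(x)$ for $j\le l$ vary by at most $\diam T_{1,i_j}(Q)$. In the $x_u$ direction this is at most $\underline\lambda^{-(r-i_j)}$, where $\underline\lambda = \min(1/\overline a,\, 1/(1-M\underline a))>1$. In the $x_c$ direction the variation is controlled by the excess of $\beta$-visits over the remaining time.

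We therefore restrict to the \emph{good} elements: those whose central expansion between time $i_j$ and time $r$ is at least $e^{c(r-i_j)}$ for every $j$. The \emph{bad} set has measure that we bound using the inducing-time tails of Proposition~\ref{induce-prop}. This is exactly where the stretched exponential $e^{-C'\sqrt{q-p}}$ appears. The tail bound at time $s$ is only stretched exponential, uniformly over the nonstationary sequence, and we must also take a union over $j\le l$ (equivalently over starting times $\le p$). This union is the source of the factor $(i_l+1)$.

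\textbf{Step 2: freezing the past.} On a good element $Q$ we replace $F$ by its value with the first $l$ arguments frozen at $T_{1,i_j}(x_Q)$, for a fixed reference point $x_Q\in Q$. The cost of this replacement is
\[
l\,|F|_\theta\,\max_j \diam T_{1,i_j}(Q)^\theta .
\]
Summed over $j$ this is $\lesssim \|F\|_\theta e^{-c\theta (r-p)}$. On the bad set we simply use the crude bound $2\|F\|_\infty$.

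\textbf{Step 3: decorrelation via memory loss.} With the past frozen, the left-hand side becomes
\[
\sum_Q \mu(Q)\Bigl[\int G_Q\circ T_{r+1,\cdot}\, d\bigl((T_{1,r})_*\mu_Q\bigr) - \int G_Q\circ T_{r+1,\cdot}\, d\bigl((T_{1,r})_*\mu\bigr)\Bigr],
\]
where $\mu_Q$ is $\mu$ conditioned on $Q$, and $G_Q$ is a bounded function of the future orbit. The future orbit is read after the remaining $q-r$ steps.

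The measure $(T_{1,r})_*\mu_Q$ has a density supported on a rectangle with full $x_u$-side. That density is Lipschitz, with constant controlled by $|\rho|_{\Lip}/\underline\rho$, because it is affine-transformed and the map expands.

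This is not yet directly the Hölder-density hypothesis of Theorem~\ref{thm:main-ml}, since the support may be a thin strip. To handle this, we run the induced Gibbs--Markov map until a return time at which the image is a full square. The probability that no return has occurred by time $(q-r)/2$ is again stretched-exponentially small. After the return, we apply the coupling argument of Theorem~\ref{thm:main-ml}, as in the abstract coupling theorem, starting from the regular measure. The difference of pushforwards then has total variation $\lesssim e^{-C'(q-r)}$, and hence contributes at most $2\|F\|_\infty$ times this quantity.

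\textbf{Combining.} Adding up Steps 1--3 gives a bound of the form
\[
C\|F\|_\theta\bigl((i_l+1)e^{-C'\sqrt{q-p}} + e^{-c(q-p)}\bigr),
\]
which is the claimed estimate. The main obstacle is Step 1 together with the first part of Step 3: obtaining measure bounds on the set where the center has not yet expanded that are uniform in the nonstationary sequence. We expect this to follow from the tail estimates of Proposition~\ref{induce-prop} combined with a large-deviation (Markov-type) bound for the tail.
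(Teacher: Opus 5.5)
There is a genuine gap. It sits where you yourself flag the main obstacle: Step~3 never produces a measure to which a memory-loss result applies.

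\begin{itemize}
\item If $Q$ is a maximal affine domain of $T_{1,r}$, then $T_{1,r}(Q)=[0,1]\times I$, where $I$ is an $M$-adic interval of length $M^{-s}$. Here $s$ is the number of unmatched $\alpha$-symbols in the itinerary of $Q$ up to time $r$ (the stack depth in the Dyck picture), and $s$ can be as large as $r$.
\item Such a conditional measure is covered by neither memory-loss result. Theorem~\ref{thm:main-ml} needs a H\"older density on all of $X$ with a fixed constant. Theorem~\ref{thm:decdec} needs a regular measure, but this density vanishes on part of $Y$, so $|\cdot|_{\LL}=\infty$.
\item Your remedy of running the induced map ``until a return at which the image is a full square'' does not work as stated. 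The elements $W\in\cP_{r+1}$ of Proposition~\ref{induce-prop} have $x_c$-height $M^{-1}$ or $M^{-2}$. So one application of $F_W$ sends $W\cap([0,1]\times I)$ onto a strip of height $M^{-s+1}$ or $M^{-s+2}$, not onto $X$. You need of order $s$ returns, and Proposition~\ref{induce-prop}(f) controls return tails only under $m$, not under a measure whose density on the strip is of size $M^{s}$.
\item The claim that the density of $(T_{1,r})_*\mu_Q$ has Lipschitz constant $\lesssim|\rho|_{\Lip}/\underline\rho$ ``because the map expands'' is false. $T_{1,r}|_Q$ multiplies $x_c$ by $M^{\phi_{1,r}}$, which is $<1$ whenever $\phi_{1,r}<0$ on $Q$.
\item The accounting in Step~1 is also off. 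Proposition~\ref{induce-prop}(f) gives an exponential tail, not a stretched exponential one. Your bad set is a large-deviation event for the independent variables $\phi_{k+1}\circ T_{1,k}$ under $m$ (as in the proof of Proposition~\ref{R-fin}), not an inducing-time event. So your explanation of where $\sqrt{g}$ and $(i_l+1)$ come from does not match any estimate you would actually carry out.
\end{itemize}

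The missing idea in the paper is to condition on cylinders of the iterated inducing scheme started at time $1$, not on affine domains at a deterministic time.

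\begin{itemize}
\item Take the successive return times $\chi_1=R_1$ and $\chi_j=\chi_{j-1}+R_{\chi_{j-1}+1}\circ T_{1,\chi_{j-1}}$. Partition $Y$ into the sets $C_{i_*}(\ell_1,\dots,\ell_{p+1};W_1,\dots,W_{p+1})$, which record all returns up to $i_*=i_l+\lfloor g/3\rfloor$ and the first return $\ell_{p+1}$ after $i_*$.
\item By Proposition~\ref{induce-prop}(c), $T_{1,\ell_{p+1}}$ maps each cylinder affinely onto $Y$ without contracting. So the pushed-forward conditional measure has log-Lipschitz density with constant $\le|\rho|_{\Lip}/\underline\rho<K_2$, hence is regular. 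Theorem~\ref{thm:decdec} then couples it with $(\bar T_{1,\ell_{p+1}})_*\bar\mu$ from time $\ell_{p+1}+1$.
\item The past is frozen via Proposition~\ref{induce-prop}(c)--(d), using the $Q_l^*>t$ returns in $(i_l,i_*]$, at cost $\lambda^{-\theta t}$.
\item The event $\ell_{p+1}>i_\#$ is exponentially small.
\item The event $Q_l^*\le t$ forces a return gap of length $\ge\lfloor g/3\rfloor/(t+1)$. After a union over the at most $i_*+1$ possible starting times of that gap, its measure is $\lesssim(i_*+1)\,t\,e^{-C'g/(t+1)}$.
\end{itemize}
Taking $t=\lceil\sqrt g\rceil$ is what produces both $\sqrt g$ and the factor $(i_l+1)$.

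Your route can be repaired, but only with an argument that is absent from the proposal:
\begin{itemize}
\item bound the tail of $s$, and of the time for the walk $\phi_{r+1,r+n}$ to climb $s$ levels, by large deviations for the independent symbols;
\item observe that at that time each piece is mapped onto $X$ by a non-contracting affine map, so its density is log-Lipschitz with constant $\le|\rho|_{\Lip}/\underline\rho$;
\item compare each such piece, and also $(T_{1,q})_*\mu$ (since $(T_{1,r})_*\mu$ need not have a H\"older density), with $m$ via Theorem~\ref{thm:main-ml} applied to shifted sequences.
\end{itemize}
Carried out this way, the argument would rely on the explicit Bernoulli/Dyck structure of the maps rather than on the abstract Gibbs--Markov structure. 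It appears to yield a purely exponential bound.
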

We emphasize that the constants $C, C'$ in 
Theorem~\ref{thm:main-fcb}  are independent of all the indices $n,l,i_1,\ldots,i_n$. This uniformity is important for the application of Theorem~\ref{thm:main-fcb} to the multivariate CLT rates below.
The proof of Theorem~\ref{thm:main-fcb} is roughly based on the approach in 
\cite[Section~6]{LNN25}, where a bound similar in spirit 
was established for a class of nonuniformly expanding interval maps. 
In contrast to the setting of \cite{LNN25}, heterochaos baker maps do not expand the Euclidean distance.
To overcome this difficulty, we use the Gibbs--Markov structure established in Proposition~\ref{induce-prop}.
This leads to the stretched exponential rate 
in Theorem~\ref{thm:main-fcb}, 
which is likely suboptimal.
 In contrast to \cite{LNN25}, the Gibbs-Markov structure established in Proposition~\ref{induce-prop} is not associated with any first return map.

Let 
$(g_k)_{k \ge 0}$ be a sequence of functions
$g_k \colon X \to \bR^d$ such that
\[
\sup_{k \ge 0} \| g_k \|_{\theta} \le L,
\]
for some $\theta\in(0,1]$ and $L \ge 1$. 
For each $k \in \bZ_+ = \{0,1,\ldots\}$ we set 
\[ \bar g_k = g_k - \int_X g_k \circ T_{1,k} \, d \mu, \quad
\xi_k = \bar g_k \circ T_{1,k},
\]
and consider $(\xi_k)_{k\geq0}$ as a stochastic process on $(X,\mu)$. 
We apply Theorem~\ref{thm:main-fcb} to establish
an error bound for the multivariate CLT in the Wasserstein-$1$ distance.
Before stating the result, we introduce
some further notation. For $N \in \mathbb N$, 
let
\begin{align*}
	S_N &=  \sum_{k=0}^{N-1} \xi_k, \quad 
	\Sigma_N = \int_X S_N S_N^{ \mathrm{T} } \, d \mu, \quad W_N = \Sigma_N^{-1/2} S_N,
\end{align*}
where we assume that the covariance matrix
$\Sigma_N \in \bR^{d \times d}$ is positive definite. We write $\lambda_{\min}(N)$ for the least eigenvalue of 
$\Sigma_N$. 

Denote by $N_d(0, \mathbb{I}_{d})$ the $d$-dimensional normal distribution with mean zero and 
covariance matrix $\mathbb{I}_{d}$, where $\mathbb{I}_{d}$ denotes the $d \times d$ 
identity matrix, 
and write $\cL(\xi)$ for the law (distribution) of a random variable $\xi$. 
For $Z \sim N_d(0, \mathbb{I}_{d})$, we define the Wasserstein-$1$ distance between the laws of 
$W_N$ and $Z$ by
\[
d_{ \mathcal{W} }(  \cL( W_N ) ,  \cL( Z )  ) = \sup_{ h \in \mathcal{W} } \biggl| \int h(W_N) \, d \mu  - E( h (  Z )  )  \biggr|,\]
 where $\mathcal W$ denotes the class of all $1$-Lipschitz functions, namely
\[
\mathcal{W} = \{  h \colon\bR^d \to \bR \colon  | h(x) - h(y) | \le |x-y|   \text{ for all }x,y\in \mathbb R^d\}.\]

\begin{thm}[Error bound in the multivariate CLT]\label{thm:main-clt} There exists a constant $C > 0$ depending only on 
$\theta, \underline{\rho}, | \rho |_{ \Lip }$ and  $M, \overline{a}, \underline{a}$ such that 
\begin{align}\label{eq:w1_bound}
\begin{split}
d_{ \mathcal{W} }(  \cL( W_N ),  \cL(  Z )   ) 
\le& C L^3  \sqrt{d} \max\{1 , \log \lambda_{ \min } (N) \} \lambda_{ \min }^{-3/2} (N)  N (  1 + \log^4N ) \\
&+ C \sqrt{d} L \lambda_{ \min }^{-1/2}(N).
\end{split}
\end{align}
In particular, $W_N \stackrel{\cD}{\to} Z$ as $N \to \infty$, provided that \[\lambda_{\min}^{-1}(N) = o(N^{-2/3} \log^{-10/3} N  ),\]
where $\stackrel{\cD}{\to}$ denotes the convergence in distribution.
\end{thm}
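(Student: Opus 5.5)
The plan is to obtain Theorem~\ref{thm:main-clt} from Theorem~\ref{thm:main-fcb} through Stein's method for multivariate normal approximation, following the scheme of \cite{LS20}. The quoted functional correlation bound has the form $C\|F\|_\theta (i_l+1)e^{-C'\sqrt{i_{l+1}-i_l}}$. I will use it as the only dynamical input: it gives an approximate independence of the "past block" $(\xi_{i_1},\dots,\xi_{i_l})$ from the "future block" $(\xi_{i_{l+1}},\dots,\xi_{i_n})$ once the gap is $K\ge C''\log^2 N$. The factor $(i_l+1)\le N$ is then absorbed by $e^{-C'\sqrt K}\le N^{-p}$.

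First I set $W=W_N$ and fix a test function $h\in\mathcal W$. To handle the lack of smoothness, I smooth $h$ at scale $\varepsilon$ (Gaussian convolution), which costs $O(\sqrt d\,\varepsilon)$. I then solve the Stein equation $\Delta f-x\cdot\nabla f=h_\varepsilon-Eh_\varepsilon(Z)$. Its solution has bounded derivatives up to order three, with the third derivative of size $O(\log(1/\varepsilon))$ in the Lipschitz-only setting. Choosing $\varepsilon\asymp\lambda_{\min}^{-1/2}(N)$ produces the term $C\sqrt d L\lambda_{\min}^{-1/2}(N)$ and the factor $\max\{1,\log\lambda_{\min}(N)\}$.

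Next I write $\int [\mathrm{tr}(\nabla^2 f(W))-W\cdot\nabla f(W)]\,d\mu$ and expand $W\cdot\nabla f(W)=\sum_k \Sigma_N^{-1/2}\xi_k\cdot\nabla f(W)$. For each $k$ I use the decomposition $W=W^{(k)}+(W-W^{(k)})$, where $W^{(k)}$ removes the indices $j$ with $|j-k|\le K$. Taylor expansion to second order gives three kinds of terms.
\begin{itemize}
\item[(i)] $\int \Sigma_N^{-1/2}\xi_k\cdot\nabla f(W^{(k)})\,d\mu$. Here $\xi_k$ is separated from the remaining variables by a gap $K$ on each side, so a one- or two-sided application of Theorem~\ref{thm:main-fcb} decouples it. It then becomes approximately $\int\xi_k\,d\mu\cdot(\dots)=0$, since the $\xi_k$ are centred under $\mu$; the second marginal is again $\mu$, which matches the form of $H$.
\item[(ii)] Second-order terms $\sum_{|j-k|\le K}\xi_k\xi_j^{\mathrm T}\nabla^2 f(W^{(k)})$. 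These are compared with $\mathrm{tr}(\nabla^2 f(W))$ by using $\Sigma_N=\sum_{k,j}\int\xi_k\xi_j^{\mathrm T}d\mu$, together with a further decoupling by Theorem~\ref{thm:main-fcb} of the pair $(\xi_k,\xi_j)$ from $W^{(k)}$. This requires a second-level removal $W^{(k,j)}$ with another gap $K$. Pair correlations with $|j-k|>K$ are controlled by the same bound, as is the replacement of $W^{(k)}$ by $W$.
\item[(iii)] Third-order remainders, bounded by $\|\nabla^3 f\|\,\|\Sigma_N^{-1/2}\|^3 N K^2 L^3$.
\end{itemize}

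Collecting terms gives $L^3\sqrt d\,\log(\cdot)\,\lambda_{\min}^{-3/2}N K^2$, and with $K\asymp\log^2N$ this is the factor $N\log^4N$. The decorrelation errors are $N^{O(1)}e^{-C'\sqrt K}$, which is negligible after enlarging $C''$. The "in particular" statement is then immediate: the first term tends to zero exactly when $\lambda_{\min}^{3/2}\gg N\log^5N$, that is, when $\lambda_{\min}^{-1}=o(N^{-2/3}\log^{-10/3}N)$.

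The main obstacle is checking that each observable fed into Theorem~\ref{thm:main-fcb} is separately $\theta$-H\"older with norm bounded uniformly in $N$ and $k$. Functions such as $\xi_k\,\partial_i f(\Sigma_N^{-1/2}\sum_j\xi_j)$, viewed as functions of the points $T_{1,i}(x)$, have H\"older seminorm in each coordinate of order $L\,\|\nabla f\|_{\mathrm{Lip}}\lambda_{\min}^{-1/2}$. This is bounded because $\lambda_{\min}\gtrsim1$ may be assumed (otherwise the bound is trivial). A secondary point is that Theorem~\ref{thm:main-fcb} splits only into a past block and a future block, whereas $\xi_k$ sits in the middle. I would handle this by splitting twice, first past from $\{k\}\cup$future and then $k$ from the future. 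After the first split, the middle piece under the product measure is again an integral against $\mu$ of a function of $T_{1,i}(y)$, so the theorem applies again with the same constants. Each split costs $N e^{-C'\sqrt K}$.
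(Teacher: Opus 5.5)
Your argument is sound in outline, but it is not the route the paper takes, even though you cite \cite{LS20}. The paper does not smooth $h$ and never uses third derivatives. It works with the Stein solution $A_h$ for $h\in\mathcal W$ itself, relying on the result of \cite{GMS18} that $D^2A_h$ is $(1+\log)$-Lipschitz; see \eqref{eq:log_lip}. It also does not Taylor-expand around a single window $K$. Instead it invokes Sunklodas' exact identity $\mu[\Delta A(W)-W^{\mathrm T}\nabla A(W)]=\sum_{i=1}^7E_i$ from \cite[Proposition~5.3]{LS20}. That identity is built from the nested sums $W_{n,m}$ (all indices at distance $>m$ from $n$) and the telescoping increments $\delta^{n,m}=D^2A(W_{n,m-1})-D^2A(W_{n,m})$, for every $m\le N-1$. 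The comparison with the trace is encoded in the terms $E_6,E_7$ containing $\mu(\delta^{n,k})$; in your route the same job is done by the second-level removal $W^{(k,j)}$ and the re-coupling of past and future. Each $E_i$ is then bounded by the several-gaps version \eqref{eq:fcb_many_gaps} of Theorem~\ref{thm:main-fcb}, applied to the functions $F_u$. Because of the logarithmic modulus, these are only separately $\theta/2$-H\"older, with norm at most $C\fD$. A cutoff $\rho_N\asymp\log^2N$ in the $m$-sums plays the role of your $K$. The factor $\log^4N$ comes from the $N\rho_N^2$ contribution to $E_3$, just as your $K^2$ comes from the third-order remainder. The two terms of \eqref{eq:w1_bound} also arise differently. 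In the paper, $\max\{1,\log\lambda_{\min}(N)\}$ comes from evaluating the log-modulus at scale $L\|B^{-1}\|_{\spe}$. The term $C\sqrt d L\lambda_{\min}^{-1/2}(N)$ appears only in the trivial case $\|B^{-1}\|_{\spe}>1/(100L)$. For you, these come from $\log(1/\varepsilon)$ and from the smoothing cost respectively. Your route is the more classical and self-contained one: it needs only the standard bound $\|D^3f\|\lesssim 1+\log(1/\varepsilon)$ for the smoothed Stein solution, and it keeps every observable $\theta$-H\"older. The paper's route avoids smoothing and the hand-made bookkeeping of second-order terms by reusing the ready-made decomposition of \cite{LS20}. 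A small point on the ``in particular'' part: the condition is sufficient, not ``exactly'' equivalent. It uses $\log\lambda_{\min}(N)\lesssim\log N$, which follows from $\lambda_{\min}(N)\le 4L^2N^2$.
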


Let $a \in (0, \frac{1}{2M})$.
We say a function $\varphi \colon X \to \bR^d$  is {\it not a coboundary for $f_a$ in any direction} if for any unit vector $v \in \bR^d$ it is not possible to write 
\[
 \langle v,\varphi\rangle = \psi_v - \psi_v \circ f_a + c_v,
\]
where $\psi_v \colon X \to \bR$ is in $L^2(m)$, 
 $c_v \in \bR$ and $\langle\cdot ,\cdot\rangle$ denotes the inner product on $\mathbb R^d$.
In a special case of maps nearby a fixed heterochaos baker map
$f_a$ and Lipschitz observables that are not a coboundary for $f_a$ 
in any direction, we establish the linear growth of $\lambda_{\min}(N)$
in Theorem~\ref{thm:nearby} below. This leads to a concrete estimate on the
rate of convergence in the multivariate CLT.

\begin{thm}[Rate in the multivariate CLT for nearby maps]\label{thm:nearby}
Let 
$a\in(0,\frac{1}{2M})$.
Let $\mu = m$, and let 
$g_k = g$ for all $k \ge 0$, where $g \colon X \to \bR^d$ is a Lipschitz continuous function 
that is not a coboundary for $f_a$ in any direction. 
There exists 
$\varepsilon\in(0,\min\{a,\frac{1}{2M}-a\})$
such that for any sequence $(a_k)_{k\geq1}$ of parameters 
in $(0,\frac{1}{2M})$ satisfying 
$|a_k -a|< \varepsilon$ for all $k\geq1$, 
\begin{align}\label{eq:lambda_growth}
\lambda_{\min}^{-1}(N) = O(N^{-1}),
\end{align}
as $N \to \infty$. In particular, 
\begin{align}\label{eq:rate_nearby}
&d_{ \mathcal{W} }(  \cL( W_N ),  \cL(  Z )   ) = O( N^{ -1/2 } \log^5 N  ).
\end{align}
\end{thm}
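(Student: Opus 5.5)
Once \eqref{eq:lambda_growth} is known, \eqref{eq:rate_nearby} follows directly from Theorem~\ref{thm:main-clt}. With $\lambda_{\min}(N)\ge cN$, the first term in \eqref{eq:w1_bound} is $O(\log N\cdot N^{-3/2}\cdot N\log^4 N)=O(N^{-1/2}\log^5N)$ and the second is $O(N^{-1/2})$. We also need the sequence $(a_k)$ to be admissible. This holds once $\varepsilon$ is small: for the constant sequence $a$ the first inequality in \eqref{eq:param_range} is strict, and both sides depend continuously on $(\underline a,\overline a)$. Because the constants in Theorem~\ref{thm:main-clt} depend only on $M,\overline a,\underline a$, we may fix outer bounds $\underline a\ge a-\varepsilon$, $\overline a\le a+\varepsilon$, and then all constants are uniform over the admissible sequences considered. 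The real task is therefore the linear lower bound on $\lambda_{\min}(N)$, i.e.\ $v^{\mathrm T}\Sigma_N v\ge cN$ uniformly in unit vectors $v$ and in the sequence.

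Fix a unit vector $v$ and put $h=\langle v,g\rangle$. Since $m$ is invariant under every $f_{a_k}$, we have $\int g\circ T_{1,k}\,dm=\int g\,dm$, so $\xi_k=\bar g\circ T_{1,k}$ with $\bar g=g-\int g\,dm$, and
\[
v^{\mathrm T}\Sigma_Nv=\sum_{j,k<N}\int \bar h\circ T_{1,j}\,\bar h\circ T_{1,k}\,dm .
\]
Using invariance again, the $(j,k)$ term with $j\le k$ equals $\int \bar h\cdot \bar h\circ T_{j+1,k}\,dm$, where $T_{1,0}$ is the identity. Hence $v^{\mathrm T}\Sigma_Nv=\sum_{j<N}\bigl(\int\bar h^2\,dm+2\sum_{r\ge1}\int \bar h\cdot\bar h\circ T_{j+1,j+r}\,dm\bigr)$, with the inner sum truncated at $j+r<N$.

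Exponential memory loss (Theorem~\ref{thm:main-ml}), applied to the measure $\bar h\,dm$ after splitting it into positive parts with Lipschitz densities, bounds $|\int\bar h\cdot\bar h\circ T_{j+1,j+r}\,dm|$ by $C e^{-C'r}$ uniformly in $j$ and in the sequence. This gives two things. First, the truncation at $j+r<N$ costs only $O(1)$ in total. Second, for each fixed $R$ the tail $r>R$ is uniformly $O(e^{-C'R})$.

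Write $\sigma^2(v)=\int\bar h^2\,dm+2\sum_{r\ge1}\int\bar h\cdot\bar h\circ f_a^r\,dm$ for the Green--Kubo variance of $f_a$. Choose $R$ so that the tail is below $\sigma_0^2/8$, where $\sigma_0^2=\min_{|v|=1}\sigma^2(v)$. Next we need continuity in the parameters: for each $r\le R$, $\int\bar h\cdot\bar h\circ f_{b_r}\circ\cdots\circ f_{b_1}\,dm\to\int\bar h\cdot\bar h\circ f_a^r\,dm$ as all $b_i\to a$, uniformly in $|v|=1$. This holds because $h$ is Lipschitz and $f_b\to f_a$ uniformly outside a set of Lebesgue measure $O(|b-a|)$ near the discontinuity lines; composing $r$ such maps, measure preservation controls how these bad sets propagate. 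Choosing $\varepsilon$ small then gives $v^{\mathrm T}\Sigma_Nv\ge N\sigma_0^2/2-O(1)$.

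The main obstacle, I expect, is showing $\sigma_0^2>0$ from the non-coboundary hypothesis. For the single map $f_a$ the covariances decay exponentially by \cite{T25}, or by Theorem~\ref{thm:main-ml} applied to a constant sequence. The standard Gordin/Leonov argument then applies: if $\sigma^2(v)=0$, the sums $\sum_{k<n}\bar h\circ f_a^k$ stay bounded in $L^2(m)$. A weak limit of their Ces\`aro averages produces $\psi_v\in L^2$ with $\bar h=\psi_v-\psi_v\circ f_a$. This contradicts the hypothesis, so $\sigma^2(v)>0$ for each unit $v$. Continuity of $\sigma^2$ in $v$ (it is a quadratic form) and compactness of the sphere then give $\sigma_0^2>0$, which completes the proof of \eqref{eq:lambda_growth}.
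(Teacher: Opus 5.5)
Your proposal is correct, but it reaches \eqref{eq:lambda_growth} by a different route from the paper. The paper uses the reverse-martingale approximation of \cite{NTV18}. With $G_{n,v}=\sum_{j=1}^n\mathcal{L}_{j,n}(\bar g_v)$ and $H_{n,v}=\bar g_v+G_{n,v}-G_{n+1,v}\circ T_{n+1}$, it obtains $\bE[S_{n,v}^2]=\sum_{k<n}\bE[H_{k,v}^2]+O(1)$. It then compares each $\bE[H_{k,v}^2]$ with the stationary quantity $\bE[\tilde H_v^2]$. This comparison truncates $G_{n,v}$ and $\tilde G_v$ after $N(\ve_1)$ terms using $L^p$ memory loss, and uses the operator perturbation bound $\|(\mathcal{L}_a-\mathcal{L}_b)u\|_{L^\infty}\le 6M\|u\|_*|a-b|$ together with $\|\mathcal{L}_a u\|_*\le\|u\|_*$. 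Non-degeneracy then comes for free: $\tilde H_v=0$ would exhibit $\bar g_v$ directly as an $L^2$ coboundary with transfer function $-\tilde G_v$, so $\kappa=\inf_{|v|=1}\bE[\tilde H_v^2]>0$.

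You work instead entirely on the correlation side. Your steps are:
\begin{itemize}
\item the Green--Kubo expansion of $v^{\mathrm T}\Sigma_Nv$;
\item uniform exponential decay of the nonstationary correlations, from Theorem~\ref{thm:main-ml} applied to shifted sequences;
\item truncation at a fixed lag $R$;
\item a pathwise perturbation estimate for the finitely many remaining correlations (by invariance of $m$, orbits of $f_{b_r}\circ\cdots\circ f_{b_1}$ and $f_a^r$ stay $O_R(\ve)$-close off a set of measure $O_R(\ve)$);
\item a Gordin--Leonov argument (bounded $L^2$ Birkhoff sums, weak compactness and the mean ergodic theorem) to get $\sigma_0^2>0$.
\end{itemize}
Your $\sigma^2(v)$ coincides with the paper's $\bE[\tilde H_v^2]$, so the two arguments prove the same lower bound $\lambda_{\min}(N)\ge cN$. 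Your version avoids transfer operators and the $\|\cdot\|_*$ estimates. The cost is a separate coboundary argument and a somewhat more delicate estimate for compositions of discontinuous maps, whose constants grow exponentially in $R$; this is harmless because $R$ is fixed before $\ve$. The paper gets positivity directly from the martingale structure.

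Both routes rely on the point you correctly flag: the memory-loss constants must be uniform over all sequences in a fixed neighbourhood $(a-\ve_0,a+\ve_0)$. This is what allows $R$ (respectively $N(\ve_1)$) to be chosen before $\ve$.
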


For simplicity,
 in all of these main results we have only treated sequences of parameters in the mostly expanding center regime $(0,\frac{1}{2M})$. Actually, 
 all the main results 
can be slightly generalized to include sequences that contain 
parameters not contained in $(0,\frac{1}{2M})$. For more details, see Remark~\ref{gen-rem}.

\section{Dynamics of nonstationary heterochaos baker maps}\label{sec:dynamics}
In this section we analyze the dynamics of nonstationary heterochaos baker maps.  
In Section~\ref{GM-sec} we state Proposition~\ref{induce-prop}, the principal result  on the existence of a Gibbs--Markov structure with exponential tail.
The rest of this section is entirely dedicated to a proof of Proposition~\ref{induce-prop}.

\subsection{Gibbs--Markov structure}\label{GM-sec}

We treat two-sided sequences of heterochaos baker maps.
As before, 
given a two-sided sequence $(a_k)_{k\in\mathbb Z}$ of parameters in the mostly expanding center regime $(0,\frac{1}{2M})$,
 for $k,\ell\in\mathbb Z$ with $k \le \ell$ write
\[T_k=f_{a_k},\ 
	T_{ k, \ell } = 
		T_\ell \circ \cdots \circ T_k.\]
Notice that $T_{k,k}=T_k$. For convenience we set $T_{k,k-1}={\rm id}_X$, the identity map on $X$.
 Let
${\rm int}(\cdot)$ denote the interior operation in $\mathbb R^2$.

\begin{prop}[Gibbs--Markov structure]
\label{induce-prop}
Let
 $(a_k)_{k\in\mathbb Z}$ be a sequence of parameters in $(0,\frac{1}{2M})$ satisfying 
 \begin{equation}\label{eq:param_range-z}\max\left\{0,\frac{4M\overline a-1}{4M^2\overline a }\right\}
 <
 \underline a\leq\overline a<\frac{1}{2M},\end{equation}
 where
 \[
\underline a=\inf_{k\in\mathbb Z}a_k\ \text{ and }\ 
\overline a=\sup_{k\in\mathbb Z}a_k.
\]
 There exist constants $\lambda>1$, $\delta_{\#}>0$, $A>0$, $A'>0$ depending only on $M, \overline{a}, \underline{a}$,  and 
a sequence $(\mathcal P_k,R_k)_{k\in\mathbb Z}$
such that the following hold:

\begin{itemize}
\item[(a)] $\cP_k$ is a countable Borel partition of 
	a subset $X_k = \bigcup_{W \in \cP_k} W$ of ${\rm int}(X)$ such that
	$m(W) > 0$ for all $W \in \cP_k$ and $m({\rm int}(X)\setminus X_k)=0$;  
	\item[(b)] $R_k \colon X \to [2, \infty]$ is a function that is constant on
	each $W \in \cP_k$ with value $R_k(W) \in \{  2, 3,\ldots \}$;
\item[(c)] 
for all $W\in\mathcal P_k$, the map $F_W\colon x\in W\mapsto T_{k,k+R_k(W)-1}(x)\in X$
is affine, 
$F_W(W)={\rm int}(X)$
and for all $x,x'\in W$, \[|F_W(x)-F_W(x')|\geq\lambda |x-x'|;\]
\item[(d)] 
for all $W\in\mathcal P_{k}$ and $x,x'\in W$, 
\[
	\max_{0 \leq j \leq R_k(W)} |T_{k,k+j-1}(x)- T_{k,k+j-1}(x')|
	\leq  |F_W(x)- F_W(x')|;\]

\item[(e)] 
$\inf_{k\in\mathbb Z}m(R_{k}=2)\geq\delta_{\#}$ and $\inf_{k\in\mathbb Z}m(R_{k}=3)\geq\delta_{\#}$;
\item[(f)] 
 for all $n\geq 1$,
 \[m(R_k\geq n)\leq A \exp( - A'  n ).\]
\end{itemize}
\end{prop}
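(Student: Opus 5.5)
Starting at time $k$, I would follow the symbolic itinerary of a point under $T_{k,k+n-1}$ and record the \emph{center counter} $S_n(x)=\sum_{j=0}^{n-1}\phi_{a_{k+j}}(T_{k,k+j-1}(x))$. Each cylinder of length $n$ (a sequence of symbols in $D$, read from the regions $\Omega_{a_{k+j}}(\cdot)$) is a rectangle. In the $x_u$-direction it is mapped affinely onto $[0,1]$, since $\tau_a$ is full-branch. In the $x_c$-direction the composition is an affine map with slope $M^{S_n}$. The image of a cylinder is therefore $[0,1]\times J$, where $J$ is an interval of length $M^{S_n}$ (or it is all of $[0,1]$). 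In the Dyck-type bookkeeping, $J=[0,1]$ exactly when every $\alpha$-symbol has been cancelled by a later $\beta$, or when the $\beta$-symbols have overflowed. I would define $R_k(x)$ as the first $n\ge2$ at which the cylinder containing $x$ is mapped onto the full square and the center slope satisfies $M^{S_n}\ge\lambda$, with, say, $S_n\ge1$. For $\mathcal P_k$ I take these maximal cylinders, intersected with ${\rm int}(X)$.

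Properties (a)--(c) then hold by construction. The induced map is affine and full. Its expansion is at least $\min\{1/\overline a,\,1/(1-M\underline a)\}^2>1$ in $x_u$ (as $R\ge2$) and at least $M$ in $x_c$, so one may take $\lambda=\min\{M,(1-M\underline a)^{-2}\}$. For (d) I would choose the stopping rule so that $S_j\le S_R$ for all $j\le R$, stopping at the first time the counter reaches a new maximum with full image. Then the $x_c$-separation at every intermediate time is at most the final one. The $x_u$-separation is increasing anyway, and both coordinate factors are monotone.

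For (e), note that a word $\beta\beta$ gives $S_2=2$ and full image. Since $m(\Omega_a(\beta))=1-Ma\ge1/2$ and $\beta$-branches have full $x_u$-image, $m(R_k=2)\ge(1-M\overline a)^2\cdot$(fraction of the center where $\beta$ is full). I would check that $\{\beta_i\beta_j\}$ gives a full image, so the bound is $\ge(1/2)^2$. For $R_k=3$, a word like $\alpha\beta\beta$ that is not stopped at time $2$ works, with measure $\ge M\underline a(1-M\overline a)^2$. The lower bound $\underline a>0$ coming from \eqref{eq:param_range-z} is used here.

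The main step is the tail (f). The symbols $\alpha,\beta$ at successive times are independent under $m$ in the $x_u$ coordinate, with probabilities $Ma_j$ and $1-Ma_j$, and conditional on the $u$-itinerary the $x_c$-coordinate is still Lebesgue. So $S_n$ is a sum of independent $\pm1$ steps with drift at least $1-2M\overline a>0$. The event $R_k\ge n$ requires that the walk has not reached a fresh full-image maximum. I would bound it by the event that the random walk (with a possible reflecting barrier at the top from the overflow in $x_c$) has not produced a ladder time with full image by time $n$. By Hoeffding's inequality, $P(S_j\le \epsilon j)$ decays exponentially. The difficulty is that a full image needs the "stack" of pending $\alpha$'s to be emptied, not merely a positive counter. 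The condition $\underline a>\frac{4M\overline a-1}{4M^2\overline a}$ looks like exactly the large-deviation or Chernoff bound $4\overline a\cdot$stuff$<1$ needed to make this summable uniformly in nonstationary drifts. I expect this to come from a generating-function estimate $\prod_j\bE[e^{-tX_j}]$ with mixed $a_j$. I would first try $t=\log M$ or an optimized $t$ and then check that the admissibility inequality makes the product $<1$.
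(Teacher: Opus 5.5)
With the strict reading of ``new maximum'', your stopping rule is exactly the paper's $R_k$. By Proposition~\ref{character}, the first cutting time is the first passage of the counter to level $1$ on $\Omega(\alpha)$ and to level $2$ on $\Omega(\beta)$. The non-strict reading, which stops at a return to level $1$ on $\Omega(\beta)$, works equally well. Your partition and your checks of (c)--(e) match \eqref{del4}, \eqref{del5} and Lemma~\ref{m-23}.

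\textbf{The gap is in (f).} You leave it open exactly where it matters, but the ``difficulty'' you flag disappears once the image is computed correctly. It is not true that $J$ has length $M^{S_n}$ or equals $[0,1]$: for $\beta_1\alpha_1$ one has $S_2=0$ but $|J|=1/M$. In fact $|J|=M^{S_n-\max_{j\le n}S_j}$, since $\max_{j\le n}S_j-S_n$ is the number of pending $\alpha$'s. Consequently:
\begin{itemize}
\item the image is full exactly when the walk sits at its running maximum;
\item so $R_k$ depends only on the $\pm1$ walk and is at most its first passage time to level $2$;
\item hence $\{R_k>n\}\subset\{S_n\le 1\}$.
\end{itemize}
There is no barrier: overflowing $\beta$'s shrink the cylinder in $x_c$ but do not affect the walk. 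The $\alpha/\beta$ symbols are independent under $m$ (as used in Proposition~\ref{R-fin}), and the drift is at least $1-2M\overline a$. Hoeffding's inequality then gives (f) directly.

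Your guess about where \eqref{eq:param_range-z} enters is also wrong, and this is the real difference from the paper. For $t>0$ the factor $(1-Ma_j)e^{-t}+Ma_je^{t}$ is increasing in $a_j$. So the Chernoff product is at most $\bigl(2\sqrt{M\overline a(1-M\overline a)}\bigr)^n$, which is $<1$ for every $\overline a<\frac{1}{2M}$; no lower bound on $\underline a$ appears.

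In the paper the condition comes from a combinatorial bound instead:
\begin{itemize}
\item Lemma~\ref{path-number} counts roughly $(4M)^{n/2}$ admissible itineraries in the nonstationary Markov diagram, transferred from the stationary count of \cite{T25} via $\Phi$.
\item Lemma~\ref{area} bounds each cylinder's area by the worst case $\overline a^{\#\alpha}(1-M\underline a)^{\#\beta}$.
\item Summability then requires $4M\overline a(1-M\underline a)<1$, which is Lemma~\ref{calculus}.
\end{itemize}
Your probabilistic route, once completed as above, is genuinely different and in fact stronger. Because it uses the exact product weights rather than worst-case ones, it yields (f) under $\overline a<\frac{1}{2M}$ alone. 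The condition $\underline a>0$ is then needed only for $\lambda>1$ in (c) and for $\delta_\#$ in (e). The paper's counting route avoids the probabilistic structure and reuses the stationary Markov-diagram machinery, but pays for the worst-case weights with the admissibility restriction.
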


In order to prove Proposition~\ref{induce-prop}, we start with an arbitrary one-sided sequence of heterochaos baker maps, and construct a partition $\mathcal P$ and an inducing time $R$ that depend on this sequence. We then apply
this construction to each truncated one-sided sequence $(T_{n})_{n\geq k}$, $k\in\mathbb Z$
to obtain $(\mathcal P_k,R_k)_{k\in\mathbb Z}$ with the required properties. 

The construction of $(\mathcal P,R)$ was successfully undertaken in \cite{T25} in the stationary case. 
We slightly modify the construction in \cite{T25} to cover the nonstationary case.
Throughout Sections~\ref{graph-sec-s} to \ref{tail-sec}, we assume
 $(b_k)_{k\geq1}$ is a sequence of parameters in $(0,\frac{1}{2M})$, and set
 \[
\underline b=\inf_{k\geq1}b_k, \  
\overline b=\sup_{k\geq1}b_k.
\]
 With a slight abuse of notation we write  \[T_k=f_{b_k},\ \phi_{k}=\phi_{b_k},\ \Omega_k(\gamma)=\Omega_{b_k}(\gamma)\text{  for }\gamma\in D\text{ and }\gamma\in\{\alpha,\beta\}.\] 
 For compositions of the above heterochaos baker maps in $(T_k)_{k\geq1}$, we keep the notation in \eqref{map-notation}, and set $T_{k,k-1}={\rm id}_{X}$.

 In Section~\ref{graph-sec-s}
 we introduce a graph representation of the dynamics of nonstationary heterochaos baker maps in the spirit of Markov diagram \cite{Hof79}, and provide an upper bound on the number of paths in this diagram with prescribed properties. In Section~\ref{the-stop} we introduce an inducing time $R$ and analyze its properties. In Section~\ref{ind-exp-sec} we construct a partition $\mathcal P$, and in Section~\ref{tail-sec} show that 
 the tail $m(R\geq n)$ decays exponentially in $n$. In Section~\ref{pf-sec} we complete the proof of Proposition~\ref{induce-prop}.

 \subsection{Graph representation of  dynamics}\label{graph-sec-s}


Recall that $D=D(\alpha)\cup D(\beta)$. 
Let $A$ be a non-empty subset of $X$ with $A\subset\Omega_{n}(\gamma)$ for some $n\geq1$ and $\gamma\in D$.
A non-empty subset $B$ of $X$ is called {\it an $n$-successor} of $A$ if there exists $\gamma'\in D$ such that $B=T_{n}(A)\cap{\rm int}(\Omega_{n+1}(\gamma'))$.
We set \[\mathcal V(\alpha)=\{{\rm int}(\Omega_{1}(\alpha_i))\colon 1\leq i\leq M \},\ \mathcal V(\beta)=\{{\rm int}(\Omega_{1}(\beta_i))\colon 1\leq i\leq M \},\]
 and set
 \[\mathcal V_{1}=\mathcal V(\alpha)\cup\mathcal V(\beta).\] 
 Define $\mathcal V_{n}$, $n=2,3,\ldots$ by the recursion formula \[\mathcal V_{n}=  \mathcal V_{n-1}\cup\{B\colon \text{$B$ is an $(n-1)$-successor of an element of $\mathcal V_{n-1}$}\}.\] 
Put $\mathcal V_{0}=\emptyset$ for convenience.
Set \[\mathcal V=\bigcup_{n\geq1}\mathcal V_n.\] 
Let $u,v\in\mathcal V$ satisfy 
$u\in\mathcal V_{k }\setminus \mathcal V_{k-1}$ and 
$v\in\mathcal V_{n}\setminus \mathcal V_{n-1}$ for some $k,n\in\mathbb N$
with $|k-n|\leq 1$.
Let $\gamma\in D$ satisfy $v\subset{\rm int}(\Omega_{n}(\gamma))$.
If 
\[v=T_{k}(u)\cap{\rm int}(\Omega_{n}(\gamma)),\]
then we write $u\to v$. 
For example, if $n\geq2$, $u\in\mathcal V_{n-1}\setminus\mathcal V_{n-2}$ and $v\in\mathcal V_n$ is an $(n-1)$-successor of $u$ then $u\to v$.
We consider the directed graph $(\mathcal V,\to)$.

To each $v\in\mathcal V_{n}\setminus \mathcal V_{n-1}$, $n\in\mathbb N$ uniquely corresponds a string 
$v_1\cdots v_n$ of elements of $\mathcal V$ such that $v_j\in\mathcal V_{j}$, $v_j\to v_{j+1}$ for $1\leq j\leq n-1$ and $v_n=v$. Let $\Pi(v)=\omega_1\cdots\omega_n\in D^{n}$ 
denote the word from $D$ satisfying $v_j\subset\Omega_{j}(\omega_j)$ for $1\leq j\leq n$. 
Let $D^*$ denote the set of finite words from $D$, namely \[D^*=\bigcup_{n\geq1} D^{n}.\]
The map $\Pi\colon \mathcal V\to D^*$
is called an {\it address map}.
It is easy to see that
the address map is injective.

The set $\Pi(\mathcal V)$ does not depend on the parameter sequence $(b_k)_{k\geq1}$.
Indeed,
consider the monoid with $0$, with $2M$ generators in $D$ and the identity element
$1$ with relations 
\[\alpha_i\cdot\beta_j=\delta_{ij},\
0\cdot 0=0\text{ and }\] \[\gamma\cdot 1= 1\cdot\gamma=\gamma,\
 \gamma\cdot 0=0\cdot\gamma=0
\text{ for }\gamma\in \{ 1\}\cup D^*,\]
where $\delta_{ij}$ denotes Kronecker's delta.
Let ${\rm red}\colon D^*\to D^*\cup\{0,1\}$ denote the map
that assigns the reduced word in $D^*$, or $0$ or $1$:
for $n\in\mathbb N$ and $\omega_1\cdots\omega_n\in D^*$,
${\rm red}(\omega_1\cdots\omega_n)$ is the shortest word
in $D^*$, or $0$ or $1$ obtained by applying the above relations repeatedly to $\omega_1\cdots\omega_n$. 
We have
\begin{equation}\label{identity}\Pi(\mathcal V)=\{\omega\in D^*\colon{\rm red}(\omega)\neq0\}.\end{equation}
The right set in \eqref{identity} is the set of finite words that appear in some elements of the Dyck shift \cite[Section~4]{Kri74}.
In the case $\underline{b}=\overline{b}$,  \eqref{identity} follows from the result in \cite{TY23}, and 
the directed graph $(\mathcal V,\to)$ is called the {\it Markov diagram} for  $f_{\underline{b}}$ (see \cite[Section~2.1]{T25}). Part of its structure is depicted in \textsc{Figure}~\ref{diagram}.

\begin{figure} \begin{center}
\includegraphics[height=8cm,width=14.5cm]
{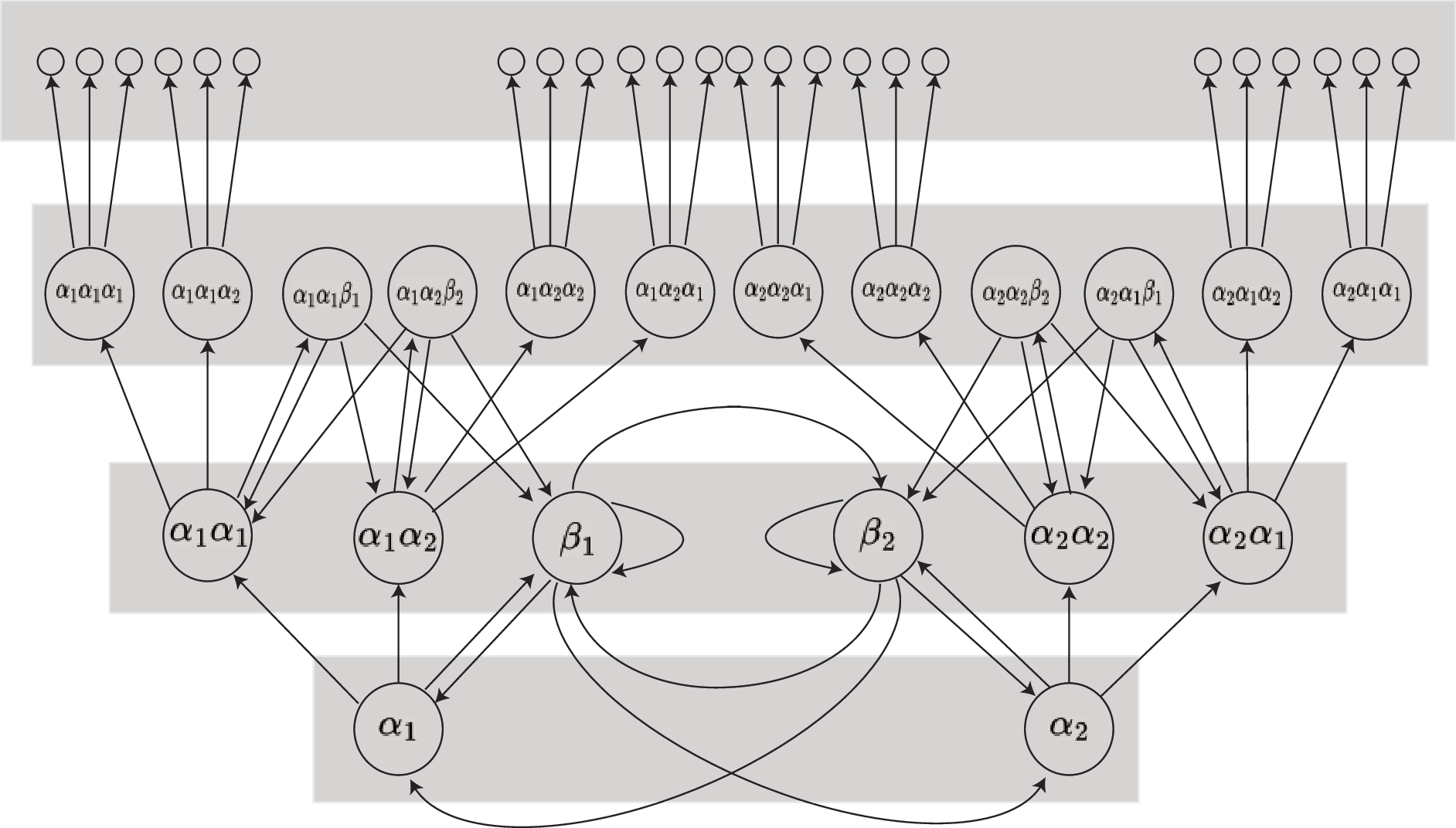}
\caption{Part of the Markov diagram $(\mathcal V,\to)$ for $f_a$, $a\in(0,\frac{1}{M})$ with $M=2$.
The vertices ${\rm int}(\Omega_a(\omega_1))$,  $f_a({\rm int}(\Omega_a(\omega_1 ))\cap{\rm int}(\Omega_a(\omega_2) )$,
$f_a(f_a({\rm int}(\Omega_a(\omega_1 ))\cap {\rm int}(\Omega_a(\omega_2)))\cap {\rm int}(\Omega_a(\omega_3))$ in $\mathcal V$ 
are labeled with $\omega_1$, $\omega_1\omega_2$, $\omega_1\omega_2\omega_3$ respectively.}
\label{diagram}
\end{center}
\end{figure}

Even in the case
$\underline{b}<\overline{b}$,
tracing the argument in \cite{TY23} one can verify \eqref{identity}.
However, then
the directed graph $(\mathcal V,\to)$ is too fine: for example, $\Omega_{1}(\alpha_1)$ and $\Omega_{2}(\alpha_1)$ become different elements of $\mathcal V$ when $b_1\neq b_2$.
To resolve this problem we introduce a quotient graph. 
For each $\omega\in \Pi(\mathcal V)$ there is a decomposition
\[{\rm red}(\omega)=\omega^+\omega^-,\]
where $D(\alpha)^0:=\{1\}$, $D(\beta)^0:=\{1\}$, $\omega^+\in \bigcup_{n\geq0}D(\beta)^n$
and $\omega^-\in \bigcup_{n\geq0}D(\alpha)^n$ (see \cite{HI05}).
Let $\omega^+_r\in\{1\}\cup D(\beta)$ denote the rightmost symbol of the word $\omega^+$.
For $u,v\in\mathcal V$
we set $u\sim v$ if 
$\omega=\Pi(u)$, $\eta=\Pi(v)$,
$\omega^+_r=\eta^+_r$ and
$\omega^-=\eta^-$.
It is easy to check that $\sim$ defines an equivalence relation on $\mathcal V$.
For each $v\in \mathcal V$, let $[v]$ denote the equivalence class of $v$.
For any subset $\mathcal V'$ of $\mathcal V$, set $[\mathcal V']=\{[v]\colon v\in\mathcal V'\}$. 
We write $[u]\to [v]$ if 
there exist $u'\in[u]$ and $v'\in[v]$ such that $u'\to v'$.
The directed graph $([\mathcal V],\to)$ is called a {\it nonstationary Markov diagram} for $(T_{1,n})_{n\geq1}$.

\begin{remark}
In the case $\underline b=\overline b$,
for $u,v\in\mathcal V$ we have $u\sim v$ if and only if $u=v$. In this sense,
the nonstationary Markov diagram is a generalization of the Markov diagram to the nonstationary setting.
\end{remark}

For each $n\in\mathbb N$,
let $P(n)$ denote the set of 
$n$-strings $[v_1]\cdots [v_n]$ of elements of $[\mathcal V]$ such that 
$[v_{j}]\to [v_{j+1}]$ holds for all $1\leq j\leq n-1$, and
there is no
 $j\in\{1,\ldots,n-1\}$ such that
  $[v_{j}]\in[\mathcal V(\beta)]$ and $[v_{j+1}]\in[\mathcal V(\beta)]$. 
 Define a subset $P^*(n)$ of $P(n)$ by
  \[P^*(n)=\begin{cases}\{\![v_1]\cdots [v_{n}]\in P(n)\colon [v_1]\in[\mathcal V(\alpha)]\text{ and }[v_{n}]\in  [\mathcal V_{1}]\setminus [\mathcal V(\alpha)]\}&\!\!\text{if $n$ is odd,}\\\{\![v_1]\cdots [v_{n}]\in P(n)\colon [v_1], [v_{n}]\in[\mathcal V_{1}]\setminus [\mathcal V(\alpha)] \}&\!\!\text{if $n$ is even.}\end{cases}\]
   \begin{lemma}\label{path-number} For all $n\geq3$ we have      
        \[\# P^*(n)\leq
       \frac{2M}{n}\begin{pmatrix}n+1\\\lfloor\frac{n+2}{2}\rfloor\end{pmatrix}M^{\lfloor\frac{n}{2}\rfloor}.\]
   \end{lemma}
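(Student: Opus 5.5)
Each path in $P^*(n)$ will be encoded by a word in the Dyck-type language, and the count will then reduce to a ballot/Catalan-type lattice-path count.

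By construction of $[\mathcal V]$, a class $[v]$ is determined by the pair $(\omega^+_r,\omega^-)$ with $\omega=\Pi(v)$. Along an edge $[v_j]\to[v_{j+1}]$ one symbol $\gamma\in D$ is appended to $\Pi(v_j)$ and the result is reduced; $[v_{j+1}]\in[\mathcal V_1]$ exactly when the new address reduces to a single letter modulo the equivalence. So I would first show that a path $[v_1]\cdots[v_n]$ is determined by $[v_1]$ together with the letters $\gamma_2,\dots,\gamma_n\in D$ appended at each step. The key point is that $\omega^-$, a word in $D(\alpha)$, behaves like a stack. Appending $\alpha_i$ pushes $\alpha_i$. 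Appending $\beta_j$ either pops, which forces $j$ to equal the top letter of the stack, or, if the stack is empty, moves to a $[\mathcal V(\beta)]$-type class. The excluded transitions, two consecutive classes in $[\mathcal V(\beta)]$, forbid appending $\beta$ when the stack is empty and the current class is already of $\beta$-type. Thus after the initial vertex, the process is a walk on stack heights with $\pm1$ steps. An up-step carries $M$ choices of $\alpha_i$; a down-step is forced. There is an extra reset move from height $0$ into a $\beta$-class, followed necessarily by an $\alpha$ step.

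Next I would use the endpoint conditions. A path in $P^*(n)$ starts in $[\mathcal V(\alpha)]$ (height $1$) or in a $\beta$-class (height $0$), according to the parity of $n$. It ends in $[\mathcal V_1]\setminus[\mathcal V(\alpha)]$, i.e.\ at a $\beta$-vertex at height $0$. I would encode the path as a lattice path of length about $n+1$ with steps $\pm1$ that stays nonnegative and returns to $0$. The reset moves should be absorbed either as extra down-steps through an added initial or terminal step, or by multiplying by the factor $2M$ that accounts for the initial and terminal symbol choices. The number of up-steps is at most $\lfloor n/2\rfloor$, which gives the factor $M^{\lfloor n/2\rfloor}$ after bounding each by $M$ choices. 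The number of admissible step patterns is bounded via the cycle lemma or ballot theorem by $\frac{1}{n}\binom{n+1}{\lfloor (n+2)/2\rfloor}$, which is a Catalan-type count for Dyck-like paths of length $n+1$. The parity split in the definition of $P^*(n)$ is arranged so that the total length $n+1$, after including the virtual first step, has the correct parity for a return to height zero.

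The main obstacle will be to verify rigorously that the quotient-graph edges $[u]\to[v]$ behave exactly like this stack automaton, independent of the representatives and of the parameter sequence. This requires checking from \eqref{identity}, and from the decomposition ${\rm red}(\omega)=\omega^+\omega^-$, that $\omega^+_r$ and $\omega^-$ of a successor depend only on those of the predecessor and on the appended letter. The other delicate part is the bookkeeping that maps each path injectively to a ballot path, so that the cycle-lemma bound $\frac1n\binom{n+1}{\lfloor (n+2)/2\rfloor}$ applies. I would first do this for small $n$ ($n=3,4$) to fix the parity conventions, and then state the injection in general.
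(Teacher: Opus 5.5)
Your route differs from the paper's. The paper does no counting: it maps $[\mathcal V]$ into the Markov diagram of a single $f_a$ via $\Pi_a(\Phi([v]))=\omega^+_r\omega^-$ and quotes \cite[Lemma~2.9]{T25}. A direct enumeration in the quotient graph is a legitimate alternative and would make the lemma self-contained. It also sidesteps $\Phi$, which as written is undefined on classes where $\omega^+_r\omega^-$ is empty, e.g.\ for $\Pi(v)=\alpha_i\beta_i$.

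As written, though, your proposal has a gap exactly where the bound is decided: nothing controls the number of reset moves. Each reset carries $M$ choices and a flat step with no compensating down-step. If resets could recur, blocks reset--up--down would give $M^2$ choices per three steps, so the count would grow like $M^{2n/3}$, which is incompatible with $M^{\lfloor n/2\rfloor}$ for large $M$. Neither of your ways of absorbing resets (an added initial or terminal step, or the prefactor $2M$) can handle more than one.

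The missing idea is to use the first coordinate of the class. Pushes and pops preserve $\omega^+_r$, and a reset makes it a $\beta$-letter. So after the first reset, every return to the empty stack is a class with $\omega^+_r=\beta_k$ and $\omega^-=1$, i.e.\ an element of $[\mathcal V(\beta)]$, from which a further $\beta$ is forbidden. Hence there is at most one reset, and none on paths starting in $[\mathcal V(\beta)]$. This settles both parities:
\begin{itemize}
\item For even $n$ you get $P^*(n)=\emptyset$, since $n-1$ steps of $\pm1$ cannot lead from height $0$ back to $0$. So the parity split is not ``arranged for a return'' as you suggest.
\item For odd $n$ exactly one reset occurs, since it is needed to end with $\omega^+_r\neq 1$. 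There are then $(n-3)/2$ up-steps, and the symbol choices number exactly $M\cdot M\cdot M^{(n-3)/2}=M\cdot M^{\lfloor n/2\rfloor}$.
\end{itemize}

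To count step patterns for odd $n$, prepend an up-step, replace the reset by an up-step, and append a down-step. This maps patterns injectively to Dyck paths of length $n+1$, since the reset marks the start of the last excursion. With $C_m=\frac{1}{m+1}\binom{2m}{m}$, there are therefore at most $C_{(n+1)/2}=\frac{2}{n+3}\binom{n+1}{(n+1)/2}\le\frac{2}{n}\binom{n+1}{\lfloor\frac{n+2}{2}\rfloor}$ patterns, which proves the lemma.

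Note that the factor $2$ must be spent on the patterns, not on the symbols as in your split. The exact number of patterns is $C_{(n+1)/2}-C_{(n-1)/2}$, which exceeds $\frac1n\binom{n+1}{(n+1)/2}$ for all odd $n\ge 11$ (e.g.\ $90>84$ at $n=11$).

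Finally, the identification of $\to$ with letter transitions, which you rightly flag, cannot be proved verbatim. The clause $|k-n|\le 1$ also yields edges such as $[{\rm int}(\Omega_1(\beta_k))]\to[{\rm int}(\Omega_1(\alpha_l))]$ (take $k=n=1$). These reset $\omega^+_r$ to $1$ and would defeat the argument above. The count should be made in the letter-transition graph, which is all that the proof of Proposition~\ref{tail-eq0} uses.
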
\begin{proof}
Fix a reference parameter $a\in(0,\frac{1}{M})$, and let $(\mathcal V_{a},\to)$ be the Markov diagram for $f_{a}$ with the address map $\Pi_a\colon \mathcal V_a\to D^*$.
Define a map $\Phi\colon[\mathcal V]\to\mathcal V_{a}$ by
\[\Pi_a(\Phi([v]))=\omega^+_r\omega^-\ \text{ for }
v\in\mathcal V,\ \omega=\Pi(v).\]
Then $\Phi$ is a well-defined injection.
Moreover, $[u],[v]\in[\mathcal V]$, $[u]\to [v]$ implies $\Phi([u])\to \Phi([v])$.
   Meanwhile, 
   the desired upper bound in Lemma~\ref{path-number} was established in \cite[Lemma~2.9]{T25} when
   $\underline{b}=\overline{b}$. By virtue of the above isomorphism $\Phi$, it remains valid when $\underline{b}<\overline{b}$. 
\end{proof}

\textcolor{red}{
}


\subsection{The inducing time}\label{the-stop}
By a {\it rectangle}
we mean a product of two 
non-degenerate intervals in $[0,1]$.
For a rectangle $B=B_u\times B_c\subset X$ we write \[
|B|_u=|B_u|,\  |B|_c=|B_c|.\]
Here, $|I|$ denotes the Euclidean length of an interval $I$.
For $x\in X$ and $n\in\mathbb N$, 
let $K_{n}(x)$ denote the maximal rectangle containing $x$ on which $T_{1,n}$ is affine.
For example, we have
 $|K_{1}(x)|_c=1$ if $x\in\Omega_{1}(\alpha)$ and
 $|K_{1}(x)|_c=\frac{1}{M}$ if $x\in\Omega_{1}(\beta)$.
We say an integer $n\geq2$ is a  {\it cutting time} of $x$
if 
\[\frac{|K_{n}(x)|_c}{|K_{n-1}(x)|_c}=\frac{1}{M}.\]
We define an {\it inducing time} $R\colon X\to[2,\infty]$ by
  \begin{equation}\label{afnew-eq} R(x)=\inf\{n\geq2\colon \text{$n$ is a cutting time of $x$}\}.\end{equation}

For $n\in\mathbb N$
define
\[\phi_{1,n}=\sum_{j=0}^{n-1}\phi_{j+1}\circ T_{1,j}.\] 
For $x\in X$ we have 
 \[\phi_{1,n}(x)=n-2\cdot\#\{0\leq j\leq n-1\colon T_{1,j}(x)\in\Omega_{j+1}(\alpha)\}.\]
 In other words, $(n-\phi_{1,n})/2$ counts the number of returns to the regions where the central direction is contracting. 
 Put $\phi_{1,0}\equiv 0$ for convenience.
  
For each $k\geq1$, put
   \[E_{k}=\Omega_{k}(\beta)\cap T_{k}^{-1}(\Omega_{k+1}(\beta)).\]
  The next proposition represents the inducing time in terms of return times to the sets $E_k$, $k\geq1$.
 \begin{prop}
\label{character}\
\begin{itemize}
\item[(a)] If $x\in\Omega_{1}(\alpha)$ then
\[R(x)=\inf\{n\geq 0
\colon \phi_{1,n}(x)=-1\text{ and }T_{1,n}(x)\in E_{n+1} \}+2.\]
\item[(b)] If $x\in\Omega_{1}(\beta)$ then
\[R(x)=\inf\{n\geq 0
\colon \phi_{1,n}(x)=0\text{ and }T_{1,n}(x)\in E_{n+1} \}+2.\]
\end{itemize}
\end{prop}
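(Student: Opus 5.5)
The plan is to reduce the cutting-time condition to a one-dimensional bookkeeping of the central size of the image rectangle, and then read off the first cutting time as a first passage of the Birkhoff sum $\phi_{1,n}$.

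\emph{Step 1: a recursion for the central size.} For $j\geq0$ let $J_j(x)$ be the $x_c$-projection of the rectangle $T_{1,j}(K_j(x))$, with $K_0(x)=X$. I will write $|J_j(x)|=M^{h_j(x)}$, with $h_j(x)\in\mathbb Z$ and $h_0=0$.

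By construction, $K_{j+1}(x)=K_j(x)\cap T_{1,j}^{-1}(\Omega_{j+1}(\gamma))$, where $\gamma\in D$ is the symbol of $T_{1,j}(x)$. Inspecting the regions $\Omega_{j+1}(\gamma)$ gives:
\begin{itemize}
\item The sets $\Omega_{j+1}(\alpha_i)$ are full in the $x_c$-direction. So if $T_{1,j}(x)\in\Omega_{j+1}(\alpha)$, there is no central cut, and $T_{j+1}$ contracts by $1/M$. Thus $|K_{j+1}|_c=|K_j|_c$ and $h_{j+1}=h_j-1$.
\item If $T_{1,j}(x)\in\Omega_{j+1}(\beta)$ and $h_j=0$, then $J_j=[0,1]$ is cut into $M$ pieces. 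Hence $|K_{j+1}|_c/|K_j|_c=1/M$, and after the expansion by $M$ we get $h_{j+1}=0$.
\item If $T_{1,j}(x)\in\Omega_{j+1}(\beta)$ and $h_j<0$, I will check by induction that $J_j$ is an $M$-adic interval of length $M^{h_j}$. It therefore lies inside one strip $[\frac{i-1}{M},\frac iM]$, so there is no cut and $h_{j+1}=h_j+1$.
\end{itemize}
Consequently $h_j\leq0$ for all $j$, and $n\geq2$ is a cutting time if and only if $h_{n-1}(x)=0$ and $T_{1,n-1}(x)\in\Omega_n(\beta)$. The only point needing care is the $M$-adic structure of $J_j$. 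It follows because both branches act on $x_c$ by maps of the form $x_c\mapsto x_c/M+(i-1)/M$ or $x_c\mapsto Mx_c-i+1$, which send $M$-adic intervals to $M$-adic intervals. This is independent of the parameters $b_k$, which is why the nonstationarity does no harm here.

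\emph{Step 2: relating $h_j$ to $\phi_{1,j}$ before the first cut after time $1$.} Between cuts, $h$ moves by $\phi_{j+1}(T_{1,j}x)=\pm1$, exactly as $\phi_{1,j}$ does.
\begin{itemize}
\item In case (a), $h_1=-1=\phi_{1,1}(x)$, so $h_j=\phi_{1,j}(x)$ for $1\leq j\leq R(x)-1$.
\item In case (b), the step at time $1$ is a cut (it is not counted, since $n\geq2$), $h_1=0$ and $\phi_{1,1}=1$. So $h_j=\phi_{1,j}(x)-1$ for $1\leq j\leq R(x)-1$.
\end{itemize}
Now I characterize the first cutting time $n\geq2$. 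Suppose $n\geq3$. Then $h_{n-1}=0$ cannot come from $h_{n-2}=0$ followed by a $\beta$-step, since that would be an earlier cut at time $n-1\geq2$. It cannot come from an $\alpha$-step either, since that lowers $h$. Hence $h_{n-2}=-1$ and $T_{1,n-2}(x)\in\Omega_{n-1}(\beta)$. Together with $T_{1,n-1}(x)\in\Omega_n(\beta)$, this says exactly that $T_{1,n-2}(x)\in E_{n-1}$ and $h_{n-2}=-1$. Conversely, these two conditions force a cut at time $n$.

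Setting $n'=n-2$, the condition $h_{n'}=-1$ reads $\phi_{1,n'}=-1$ in case (a) and $\phi_{1,n'}=0$ in case (b). This gives the stated formulas, with minimality of $R$ corresponding to the infimum.

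\emph{Step 3: boundary cases.} The main (mild) obstacle is the small-index bookkeeping. If $n=2$ in case (b), the condition is $x\in E_1$, which matches $n'=0$ with $\phi_{1,0}=0$. In case (a), $n=2$ is impossible since $h_1=-1$, and $n'=0$ is excluded because $x\notin E_1$. I would also verify that $h_{n'}=-1$ with $n'\geq1$ is consistent with the "$h_{n-2}=-1$" derivation in both cases. Finally, if no such $n'$ exists, both sides equal $\infty$.
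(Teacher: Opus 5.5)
Your proof is correct, but it is organized differently from the paper's.

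\textbf{The paper's route.} It first proves Lemma~\ref{chara-lem}: for $x\in\Omega_1(\beta)$, an integer $n\ge2$ is a cutting time if and only if $T_{1,n-2}(x)\in E_{n-1}$ and $\phi_{1,n-2}(x)\ge0$. It then treats $x\in E_1$ and $x\in\Omega_1(\beta)\setminus E_1$ separately. In the second case the key claim $\phi_{1,R(x)-2}(x)=0$ is justified by appeal to ``the structure of the nonstationary Markov diagram''. Part (a) is not computed directly. The paper prepends an auxiliary map $T_0=f_{b_0}$ and a preimage $y\in\Omega_0(\beta)$ of $x$, so that $R(x)=R(y)-1$ reduces (a) to (b) for the shifted sequence.

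\textbf{Your route.} You make the central bookkeeping explicit. The $M$-adic invariance gives $h_{j+1}=\min\{h_j+\phi_{j+1}(T_{1,j}x),0\}$, i.e.\ $h_j=\phi_{1,j}-\max_{0\le i\le j}\phi_{1,i}$. The first cutting time then becomes a first-passage time. Both parts follow from one computation with initial value $h_1=-1$ or $h_1=0$, so no auxiliary map is needed.

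\textbf{What each buys.} The paper's version is shorter and reuses the diagram language it needs anyway for Lemma~\ref{path-number}. Yours is self-contained and actually proves the step the paper delegates to the diagram. It also identifies every cutting time: $n\ge2$ is one exactly when $h_{n-1}=0$ and $T_{1,n-1}(x)\in\Omega_n(\beta)$.

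This shows that the general criterion in Lemma~\ref{chara-lem} should read $\phi_{1,n-2}\ge\max_{0\le i\le n-2}\phi_{1,i}-1$. That reduces to $\phi_{1,n-2}\ge0$ only while the running maximum is at most $1$, in particular for $n\le R(x)+1$. For example, take a point whose successive regions are of types $\beta,\beta,\beta,\beta,\alpha,\alpha,\alpha,\beta,\beta$ and $n=9$. It satisfies the lemma's condition, yet $9$ is not a cutting time. This is harmless for the proposition, where only $n\le R(x)$ matters, but your formulation needs no such restriction.
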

For a proof of this proposition we need the next lemma.
\begin{lemma}\label{chara-lem}
Let $x\in\Omega_{1}(\beta)$. An integer $n\geq2$ is a cutting time of $x$
if and only if 
$T_{1,n-2}(x)\in E_{n-1}$
and $\phi_{1,n-2}(x)\geq0$.
\end{lemma}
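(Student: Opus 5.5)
The plan is to track the central length of the image of the affine rectangle $K_j(x)$ along the orbit, and to reduce ``cutting time'' to a condition on the Birkhoff sums $\phi_{1,j}(x)$. Write $x_j=T_{1,j}(x)$ and $K_0(x)=X$. Let $I_j=T_{1,j}(K_j(x))$, a rectangle, and let $h_j=|I_j|_c$, so that $h_0=1$.

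\textbf{Step 1 (structure of $I_j$).} By induction on $j$ I will show that the $c$-side of $I_j$ is either $[0,1]$ or an $M$-adic interval $[\frac{i}{M^k},\frac{i+1}{M^k})$ with $k\ge1$. Thus $h_j=M^{-d_j}$ for an integer $d_j\ge0$. The induction step uses only the two affine branches of $T_{j+1}$ in the $c$-coordinate. An $\alpha$-branch sends the $c$-side into $[\frac{i-1}{M},\frac iM)$ by $x_c\mapsto x_c/M+(i-1)/M$, which preserves $M$-adicity. A $\beta$-branch acts by $x_c\mapsto Mx_c-i+1$ on the strip $[\frac{i-1}M,\frac iM)$, and an $M$-adic interval of length at most $\frac1M$ lies in a single such strip.

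\textbf{Step 2 (cutting criterion).} Cuts in the $u$-direction (at the points $(i-1)b_n$ and $Mb_n$) do not change $c$-lengths. Hence $|K_n|_c<|K_{n-1}|_c$ holds exactly when $I_{n-1}$ meets two distinct $\beta$-strips of $\Omega_n(\beta)$. By Step~1 this happens iff $x_{n-1}\in\Omega_n(\beta)$ and $d_{n-1}=0$, and in that case the ratio is exactly $\frac1M$. Moreover $d_j$ evolves as a reflected walk: $d_{j+1}=d_j+1$ if $x_j\in\Omega_{j+1}(\alpha)$, and $d_{j+1}=\max\{d_j-1,0\}$ if $x_j\in\Omega_{j+1}(\beta)$. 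By the Skorokhod representation,
\[
d_j=\max_{0\le i\le j}\phi_{1,i}(x)-\phi_{1,j}(x).
\]
So $n$ is a cutting time iff $x_{n-1}\in\Omega_n(\beta)$ and $\phi_{1,n-1}(x)=\max_{i\le n-1}\phi_{1,i}(x)$.

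\textbf{Step 3 (matching with the lemma).} Let $x\in\Omega_1(\beta)$, so $\phi_{1,1}(x)=1$ and $d_1=0$.

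``Only if'': suppose $d_{n-1}=0$ and $x_{n-1}\in\Omega_n(\beta)$. If $n=2$, the condition reads $x\in E_1$ with $\phi_{1,0}=0$. If $n\ge3$, then $d_{n-1}=0$ forces the step at $x_{n-2}$ to be a $\beta$-step, because an $\alpha$-step gives $d\ge1$. Hence $x_{n-2}\in E_{n-1}$. Also $\phi_{1,n-2}\ge\phi_{1,n-1}-1=\max_{i\le n-1}\phi_{1,i}-1\ge\phi_{1,1}-1=0$.

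``If'': from $x_{n-2}\in E_{n-1}$ and $\phi_{1,n-2}\ge0$ we need $d_{n-1}=0$, that is, that $\phi_{1,n-1}=\phi_{1,n-2}+1$ is a running maximum.

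\textbf{Main obstacle.} The ``if'' direction is where I expect the difficulty. It is immediate whenever the running maximum of $\phi_{1,i}$ over $i\le n-2$ is at most $\phi_{1,n-2}+1$. This covers all $n$ up to and including the first cutting time, because before the first cut $d_j$ can only have grown from $0$ along paths with $\phi\le0$ after the initial $\beta$-step. This is the case that Proposition~\ref{character} uses.

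For general $n$ I would first test the word $\beta\beta\beta\beta\alpha\alpha\beta\beta$ against Step~2. For this word $\phi_{1,6}=2\ge0$ and $x_6\in E_7$, but $d_7=1$, which would make $n=8$ \emph{not} a cutting time. I will therefore check this example against the definition of $K_n$ very carefully. If it survives, the ``if'' direction has to be read with $\phi_{1,n-2}$ measured from the last cutting time (equivalently, with $d_{n-2}\le1$). In that case Step~3 goes through verbatim for that reading.
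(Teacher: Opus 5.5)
Your analysis is correct, and the example you planned to test does survive: the ``if'' direction of the lemma is false as stated. A shorter instance is any $x$ whose type-itinerary begins $\beta\beta\alpha\alpha\beta\beta$, with $n=6$. This itinerary depends only on $x_u$, so these $x$ form a set of positive measure in $\Omega_1(\beta)$. For such $x$ we have $T_{1,4}(x)\in E_5$ and $\phi_{1,4}(x)=0$, yet $d_0,\dots,d_6=0,0,0,1,2,1,0$. Directly from the definition, $|K_j(x)|_c=M^{-2}$ for $2\le j\le 6$. The two $\alpha$-steps impose no central restriction. The two following $\beta$-steps see $T_{1,4}(K_4(x))$ and $T_{1,5}(K_5(x))$, whose central sides have lengths $M^{-2}$ and $M^{-1}$ and each lie in a single strip. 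Hence $|K_6(x)|_c/|K_5(x)|_c=1$. Likewise, in your example $|K_j(x)|_c=M^{-4}$ for $4\le j\le 8$.

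The paper's proof fails at the assertion that $\phi_{1,n-2}(x)\ge0$ implies $|T_{1,n-2}(B)|_c=\frac1M$ for the affine pullback $B\ni x$ of $\Omega_{n-1}(\beta_i)$. Since $B\subset K_{n-2}(x)$, one has $|T_{1,n-2}(B)|_c=\min\{M^{-d_{n-2}},\frac1M\}$. So that equality holds iff $d_{n-2}\le1$, i.e.\ iff $\phi_{1,n-2}(x)\ge\max_{0\le i\le n-2}\phi_{1,i}(x)-1$. This is strictly stronger than $\phi_{1,n-2}(x)\ge0$ once the running maximum exceeds $1$ (above, $d_4=2$).

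The correct criterion is therefore your Step~2: for $n\ge2$, $n$ is a cutting time iff $T_{1,n-2}(x)\in E_{n-1}$ and $d_{n-2}\le1$. The ``only if'' half of the lemma is true, and your argument for it is the paper's argument made explicit. The ``if'' half holds for $2\le n\le R(x)$, because $\phi_{1,j}(x)\le1$ for $j\le R(x)-1$ when $x\in\Omega_1(\beta)$. That restricted form is all the proof of Proposition~\ref{character} uses. Both the contradiction giving $\phi_{1,R(x)-2}(x)=0$ and the reverse inequality apply the ``if'' part only at times below $R(x)$, so the downstream results are unaffected.

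One wording fix in your Step~2: the central length drops at time $n$ iff $x_{n-1}\in\Omega_n(\beta)$ and $d_{n-1}=0$, not merely when $I_{n-1}$ meets two $\beta$-strips.
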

\begin{proof}
To show the `if' part, suppose
   $T_{1,n-2}(x)\in E_{n-1 }$
and $\phi_{1,n-2}(x)\geq0$.
There exists $i\in\{1,\ldots,M\}$ such that
$T_{1,n-2}(x)\in\Omega_{n-1}(\beta_i)$.
Let $B$ denote the connected component of $T^{-1}_{1,n-2}(\Omega_{n-1}(\beta_i))$ that contains $x$.
Then $B$ is a rectangle, $T_{1,n-2}|_B$ is affine, and  condition $\phi_{1,n-2}\geq0$ implies $|T_{1,n-2}(B)|_c=\frac{1}{M}$. Hence  $n$ is a cutting time of $x$.

To show the `only if' part, let $n\geq2$ be a cutting time of $x$. Then we have
      $T_{1,n-2}(x)\in E_{n-1}$. 
    Since $x\in\Omega_{1}(\beta)$, 
    it follows that $\phi_{1,n-2}(x)\geq0$.
\end{proof}
\if0\begin{lemma}\label{chara-lem}
Let $x\in\Omega^+_{k}(\beta)$ and let $n\geq2$ be an integer.
Then $n$ is a $k$-cutting time of $x$
if and only if 
$f_{k,k+n-3}(x)\in E_{k+n-2}$
and $\phi_{k,k+n-3}(x)\geq0$.
\end{lemma}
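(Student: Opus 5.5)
The plan is to reduce the statement to Lemma~\ref{chara-lem} by shifting time, and to check that every object in the statement transforms consistently under the shift. Given $k$, I will consider the one-sided sequence $(b'_j)_{j\geq1}$ with $b'_j=b_{k+j-1}$. Write $T'_j=f_{b'_j}$, so that $T'_{1,j}=T_{k,k+j-1}$, $\Omega'_j(\gamma)=\Omega_{k+j-1}(\gamma)$ and $E'_j=E_{k+j-1}$.

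Next I will check the correspondences between the shifted objects and the original ones. The maximal affinity rectangle $K'_n(x)$ for $T'_{1,n}$ is exactly the rectangle used to define a $k$-cutting time. Consequently, $n$ is a $k$-cutting time of $x$ if and only if $n$ is a cutting time of $x$ for the primed sequence. For the Birkhoff sums, $\phi'_{1,n}=\sum_{j=0}^{n-1}\phi_{k+j}\circ T_{k,k+j-1}$, which is $\phi_{k,k+n-1}$ in the shifted notation. With these identifications, the condition $T'_{1,n-2}(x)\in E'_{n-1}$ reads $T_{k,k+n-3}(x)\in E_{k+n-2}$. Likewise, $\phi'_{1,n-2}(x)\geq0$ reads $\phi_{k,k+n-3}(x)\geq 0$. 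Since Lemma~\ref{chara-lem} was proved for an arbitrary sequence of parameters in $(0,\frac{1}{2M})$, applying it to $(b'_j)$ gives the claim.

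If a self-contained argument is preferred, I will instead repeat the proof of Lemma~\ref{chara-lem} verbatim. For the `if' direction, take $i$ with $T_{k,k+n-3}(x)\in\Omega_{k+n-2}(\beta_i)$. Let $B$ be the connected component of $T_{k,k+n-3}^{-1}(\Omega_{k+n-2}(\beta_i))$ containing $x$. Then $B$ is a rectangle on which $T_{k,k+n-3}$ is affine, with central derivative $M^{\phi_{k,k+n-3}}$. The assumption $\phi_{k,k+n-3}(x)\geq0$, together with $x\in\Omega_k(\beta)$, forces the central extent of $T_{k,k+n-3}(B)$ to be $\frac1M$. The further requirement that the image lands in $\Omega_{k+n-1}(\beta)$ (definition of $E_{k+n-2}$) then cuts the central side by the factor $\frac1M$ at step $n$.

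The main obstacle is this bookkeeping step: showing that the central length of the image is exactly $\frac1M$, neither smaller nor larger. It requires that, as long as the running sum stays nonnegative, no earlier $\alpha$-contraction leaves a central gap. For the `only if' direction, I will argue from the ratio $\frac1M$ in the definition of a cutting time: such a ratio can only come from a cut inside a $\beta$-region following a $\beta$-step, which forces $T_{k,k+n-3}(x)\in E_{k+n-2}$. Nonnegativity of $\phi_{k,k+n-3}(x)$ then follows because $x$ starts in $\Omega_k(\beta)$, exactly as in the unshifted case.
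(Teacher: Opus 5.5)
\textbf{Gap in the `if' direction.} Your time-shift identifications are correct, and your direct argument is the paper's own. However, both rest on a step that is false as stated: that $\phi_{k,k+n-3}(x)\ge0$ and $x\in\Omega_k(\beta)$ force $|T_{k,k+n-3}(B)|_c=\frac1M$.

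Write $K'_j(x)$ for the maximal rectangle containing $x$ on which $T_{k,k+j-1}$ is affine, with $K'_0(x)=X$. Define $e_j$ by $|T_{k,k+j-1}(K'_j(x))|_c=M^{e_j}$. An $\alpha$-step lowers $e$ by one. A $\beta$-step raises $e$ by one if $e\le-1$; if $e=0$ it produces a cut instead, and $e$ stays $0$. Hence
\[
e_j=\phi_{k,k+j-1}(x)-\max_{0\le i\le j}\phi_{k,k+i-1}(x),\qquad \phi_{k,k-1}:=0,
\]
and $n$ is a $k$-cutting time iff the steps at times $k+n-2$ and $k+n-1$ are $\beta$-steps and $e_{n-2}\in\{0,-1\}$.

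So what matters is $\phi$ minus its running maximum, not $\phi$ itself. Your heuristic that a nonnegative running sum leaves no central gap is wrong, because $\beta$-steps spent on cuts bank no expansion. For instance, let $x$ have itinerary $\beta_{j_1}\beta_{j_2}\alpha_{i_3}\alpha_{i_4}\beta_{i_4}\beta_{i_3}$ at times $k,\ldots,k+5$; this is a set of positive measure. Then $T_{k,k+3}(x)\in E_{k+4}$ and $\phi_{k,k+3}(x)=0$. But $|K'_j(x)|_c=M^{-1},M^{-2},M^{-2},M^{-2},M^{-2},M^{-2}$ for $j=1,\ldots,6$. The image at time $k+3$ has central side $M^{-2}$, the two $\beta$-steps only undo the two $\alpha$-steps, and $n=6$ is not a $k$-cutting time.

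Thus the `if' direction fails beyond the first $k$-cutting time. Reducing to the case $k=1$ does not help, since that lemma has the same form and admits the same counterexample. The paper's proof also asserts $|T_{k,k+n-3}(B)|_c=\frac1M$ without further justification.

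\textbf{What your argument does prove.} $n\ge2$ is a $k$-cutting time iff $T_{k,k+n-3}(x)\in E_{k+n-2}$ and $\phi_{k,k+n-3}(x)\ge\max_{0\le j\le n-2}\phi_{k,k+j-1}(x)-1$. Your `only if' sketch is fine, since for $n\ge3$ that maximum is at least $\phi_{k,k}(x)=1$.

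If $n$ does not exceed the first $k$-cutting time of $x$, the maximum is at most $1$, because the first time $\phi$ reaches $2$ is itself a $k$-cutting time. The condition then becomes $\phi_{k,k+n-3}(x)\ge0$. This restricted form is all that Proposition~\ref{character} uses, so you should add that hypothesis.

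Also take $B=K'_{n-1}(x)$ rather than the connected component of $T_{k,k+n-3}^{-1}(\Omega_{k+n-2}(\beta_i))$ containing $x$. For example, with $n=4$ and an itinerary beginning $\beta_j\alpha_i\beta_i$, the pieces with different $j$ have the same image under $T_{k,k+1}$. Together they form a rectangle of central side $1$ inside that preimage, across which $T_{k,k+1}$ is discontinuous, so that component is not an affinity domain.
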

\begin{proof}
To show the `if' part, suppose
    $f_{k,k+n-3}(x)\in E_{k+n-2}$
and $\phi_{k,k+n-3}(x)\geq0$.
There exists $i\in\{1,\ldots,M\}$ such that
$f_{k,k+n-3}(x)\in\Omega_{k+n-2}^+(\beta_i)$.
Let $B$ denote the connected component of $f^{-1}_{k,k+n-3}(\Omega_{k+n-2}^+(\beta_i))$ that contains $x$.
Then $B$ is a rectangle, $f_{k,k+n-3}|_B$ is affine and $|f_{k,k+n-3}(B)|_c=\frac{1}{M}$. Hence  $n$ is a $k$-cutting time of $x$.
To show the `only if' part, let $n\geq2$ be a $k$-cutting time of $x$. The definition of $k$-cutting time implies
      $f_{k,k+n-3}(x)\in E_{k+n-2}$. 
    Since $x\in\Omega^+_{k}(\beta)$, applying
     by \eqref{interpret-eq2} to the path corresponding to ther nonstationary orbit $x,f_{k,k}(x),\ldots,f_{k,k+n-3}(x)$ we obtain $\phi_{k,k+n-3}(x)\geq0$.
\end{proof}\fi
\begin{proof}[Proof of Proposition~\ref{character}]
We treat three cases separately.
\smallskip

\noindent{\it Case~1: $x\in E_1\subset\Omega_1(\beta)$.}
Then $R(x)=2$ holds. Since $\phi_{1,0}\equiv0$ and 
$T_{1,0}(x)=x\in E_1$, we obtain the equality in (b) with $n=0$.
\smallskip

\noindent{\it Case~2: $x\in \Omega_{1}(\beta)\setminus E_{1}$.}
Then $R(x)\geq3$ holds.
We claim $\phi_{1,R(x)-2}(x)=0$, for otherwise 
the structure of the nonstationary Markov diagram described in Section~\ref{graph-sec-s} would imply
 $\phi_{1,R(x)-2}(x)>0$, and 
there would be $i\in\{2,\ldots,R(x)-1\}$ such that $T_{1,i-2}(x)\in 
E_{i-1}$ and $\phi_{1,i-2}(x)=0$. From Lemma~\ref{chara-lem}, $i$ would be a cutting time of $x$, a contradiction to the minimality in the definition of $R(x)$. This claim yields
\[R(x)\geq\inf\{n\geq 0
\colon \phi_{1,n}(x)=0\text{ and }T_{1,n}(x)\in E_{n+1} \}+2.\]
The reverse inequality follows from Lemma~\ref{chara-lem} and the minimality of 
$R$. 
We have established (b).\smallskip

\noindent{\it Case~3:
$x\in\Omega_1(\alpha)$.} 
Pick a parameter $b_0\in(0,\frac{1}{M})$
and set $T_0=f_{b_0}$, $\Omega_0(\beta)=\Omega_{b_0}(\beta)$.
There exists $y\in\Omega_{0}(\beta)\setminus 
(\Omega_{0}(\beta)\cap T_{0}^{-1}(\Omega_{1}(\beta)))$
such that $T_0(y)=x$.
The reasoning in Case~2 applied to $y$ yields
\[R(y)=\inf\{n\geq 0
\colon \phi_{1,n}(T_0(y))+\phi_{b_0}(y)=0\text{ and }T_{1,n}(T_0(y))\in E_{n+1} \}+2.\]
Hence we obtain
\[R(x)=R(y)-1=\inf\{n\geq 0
\colon \phi_{1,n}(x)=-1\text{ and }T_{1,n}(x)\in E_{n+1} \}+2,\]
as required.
\end{proof}

 \begin{lemma}\label{m-23}
We have \[m(R=2)=(1-Mb_1)(1-Mb_{2})\ \text{ and}\]
\[m(R=3)=Mb_1(1-Mb_2)(1-Mb_{3}).\]
\end{lemma}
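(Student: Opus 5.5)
We compute both probabilities directly from Proposition~\ref{character}, reading off which sets $\{R=2\}$ and $\{R=3\}$ are and then using that each $T_k$ preserves $m$ and acts as a skew product over $\tau_{b_k}$.

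\emph{The event $\{R=2\}$.} By Proposition~\ref{character}, $R(x)=2$ exactly when the infimum is attained at $n=0$. For $x\in\Omega_1(\alpha)$ this would need $\phi_{1,0}(x)=-1$, which is impossible since $\phi_{1,0}\equiv0$. For $x\in\Omega_1(\beta)$ the condition $\phi_{1,0}=0$ holds automatically, so the condition is just $x\in E_1$. Hence $\{R=2\}=E_1=\Omega_1(\beta)\cap T_1^{-1}(\Omega_2(\beta))$.

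Both $\Omega_1(\beta)$ and $T_1^{-1}(\Omega_2(\beta))$ are determined by the $x_u$-coordinate: the first is $[Mb_1,1]\times[0,1]$, and the second is $\{\tau_{b_1}(x_u)\ge Mb_2\}$. On $[Mb_1,1]$ the map $\tau_{b_1}$ is affine onto $[0,1]$, so the relative length of this preimage inside $[Mb_1,1]$ is $1-Mb_2$. This gives
\[
m(R=2)=(1-Mb_1)(1-Mb_2).
\]

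\emph{The event $\{R=3\}$.} Now the infimum must be attained at $n=1$, so neither $x\in E_1$ nor $n=0$ qualifies. If $x\in\Omega_1(\beta)$, we would need $\phi_{1,1}(x)=\phi_1(x)=1$ to equal $0$, which is impossible. If $x\in\Omega_1(\alpha)$, the $n=0$ case already fails, and $n=1$ requires $\phi_{1,1}(x)=-1$, which holds, together with $T_1(x)\in E_2$. Hence
\[
\{R=3\}=\Omega_1(\alpha)\cap T_1^{-1}(E_2)=\Omega_1(\alpha)\cap T_1^{-1}\Omega_2(\beta)\cap T_{1,2}^{-1}\Omega_3(\beta).
\]
All three conditions depend only on $x_u$, through the composition of the interval maps $\tau_{b_k}$.

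We compute the measure by conditioning branch by branch. Each branch of $\tau_{b_1}$ on $\Omega_1(\alpha_i)$ is affine onto $[0,1]$, and $\tau_{b_2}$ restricted to $[Mb_2,1]$ is affine onto $[0,1]$. The measure is therefore the product
\[
m(R=3)=Mb_1\cdot(1-Mb_2)\cdot(1-Mb_3).
\]

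The only step needing care is the case analysis excluding the other branches. Everything else is Lebesgue measure of products of affine preimages, so there is no real obstacle; the main point is checking that the $x_c$-coordinate plays no role, which holds because every $\Omega_k(\beta)$ is a full vertical strip in the $x_u$-direction.
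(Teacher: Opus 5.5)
Your proposal is correct and takes essentially the same route as the paper. The paper's proof just records $\{R=2\}=E_1$ and $\{R=3\}=\Omega_1(\alpha)\cap T_1^{-1}(E_2)$ and states the measures. You add the omitted justification of these set identities via Proposition~\ref{character}, and the explicit computation using the affine branches of $\tau_{b_1},\tau_{b_2}$ in the $x_u$-coordinate; note that invariance of $m$ is never actually used there.
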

\begin{proof}Since
  $\{R=2\}= E_1$
  and 
  $\{R=3\}=\Omega_{1}(\alpha)\cap T_1^{-1}(E_2)$, the desired equalities hold. \end{proof}
\begin{prop}\label{R-fin}
If $\overline b<\frac{1}{2M}$, then
$R(x)$ is finite
for $m$-almost every $x\in X$.
\end{prop}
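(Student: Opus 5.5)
The plan is to use the characterization of $R$ in Proposition~\ref{character} together with a law-of-large-numbers/drift argument for the central Birkhoff sums $\phi_{1,n}$. By Proposition~\ref{character}, $R(x)<\infty$ as soon as there is some $n\ge0$ with $\phi_{1,n}(x)=c$ and $T_{1,n}(x)\in E_{n+1}$, where $c=-1$ if $x\in\Omega_1(\alpha)$ and $c=0$ if $x\in\Omega_1(\beta)$. So it suffices to show that for $m$-a.e.\ $x$ such an $n$ exists.

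\textbf{Step 1: symbolic model.} The first coordinate of $T_{1,n}(x)$ is $\tau_{b_n}\circ\cdots\circ\tau_{b_1}(x_u)$. Hence whether $T_{1,j}(x)\in\Omega_{j+1}(\alpha)$ or $\Omega_{j+1}(\beta)$ depends only on $x_u$, and these events are independent under Lebesgue measure, with probabilities $Mb_{j+1}$ and $1-Mb_{j+1}$. This holds because each $\tau_b$ is a full-branch affine map, so Lebesgue measure on $[0,1]$ is pushed forward to itself and the branch sequence is an independent sequence. The walk $\phi_{1,n}$ is therefore a nonhomogeneous random walk with steps $\pm1$, where $P(+1)=1-Mb_{j+1}\ge 1-M\overline b>\frac12$.

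The event $T_{1,n}(x)\in E_{n+1}$ additionally requires the $\beta_i$-subregions to align in the central coordinate, i.e.\ it involves $x_c$. The way to handle this is to note that $E_{n+1}$ only asks that the points at times $n$ and $n+1$ are both in the $\beta$ regions. This is again a condition on the $u$-symbols alone, with probability $(1-Mb_{n+1})(1-Mb_{n+2})\ge(1-M\overline b)^2>0$.

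\textbf{Step 2: hitting the level and continuing.} The condition is that the walk is at level $c$ at time $n$ and then takes two up-steps. Since $\phi_{1,0}=0$ and the first step from $x\in\Omega_1(\alpha)$ brings the walk to $-1$, the walk already starts at level $c$. Define stopping times $\sigma_0=0$ and, inductively, $\sigma_{k+1}$ as the first time after $\sigma_k+2$ at which the walk is again at level $c$. At each $\sigma_k$, the next two steps are both up with conditional probability at least $(1-M\overline b)^2$, independently of the past.

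\textbf{Step 3: positive drift and escape.} The main obstacle is that the walk has positive drift, so returns to $c$ are not guaranteed. If the first two steps are not both up, the walk may go down and will return to $c$ with probability one by positive drift. If instead it escapes to $+\infty$ without ever achieving the two-up pattern at level $c$, then we must use the nonstationary Markov diagram. Above the level $c$, the $\beta$-expansions cancel earlier $\alpha$-contractions, and a cutting time occurs precisely at the first $\beta\beta$ pattern at height $\geq c$ (Lemma~\ref{chara-lem} requires only $\phi_{1,n-2}\ge0$).

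So I will argue directly from Lemma~\ref{chara-lem}. For $x\in\Omega_1(\beta)$, $R(x)$ is at most the first $n$ with $\phi_{1,n-2}\ge0$ and a $\beta\beta$ pattern at times $n-2,n-1$. Since $\phi\ge0$ eventually holds forever a.s.\ (the drift is $\ge 1-2M\overline b>0$, by the strong law for independent bounded variables), and $\beta\beta$ patterns occur infinitely often a.s.\ (Borel--Cantelli on disjoint blocks with probability $\ge(1-M\overline b)^2$), $R<\infty$ a.e. For $x\in\Omega_1(\alpha)$, I reduce to this case via the preimage trick of Case~3 in Proposition~\ref{character}, or by applying the same argument to the shifted sequence.
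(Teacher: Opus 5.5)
Your proof is correct, but the route differs from the paper's. The paper argues by contradiction in one stroke. The strong law of large numbers for the independent variables $\phi_{j+1}\circ T_{1,j}+2Mb_{j+1}$ gives $\limsup_n\phi_{1,n}/n\ge 1-2M\overline b>0$ almost everywhere. On the other hand, Proposition~\ref{character} and the structure of the nonstationary Markov diagram show that $R(x)=\infty$ keeps $\phi_{1,n}(x)$ bounded above (it never exceeds $1$), so $\limsup_n\phi_{1,n}(x)/n\le 0$. Both starting regions are handled at once.

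You argue constructively instead:
\begin{itemize}
\item You produce the sufficient condition of Lemma~\ref{chara-lem} (a $\beta\beta$ block starting at nonnegative height) from the same law of large numbers plus a second Borel--Cantelli argument.
\item You reach $\Omega_1(\alpha)$ through the preimage device of Case~3. This is legitimate: take $b_0<\frac{1}{2M}$; then $T_0$ maps each piece $\Omega_0(\beta_i)\cap T_0^{-1}(\Omega_1(\alpha))$ affinely onto $\Omega_1(\alpha)$, so null sets correspond.
\end{itemize}
Your version makes explicit the step the paper leaves to ``the structure of the Markov diagram''. The cost is a case split and one superfluous ingredient. For $x\in\Omega_1(\beta)$, once $\phi_{1,n}\to+\infty$, the first passage of the walk to level $2$ necessarily comes from a $\beta\beta$ block starting at level $0$, so Borel--Cantelli is not needed.

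Two further remarks.

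First, the escape scenario you worry about in Step~3 cannot happen. Since $\phi_{1,n}(x)\to+\infty$, there is a last time $n$ with $\phi_{1,n}(x)=c$. The next two steps are then forced to be up, i.e.\ $T_{1,n}(x)\in E_{n+1}$. With Proposition~\ref{character} this gives $R(x)\le n+2$ in both cases, so your Step~2 idea already closes the proof with no reduction needed.

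Second, the ``if'' direction of Lemma~\ref{chara-lem}, as literally stated, is only reliable up to the first cutting time. Suppose the orbit of $x$ visits the $\beta,\beta,\beta,\alpha,\alpha,\beta,\beta$ regions at times $0,\ldots,6$. Then $T_{1,5}(x)\in E_6$ and $\phi_{1,5}(x)=1\ge 0$. Yet $T_{1,6}(K_6(x))$ has central length $\frac{1}{M}$, so $|K_7(x)|_c=|K_6(x)|_c$ and $7$ is not a cutting time. You only invoke the lemma at the first $n$ with the stated property. That $n$ is exactly $R(x)$, the first time $\phi_{1,\cdot}(x)$ reaches level $2$, so your argument is unaffected.
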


\begin{proof}
 It is easy to see that
$\{\phi_{j+1}\circ T_{1,j}+2Mb_{j+1}\}_{j\geq 0 }$ is a sequence of independent random variables on the probability space
$(X,m)$ with expectation $1$ and uniformly bounded variances. By the law of large numbers, for $m$-almost every $x\in X$ we have
\[\lim_{n\to\infty}\frac{1}{n}\sum_{j=0}^{n-1}\left(\phi_{j+1}\circ T_{1,j}(x)+2Mb_{j+1}\right)=1,\]
and thus
\begin{equation}\label{1-NUE}\begin{split}\limsup_{n\to\infty}\frac{\phi_{1,n}(x)}{n}&=\limsup_{n\to\infty}\frac{1}{n}\sum_{j=0}^{n-1}\phi_{j+1}\circ T_{1,j}(x)\\
&\geq\left(1-2M\limsup_{n\to\infty}\frac{b_1+\cdots+b_{n}}{n}\right)>0.\end{split}\end{equation}
The last inequality follows from the assumption $\overline b<\frac{1}{2M}$. Meanwhile, Proposition~\ref{character} and 
the structure of the nonstationary Markov diagram described in Section~\ref{graph-sec-s} together imply that
$\limsup_{n\to\infty}\phi_{1,n}(x)/n\leq0$ holds
provided
$x\in X$ 
and $R(x)=\infty$.
 Hence the desired conclusion follows.
\end{proof}

To proceed, 
we need another representation of $R$.
Let $V$ be a non-empty subset of $X$ and
let $n\in\mathbb Z_+$.  A connected component of $T_{1,n}^{-1}(V)$ is called a {\it pullback} of $V$ by $T_{1,n}$. If $W$ is a pullback of $V$ by $T_{1,n}$ and $T_{1,n}|_W$ is affine, then $W$ is called {\it an affine pullback of $V$ by $T_{1,n}$}.
If $V$ is connected and $W$ is an affine pullback of $V$ by $T_{1,n}$, then $T_{1,n}(W)=V$.

The next proposition represents the inducing time in terms of pullbacks.

\begin{prop}
 \label{exist-cor}
 If $x\in{\rm int}(X)$,
$R(x)$ is finite and $T_{1,R(x)}(x)\in{\rm int}(X)$, then 
\[R(x)=\min\left\{\begin{split}&n\geq2\colon\text{there exists an affine pullback $B$ of ${\rm int}(X)$ by $T_{1,n}$}\\
&\quad\quad\quad\ \text{such that }
x\in B\text{ and }|B|_c=\frac{1}{M}|K_{1}(x)|_c\end{split}\right\}.\]
\end{prop}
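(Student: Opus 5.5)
We will prove the two inequalities between $R(x)$ and the minimum $n_*$ on the right-hand side separately. Throughout we use the structure of the affine pieces of $T_{1,n}$: for each $n$, the rectangle $K_n(x)$ is mapped by $T_{1,n}$ affinely, and $T_{1,n}(K_n(x))$ has full $u$-length (the $x_u$-coordinate evolves by the full-branch maps $\tau_{b_k}$). Its $c$-length is obtained from $|K_n(x)|_c$ by multiplying by $M^{\phi_{1,n}(x)}$. Moreover, the $c$-length of $K_n(x)$ changes from $K_{n-1}(x)$ either by the factor $1$ or by $\frac1M$, and the latter happens exactly at cutting times. The key observation is that a pullback $B\ni x$ of ${\rm int}(X)$ by $T_{1,n}$ on which $T_{1,n}$ is affine must be contained in $K_n(x)$. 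Conversely, $K_n(x)$ itself is (up to boundary) an affine pullback of ${\rm int}(X)$ precisely when $T_{1,n}(K_n(x))$ has $c$-length $1$.

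\emph{Step 1: $n_*\le R(x)$.} Put $n=R(x)$. Since no $k\in\{2,\dots,n-1\}$ is a cutting time, $|K_{n-1}(x)|_c=|K_1(x)|_c$, and since $n$ is a cutting time, $|K_n(x)|_c=\frac1M|K_1(x)|_c$. It then remains to check that $T_{1,n}(K_n(x))={\rm int}(X)$ up to boundary. We would do this via Proposition~\ref{character} and Lemma~\ref{chara-lem}. Consider first the case $x\in\Omega_1(\beta)$. Here $T_{1,n-2}(x)\in E_{n-1}$ and $\phi_{1,n-2}(x)=0$, and the image $T_{1,n-2}(K_{n-2}(x))$ has $c$-length $\frac1M$ and is one horizontal strip $\Omega_{n-1}(\beta_i)$ restricted to the relevant $u$-interval. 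Applying two $\beta$-steps expands it to full height, so $T_{1,n}(K_n(x))$ is a full square. The case $x\in\Omega_1(\alpha)$ is handled identically using $\phi_{1,n-2}(x)=-1$, since $|K_1(x)|_c=1$ there. The hypothesis $T_{1,R(x)}(x)\in{\rm int}(X)$ guarantees that $x$ lies in the interior-pullback $B=K_n(x)\cap T_{1,n}^{-1}({\rm int}(X))$. Hence $n$ is admissible, and $n_*\le R(x)$.

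\emph{Step 2: $R(x)\le n_*$.} Let $n\ge2$ be admissible with pullback $B$. Since $B\subset K_n(x)$ and $T_{1,n}(B)={\rm int}(X)$, we get $|T_{1,n}(K_n(x))|_c=1$, and this forces $B$ to coincide with $K_n(x)$ up to boundary. Therefore $|K_n(x)|_c=\frac1M|K_1(x)|_c$. Because the $c$-lengths of $K_k(x)$ are nonincreasing in $k$, and change only by factors $\frac1M$ exactly at cutting times, some $k\in\{2,\dots,n\}$ must be a cutting time. Hence $R(x)\le k\le n$, and taking $n=n_*$ gives $R(x)\le n_*$.

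The main obstacle is the claim in Step 1 that the image of $K_{R(x)}(x)$ is the full square. Equivalently, the images at intermediate times must stay tied to horizontal strips of height $M^{-j}$ aligned with the partitions, rather than straddling $\Omega(\beta_i)$ boundaries. We expect to handle this by an induction on $k$ tracking the pair $\bigl(\phi_{1,k}(x),\,T_{1,k}(K_k(x))\bigr)$. The image is a rectangle of full $u$-width whose $c$-side is a dyadic-type interval $[\frac{j}{M^p},\frac{j+1}{M^p}]$. $\alpha$-steps shrink it into a subinterval of this form, and $\beta$-steps either expand it or, when it is already full height, cut it. This is exactly the nonstationary Markov diagram bookkeeping of Section~\ref{graph-sec-s}, which is reduced to the stationary case in \cite{T25} via the map $\Phi$.
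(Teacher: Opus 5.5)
Your proposal is correct and follows essentially the paper's route. Step 1 is the paper's own argument: Proposition~\ref{character} places $T_{1,R(x)-2}(x)$ in $E_{R(x)-1}$ with $\phi_{1,R(x)-2}(x)\in\{0,-1\}$, so two $\beta$-steps turn a strip of height $\frac1M$ into the full square with exactly one $c$-cut. Step 2 spells out the reverse inequality that the paper calls obvious, and the $M$-adic induction you flag as the main obstacle is correct; it is the structural fact the paper takes for granted from the Markov diagram. The case $R(x)=2$ (i.e.\ $x\in E_1$), where your description via $K_{n-2}=K_0$ does not literally apply, needs the trivial separate treatment the paper gives it.
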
\begin{proof}
In the case
$R(x)=2$ we have $x\in E_{1}$, and the desired equality is obvious. Suppose $R(x)\geq3$. Then $x\notin E_{1}$ holds.
By Proposition~\ref{character}, there exists
 $i\in\{1,\ldots,M\}$ such that
$T_{1, R(x)-2}(x)\in\Omega_{R(x)-1}(\beta_i)\cap E_{R(x)-1 }$. 
    The pullback of the rectangle
$\Omega_{R(x)-1}(\beta_i)\cap E_{R(x)-1 }$
by $T_{1,R(x)-2}$
that contains $x$, denoted by $B'$, is an affine pullback and satisfies
 \[|B'|_c=|K_{1}(x)|_c.\]
 
  Proposition~\ref{character} implies
 ${\rm int}(\Omega_{R(x)-1}(\beta_i)\cap E_{R(x)-1})\subset T_{1,R(x)-2}(B')$.
 In particular,
 $T_{1,R(x)-1}|_{B'}$ is affine
and ${\rm int}(\Omega_{R(x)}(\beta))\subset T_{1,R(x)-1}(B')$.
From the assumption $T_{1,R(x)}(x)\in{\rm int}(X)$, there exists an affine pullback $B$ of ${\rm int}(X)$ by $T_{1,R(x)}$ satisfying $x\in B\subset B'$ and \[|B|_c=\frac{1}{M}|B'|_c=\frac{1}{M}|K_{1}(x)|_c.\] Hence,
the minimum in the proposition does not exceed $R(x)$.
 The reverse inequality is obvious from the property of $B$.
\end{proof}

\if0\subsection{Symbolic coding of nonstationary heterochaos baker maps} \label{Hofbauer-sec}


\begin{prop}\label{symbol-prop}
For any sequence $(a_k)_{k=1}^\infty$ of reals in $(0,\frac{1}{M})$ and any $k\in\mathbb N$, we have \[\overline{\pi_{k}(\Lambda_{k})}=\Sigma_D^+.\]
\end{prop}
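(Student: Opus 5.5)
Throughout, $\pi_k$ denotes the coding map sending a point of $\Lambda_k$ to the symbol sequence $(\omega_j)_{j\geq0}$ with $T_{k,k+j-1}(x)\in\Omega_{k+j}(\omega_j)$. I read $\Lambda_k$ as the set of points whose nonstationary orbit under $(T_n)_{n\geq k}$ never meets the boundaries of the regions $\Omega_n(\gamma)$; this set has full Lebesgue measure. I read $\Sigma_D^+$ as the one-sided Dyck shift, i.e. the sequences in $D^{\mathbb N}$ all of whose finite subwords $\omega$ satisfy ${\rm red}(\omega)\neq0$. It cannot be the full shift on $D$: a word $\alpha_i\beta_j$ with $i\neq j$ is never realized, since $f_a$ sends $\Omega_a(\alpha_i)$ into the horizontal strip $[\frac{i-1}{M},\frac iM)$.

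The plan is to show that the set of finite words appearing in sequences of $\pi_k(\Lambda_k)$ equals $\{\omega\in D^*\colon{\rm red}(\omega)\neq0\}$, and then pass to closures. Since the whole setting is shift-invariant in $k$, I apply everything to the truncated sequence $(b_n)=(a_{k+n-1})_{n\geq1}$ and so reduce to $k=1$.

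\emph{Inclusion $\subset$.} I encode the center coordinate as a stack. Writing $x_c$ in base $M$, the branch on $\Omega(\alpha_i)$ pushes the digit $i-1$ onto the front of the expansion, and the branch on $\Omega(\beta_i)$ requires the leading digit to be $i-1$ and then pops it. This holds regardless of the parameter, which only affects the $x_u$-partition. By induction on word length, if a subword of an orbit's itinerary is $\alpha_i w\beta_j$ with $w$ reducing to $1$, then the stack after reading $\alpha_i w$ has top digit $i-1$, which forces $j=i$. Hence every itinerary word avoids the relations producing $0$, so ${\rm red}\neq0$. Subwords starting at a later time are handled by applying the same argument to the orbit from that time on.

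\emph{Inclusion $\supset$.} Take a word $\omega$ with ${\rm red}(\omega)\neq0$. By \eqref{identity}, applied to the sequence $(b_n)$, there is a vertex $v\in\mathcal V$ with $\Pi(v)=\omega$. Pull $v$ back along the path $v_1\to\cdots\to v_n=v$. This gives a nonempty open set in ${\rm int}(\Omega_1(\omega_1))$ whose points have itinerary beginning with $\omega$. Its Lebesgue measure is positive, so it meets the full-measure set $\Lambda_1$. Finally, given $\eta\in\Sigma_D^+$, each prefix $\eta_0\cdots\eta_{n-1}$ is admissible and is therefore realized by some $x^{(n)}\in\Lambda_1$. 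Then $\pi_1(x^{(n)})\to\eta$ in the product topology, which gives $\Sigma_D^+\subset\overline{\pi_1(\Lambda_1)}$. Conversely $\pi_1(\Lambda_1)\subset\Sigma_D^+$ by the first inclusion, and $\Sigma_D^+$ is closed.

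The main obstacle is verifying \eqref{identity} in the genuinely nonstationary case: each nonempty admissible word must give a nonempty vertex, even though the $x_u$-widths vary with $n$. I would handle this by observing that the $x_u$-dynamics is a full-branched Markov map at every step, so every $x_u$-itinerary is realized by a nonempty interval. Combined with the stack description of $x_c$, this reduces the nonemptiness of $v$ to the purely combinatorial Dyck condition.
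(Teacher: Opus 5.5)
Your argument is correct, but it takes a different route from the paper's. The paper's sketch uses the equivariance $\sigma^n\circ\pi_k=\pi_{k+n}\circ T_{k,k+n-1}$ on $\Lambda_k$. It then asserts that $\pi_k(\Lambda_k)$ is shift-invariant and independent of the parameter sequence and of $k$; the authors themselves flag this step as ``not so immediate''. Finally it reduces to a constant sequence, where \cite[Theorem~1.2]{TY23} identifies the closure with the one-sided Dyck shift.

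You instead compute the language of $\pi_k(\Lambda_k)$ directly, for an arbitrary sequence. The inclusion into the Dyck language follows because the center maps along any word with ${\rm red}=1$ compose to the identity, so a subword $\alpha_i w\beta_j$ forces $j=i$. The reverse inclusion follows from nonemptiness of cylinders together with $m(\Lambda_k)=1$.

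The paper's route is shorter but rests on an unproved independence claim and an external theorem. Yours is self-contained and uses nothing about the parameters beyond each $\tau_{a_k}$ being full-branched. In effect it also proves the paper's independence claim, since every sequence yields exactly the Dyck language. It also makes explicit why $\Lambda_k$ must exclude orbits meeting region boundaries: starting from $x_c=1$ in $\Omega(\alpha_i)$ one produces the forbidden word $\alpha_i\beta_{i+1}$.

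One step should be tightened. Do not invoke \eqref{identity} in the form ``there is a vertex $v\in\mathcal V$ with $\Pi(v)=\omega$.'' Read literally, with vertices being sets and each set carrying a single address, this fails. For a constant sequence, $T_1$ maps ${\rm int}(\Omega_1(\beta_M))$ onto ${\rm int}(X)$, so its successors ${\rm int}(\Omega_2(\gamma))={\rm int}(\Omega_1(\gamma))$ already lie in $\mathcal V_1$. Hence the admissible word $\beta_M\gamma$ is the address of no vertex.

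What you actually need is only that the cylinder $W(\omega)$ is a nonempty open set, and your final paragraph gives exactly this once written out. The set $W(\omega)$ is a product $I_\omega\times J_\omega$. Here $I_\omega$ is a nonempty open interval by full-branchedness of each $\tau_{b_n}$. Writing ${\rm red}(\omega)=\omega^+\omega^-$, $J_\omega$ contains the open interval of length $M^{-|\omega^+|}$ of center coordinates whose leading base-$M$ digits are prescribed by $\omega^+$. Matched $\beta$'s impose no further constraint, since balanced blocks act as the identity on $x_c$. With this the appeal to \eqref{identity} can be dropped entirely.
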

\begin{proof}For all $k,n\in\mathbb N$ we have
\[\sigma^n\circ\pi_k|_{\Lambda_k}=\pi_{k+n}\circ f_{k,k+n-1}|_{\Lambda_k}.\]  It is easy to check that 
$\pi_k(\Lambda_k)$ is a shift invariant subset of $D^{\mathbb N}$ that is independent of $(a_k)_{k=1}^\infty$ and $k$. (Not so immediate.)
As shown in \cite[Theorem~1.2]{TY23}, if $a_k=a$ 
for all $k\in\mathbb N$ then the closed shift invariant set
$\overline{\pi_{k}(\Lambda_{k})}$ is the one-sided Dyck shift \cite{Kri74}.
Taking $\Sigma^+$ to be this subshift we obtain the desired statement.\end{proof}
\fi


\subsection{Construction of a partition}\label{ind-exp-sec}
For $\omega\in D^*$ with ${\rm red}(\omega)\neq0$, define
\[W(\omega)=\bigcap_{j=0}^{|\omega|-1}T_{1,j}^{-1}({\rm int}(\Omega_{j+1}(\omega_{j+1}))),\]
where $|\omega|$ denotes the word length of $\omega$
and $\omega=\omega_1\cdots \omega_{|\omega|}$.
It is easy to see that 
$W(\omega)$
is an open rectangle in ${\rm int}(X)$, and
 $T_{1,|\omega|}$ maps $W(\omega)$ affinely onto its image. So, 
$W(\omega)$ is an affine pullback of $T_{1,|\omega|}(W(\omega ))$ by $T_{1,|\omega|}$.

For each integer $n\geq2$,
 define
\[\mathcal{P}_{n}=\{W(\omega)\colon \omega\in D^*,\ |\omega|=n,\ R|_{W(\omega)}=n,\ T_{1,n}(W(\omega))={\rm int}(X)\}.\]
In other words, $\mathcal{P}_{n}$ is
the collection of affine pullbacks 
of ${\rm int}(X)$ by $T_{1,n}$ that are contained in the set $\{R=n\}$.
We set \[\mathcal{P}=\bigcup_{n\geq2}\mathcal{P}_{n}.\] 
Elements of $\mathcal{P}$ are pairwise disjoint open rectangles, and $R$ is constant on each $W\in\mathcal{P}$. This constant is denoted by $R(W)$.
We set \[Y=\bigcup_{W\in\mathcal{P} } W.\]
Propositions~\ref{R-fin} and \ref{exist-cor} together imply $m(X\setminus Y)=0$.
For each $W\in\mathcal{P}$ define a map $F_W\colon W\to X$ by 
$F_W=T_{1,R(W)}|_W$.
Clearly, $F_W$ is affine and $F_W(W)={\rm int }(X)$.

Put 
    \begin{equation}\label{lambda}\lambda=\min\left\{\frac{1}{\overline b},\frac{1}{1-M\underline b}\right\}>1.\end{equation}
By construction, 
for all $W\in\mathcal P$ and $x\in W$ we have
\begin{equation}\label{del1}
\left\|DF_W(x)\left(\begin{smallmatrix}1\\0\end{smallmatrix}\right)\right\|\geq \lambda^{R(W)},\end{equation}
where $D$ denotes the derivative and $\|\cdot\|$ denotes the Euclidean norm.
Moreover, 
for $j=0,\ldots,R(W)-1$ we have 
\begin{equation}\label{del2}\left\|DT_{1,j}(x) \left(\begin{smallmatrix}1\\0\end{smallmatrix}\right)\right\|\leq \lambda^{j-R(W)}\left\|DF_{W}(x) \left(\begin{smallmatrix}1\\0\end{smallmatrix}\right)\right\|,\end{equation} and 
 \begin{equation}\label{del3}\left\|DT_{1,j}(x) \left(\begin{smallmatrix}0\\1\end{smallmatrix}\right)\right\|\leq \frac{1}{M}\left\|DF_{W}(x)\left(\begin{smallmatrix}0\\1\end{smallmatrix}\right)\right\|.\end{equation}
 Since the heterochaos baker maps preserve the $x_u$- and $x_c$-directions, 
 \eqref{del1} and \eqref{del3} with $j=0$
 together imply that
for all $W\in\mathcal P$ and $x,x'\in W$,
\begin{equation}\label{del4}|F_W(x)-F_W(x')|\geq\min\{\lambda,M\}|x-x'|,\ \min\{\lambda,M\}>1.\end{equation}
 Similarly, \eqref{del2} and \eqref{del3}
  together imply that
for all $W\in\mathcal P$ and $x,x'\in W$,
\begin{equation}\label{del5}
	\max_{0 \leq j \leq R(W)} |T_{1,j}(x)- T_{1,j}(x')|
	\leq  |F_W(x)- F_W(x')|.\end{equation}

\subsection{Exponential tail of inducing time}\label{tail-sec}
Throughout Sections~\ref{graph-sec-s} to \ref{tail-sec} we have been working with the one-sided sequence $(b_k)_{k\geq1}$ of parameters in $(0,\frac{1}{2M})$. Under the admissibility condition \eqref{eq:param_range} on this sequence, we deduce an exponential decay of the Lebesgue measure of the tail
probability of inducing time.
\begin{prop}\label{tail-eq0}
If $(b_k)_{k\geq1}$ is admissible, then 
there exist constants $C>0$, $A_0>0$ depending only on $M, \overline{b}, \underline{b}$
such that for all $n\geq 1$,
\[m(R=n)\leq C\exp(-A_0n).\]
\end{prop}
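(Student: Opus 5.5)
We will bound $m(R=n)$ by summing, over the symbolic itineraries that realize $\{R=n\}$, the Lebesgue measure of the corresponding cylinder sets $W(\omega)$, and then compare the number of such itineraries (controlled by Lemma~\ref{path-number}) with the measure of each cylinder (controlled by the parameters $b_k$). First we decompose $\{R=n\}$ (up to a null set) into cylinders $W(\omega)$ with $|\omega|=n-1$ (or $n$), according to the word $\omega=\omega_1\cdots\omega_{n-1}$ read along the orbit. By Proposition~\ref{character}, on $\{R=n\}$ the word has the following structure: $\phi_{1,n-2}=0$ or $-1$ depending on whether $\omega_1\in D(\beta)$ or $D(\alpha)$, the last two symbols lie in $D(\beta)$ (the orbit hits $E_{n-1}$), and no earlier time satisfies the stopping condition. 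Hence the number of $\alpha$-symbols and $\beta$-symbols among the first $n-2$ positions is balanced, namely about $(n-2)/2$ each. Moreover, before time $n-2$ two consecutive $\beta$'s cannot occur at the level $\phi=0$ (or $-1$), which is exactly the constraint encoded in $P^*(\cdot)$. I would use the injection from itineraries to paths in the nonstationary Markov diagram, so that the admissible words on $\{R=n\}$ correspond injectively to elements of $P^*(n-2)$ or $P^*(n-1)$, up to a bounded multiplicative factor coming from the final symbols.

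The second step is the measure of each cylinder. Since every $T_k$ preserves $m$ and acts as a skew product, $m(W(\omega))=\prod_j m_{j}(\omega_j)$, where $\Omega_j(\alpha_i)$ has measure $b_j$ and $\Omega_j(\beta_i)$ has measure $(1-Mb_j)/M$. This uses the independence noted in the proof of Proposition~\ref{R-fin}: the $x_u$-coordinate is driven by independent $\tau_{b_j}$-digits. A word with $p$ symbols from $D(\alpha)$ and $q$ from $D(\beta)$ therefore has measure at most $\overline b^{\,p}\bigl((1-M\underline b)/M\bigr)^{q}$. With $p\approx q\approx n/2$, this gives $m(W(\omega))\le C\bigl(\overline b(1-M\underline b)/M\bigr)^{n/2}$.

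Combining the two steps with Lemma~\ref{path-number} and Stirling's bound $\binom{n+1}{\lfloor (n+2)/2\rfloor}\le C2^{n}$ yields
\[
m(R=n)\le C\, 2^{n}M^{n/2}\Bigl(\frac{\overline b(1-M\underline b)}{M}\Bigr)^{n/2}
= C\bigl(4\overline b(1-M\underline b)\bigr)^{n/2}.
\]
It then remains to check that $4\overline b(1-M\underline b)<1$. This is equivalent to $\underline b>\frac{4\overline b-1}{4M\overline b}$, and the latter threshold is implied by \eqref{eq:param_range} after matching the $M$-normalization exactly. This is precisely where admissibility enters, which strongly suggests that this counting is the intended route. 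We then take $A_0=-\tfrac12\log\bigl(4M\overline b(1-M\underline b)\bigr)$ with the correct $M$-power. The main obstacle, and the place I would be careful, is the exact bookkeeping of the $M$-factors. The count in Lemma~\ref{path-number} includes a factor $M^{\lfloor n/2\rfloor}$, which should correspond to the choices of the $\beta_i$ that are not forced by the Dyck matching; only the unmatched $\beta$'s carry a free index. The cylinder measures must be paired with these choices correctly, and the parity cases of $n$ and the initial symbol ($\alpha$ vs.\ $\beta$, cf.\ Proposition~\ref{character}(a),(b)) must be handled so that the exponents $p,q$ are exactly those used above.
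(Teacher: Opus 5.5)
Your architecture is the paper's: bound $m(R=n+1)$ by $\#P^*(n)$ from Lemma~\ref{path-number}, times a uniform bound on $m(W)$ for $W\in\mathcal P$ with $R(W)=n+1$, then use Stirling and Lemma~\ref{calculus}. However, Step~2 contains a genuine error, and it sits exactly in the $M$-bookkeeping you flag. The product formula $m(W(\omega))=\prod_j m_j(\omega_j)$, with weight $(1-Mb_j)/M$ per $\beta$-symbol, is false. The independence used in the proof of Proposition~\ref{R-fin} concerns only the dichotomy $\alpha$ versus $\beta$, which is read off the $x_u$-coordinate. The index of a $\beta$-symbol depends on $x_c$. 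After $\alpha_i$ the central coordinate lies in $[\frac{i-1}{M},\frac{i}{M})$, so a following $\beta$ is forced to be $\beta_i$. For example, $m(W(\alpha_1\beta_2))=0$, while $m(W(\alpha_1\beta_1))=b_1(1-Mb_2)$ rather than $b_1(1-Mb_2)/M$.

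In general $W(\omega)$ is a rectangle. Its $u$-width is $\prod_{\omega_j\in D(\alpha)}b_j\prod_{\omega_j\in D(\beta)}(1-Mb_j)$, and its central height is $M^{-|\omega^+|}$. Only unmatched $\beta$'s cost a factor $\frac1M$, and for $W\in\mathcal P$ one has $|W|_c\in\{\frac1M,\frac1{M^2}\}$. Your cylinder bound therefore underestimates by a factor of order $M^{n/2}$. The free factor $M^{\lfloor n/2\rfloor}$ in Lemma~\ref{path-number} counts the $\alpha$-indices, and these cylinders carry no $\frac1M$ per $\beta$.

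As a consequence, your displayed inequality $m(R=n)\le C\bigl(4\overline b(1-M\underline b)\bigr)^{n/2}$ is not only unjustified but false. For a constant (admissible) sequence $b_k=a$, the $\pm1$ walk $\phi_{1,n}$ has step probabilities $Ma$ and $1-Ma$. Its long excursions below the stopping level already give $m(R=n)\gtrsim n^{-3/2}\bigl(4Ma(1-Ma)\bigr)^{n/2}$ along a parity class, which exceeds your bound since $M\ge2$. Your own sanity check should have flagged this: $4\overline b(1-M\underline b)<4\overline b<\frac2M\le1$ holds for every parameter in $(0,\frac1{2M})$, so your estimate would make admissibility superfluous. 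The ``correct $M$-power'' you insert into $A_0$ at the end does not follow from what you derived.

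The fix is the paper's Lemma~\ref{area}. Bound $m(W)\le|W|_u\le\overline b^{R_W(\alpha)}(1-M\underline b)^{R_W(\beta)}\le\exp\bigl(-\sigma(\overline b,\underline b)(R(W)-2)\bigr)$, using the exact counts $R_W(\alpha),R_W(\beta)$ from Proposition~\ref{exist-cor}. Multiplying by $\#P^*(n)\lesssim n^{-3/2}2^{n}M^{n/2}$ gives $\bigl(\sqrt{4M}\,e^{-\sigma(\overline b,\underline b)}\bigr)^{n}=\bigl(4M\overline b(1-M\underline b)\bigr)^{n/2}$. Lemma~\ref{calculus}, i.e.\ admissibility, is exactly what makes this base less than~$1$.
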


To prove Proposition~\ref{tail-eq0}, we need to estimate the size of each element of the partition
$\mathcal P$, and  the cardinality of the set of elements of $\mathcal P$ with a given inducing time.
For $a$, $b\in(0,\frac{1}{2M}]$ with 
$a\geq b$, 
define 
\[\sigma(a,b)=-\log\sqrt{a(1-Mb)}.\]

\begin{figure}
\begin{center}
\includegraphics[height=5cm,width=15cm]
{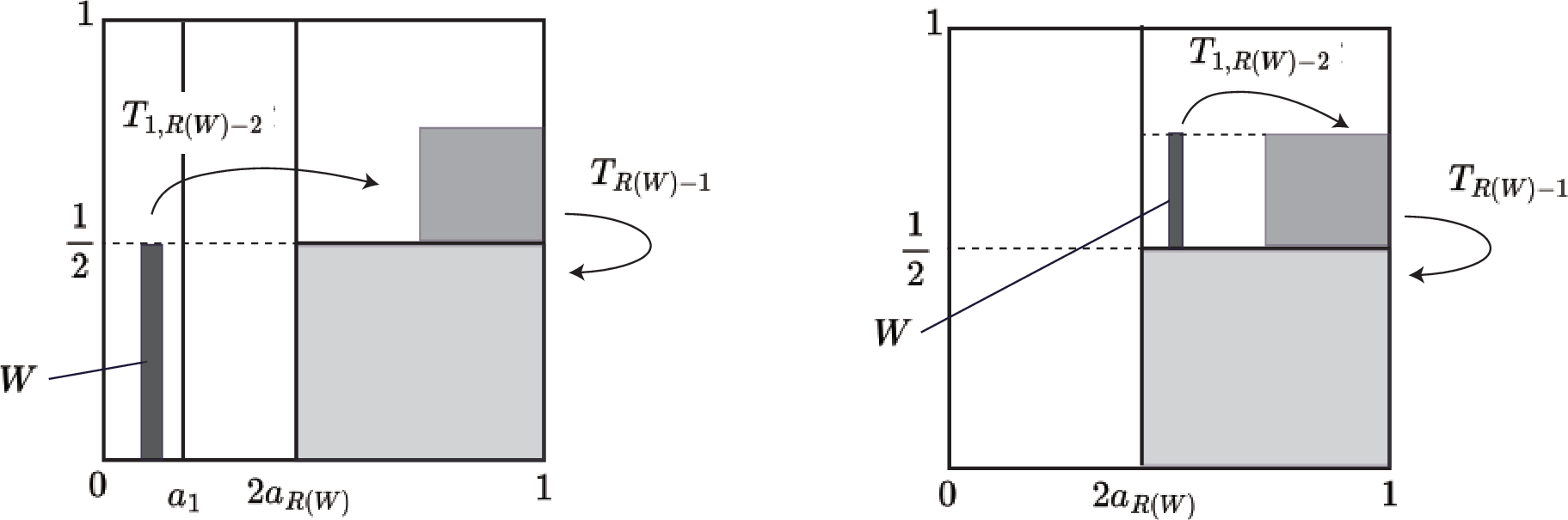}
\caption
{On the proof of Lemma~\ref{area}, the images of each
$W\in\mathcal P$ for $M=2$: $W\subset\Omega_{1}(\alpha)$ and $|W|_c=\frac{1}{M}$ (left); $W\subset\Omega_{1}(\beta)$ and $|W|_c=\frac{1}{M^2}$ (right); $T_{R(W)}\circ T_{R(W)-1}\circ T_{1,R(W)-2}=T_{1,R(W)}$, 
$m(X\setminus T_{1,R(W) }(W ))=0$ in both cases.
  }\label{fig-image}
\end{center}
\end{figure}

\begin{lemma}\label{area}
      For all $W\in\mathcal P$ we have 
    \[
 m(W)\leq
      \exp(-\sigma(\overline b,\underline b)(R(W)-2)).\]
\end{lemma}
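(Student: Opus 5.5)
The plan is to compute $m(W)$ exactly from the affine structure and then bound it by counting contracting and expanding steps. Fix $W\in\mathcal P$ and put $n=R(W)\ge 2$. By construction $W=W(\omega)$ for a word $\omega=\omega_1\cdots\omega_n$. The map $F_W=T_{1,n}|_W$ is affine and maps the rectangle $W$ onto ${\rm int}(X)$, and it preserves the $x_u$- and $x_c$-directions. Hence
\[
|W|_u\cdot\prod_{j=1}^{n}\Lambda_j=1,\qquad |W|_c\cdot M^{\phi_{1,n}}=1 \ \text{ on } W .
\]
Here $\Lambda_j=1/b_j$ if $\omega_j\in D(\alpha)$ and $\Lambda_j=1/(1-Mb_j)$ if $\omega_j\in D(\beta)$. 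So $m(W)=|W|_u|W|_c$, with $|W|_u=\prod_{\omega_j\in D(\alpha)}b_j\prod_{\omega_j\in D(\beta)}(1-Mb_j)$.

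Next I determine $|W|_c$, and with it the number of $\alpha$-steps. By Proposition~\ref{exist-cor}, together with the definition of $\mathcal P_n$, we have $|W|_c=\frac1M|K_1(x)|_c$. This equals $\frac1M$ if $W\subset\Omega_1(\alpha)$ and $\frac1{M^2}$ if $W\subset\Omega_1(\beta)$. The central identity above then gives $\phi_{1,n}=1$ or $2$ on $W$. Let $k$ be the number of $j$ with $\omega_j\in D(\alpha)$. Since $\phi_{1,n}=n-2k$, we get $k=\frac{n-1}{2}$ or $k=\frac{n-2}{2}$. In either case $k\ge\frac{n-2}{2}$ and $n-k\ge k$.

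Finally I bound the product. Use $b_j\le\overline b$ on $\alpha$-steps and $1-Mb_j\le 1-M\underline b\le 1$ on $\beta$-steps. This gives
\[
m(W)\le |W|_u\le \overline b^{\,k}(1-M\underline b)^{n-k}=\bigl(\overline b(1-M\underline b)\bigr)^{k}(1-M\underline b)^{n-2k}\le\bigl(\overline b(1-M\underline b)\bigr)^{k}.
\]
Since $\overline b(1-M\underline b)<1$ and $k\ge\frac{n-2}{2}$, the right-hand side is at most $\bigl(\overline b(1-M\underline b)\bigr)^{(n-2)/2}=\exp(-\sigma(\overline b,\underline b)(n-2))$, which is the claim. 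The extra factor $|W|_c\le\frac1M$ is simply discarded.

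The only step needing care is the second one: establishing that $|W|_c$ is exactly $\frac1M|K_1(x)|_c$, and hence pinning down $\phi_{1,n}\in\{1,2\}$. I will take this from Proposition~\ref{exist-cor} applied at any $x\in W$. This is legitimate because $R\equiv n$ on $W$ and $T_{1,n}(W)={\rm int}(X)$, and the minimality in that proposition forces the affine pullback of ${\rm int}(X)$ containing $x$ to be $W$ itself. The remaining computations are routine.
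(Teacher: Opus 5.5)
Your proposal is correct and follows essentially the paper's own argument: Proposition~\ref{exist-cor} pins down $|W|_c\in\{\frac1M,\frac1{M^2}\}$, which gives $\phi_{1,R(W)}\in\{1,2\}$ and hence the number of $\alpha$-steps. After that, $m(W)\le|W|_u\le\overline b^{\,k}(1-M\underline b)^{n-k}\le(\overline b(1-M\underline b))^{(n-2)/2}$, exactly as in the paper. The differences are cosmetic (you state the identity $|W|_cM^{\phi_{1,n}}=1$ explicitly and factor the product differently), but the pullback $B$ from Proposition~\ref{exist-cor} coincides with $W$ because both are connected components of $T_{1,n}^{-1}({\rm int}(X))$ containing $x$, not because of the minimality in that proposition.
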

\begin{proof}
Recall that $R(W)\geq2$.
For $\gamma=\alpha,\beta$ put
\[R_{W}(\gamma)=\#\left\{0\leq j\leq R(W)-1\colon  T_{1,j}(W)\subset\Omega_{j+1}(\gamma)\right\}.\]
Clearly we have $R_W(\alpha)+R_W(\beta)=R(W).$
 Proposition~\ref{exist-cor} implies the following:
 \begin{itemize}
\item[(i)] If $W\subset\Omega_{1}(\alpha)$, then 
 $|W|_c=\frac{1}{M}$ and
  $\phi_{1,R(W) }=1$ on $W$ (see \textsc{Figure}~\ref{fig-image} left).
  In particular, $R(W)$ is odd and
   \[R_W(\alpha)=\frac{1}{2}(R(W)-1)\ \text{ and }\ R_W(\beta)=\frac{1}{2}(R(W)+1).\]

\item[(ii)] If $W\subset\Omega_{1}(\beta)$, then 
  $|W|_c=\frac{1}{M^2}$ and
   $\phi_{1,R(W) }=2$ 
   on $W$ (see \textsc{Figure}~\ref{fig-image} right).
   In particular, $R(W)$ is even and
      \[R_W(\alpha)=\frac{1}{2}(R(W)-2)\ \text{ and }\ R_W(\beta)=\frac{1}{2}(R(W)+2).\] 
      \end{itemize}

      Under the iteration of each heterochaos baker map $f_a$, the $x_u$-direction is expanding by factor $\frac{1}{a}$
      or $\frac{1}{1-Ma}$ according as on $\Omega_a(\alpha)$ or on $\Omega_a(\beta)$.
Hence we get
\[\begin{split}m(W)=|W|_u
|W|_c\leq|W|_u&\leq \overline b^{R_W(\alpha)}(1-M\underline b)^{R_W(\beta)}\\&\leq\overline b^{\frac{1}{2}(R(W)-2)}(1-M\underline b)^{\frac{1}{2}(R(W)+1)}\\&\leq 
\exp(-\sigma(\overline b,\underline b)(R(W)-2)),\end{split}\]
 as required.
 \end{proof}

The next lemma motivates the admissibility condition \eqref{eq:param_range}. 
Note that
$\frac{4Ma-1}{4M^2a}\leq a$
for all $a\in(0,\frac{1}{2M}]$,
and the inequality is strict only if $a\neq\frac{1}{2M}$.

\begin{lemma}\label{calculus}If $a\in(0,\frac{1}{2M})$ and $b\in(\max\{0,\frac{4Ma-1}{4M^2a}\},a]$, then 
\[\sqrt{4M}e^{-\sigma(a,b)}<1.\] \end{lemma}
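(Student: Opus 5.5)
The inequality reduces to elementary algebra once we unwind the definition of $\sigma$. Since $e^{-\sigma(a,b)}=\sqrt{a(1-Mb)}$, the claim $\sqrt{4M}e^{-\sigma(a,b)}<1$ is equivalent to
\[
4Ma(1-Mb)<1.
\]
Because $a>0$, this can be rewritten as a lower bound on $b$:
\[
1-Mb<\frac{1}{4Ma}\iff Mb>1-\frac{1}{4Ma}=\frac{4Ma-1}{4Ma}\iff b>\frac{4Ma-1}{4M^2a}.
\]
So the plan is to perform this chain of equivalences and then check that the hypothesis on $b$ delivers the last inequality.

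Two cases arise according to the sign of $4Ma-1$.

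\emph{Case $a\geq\frac{1}{4M}$.} Here $\max\{0,\frac{4Ma-1}{4M^2a}\}=\frac{4Ma-1}{4M^2a}$. The hypothesis $b>\frac{4Ma-1}{4M^2a}$ is then exactly the required strict inequality, and we are done.

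\emph{Case $a<\frac{1}{4M}$.} The lower bound on $b$ is $0$, so we argue directly. Since $b>0$, we have $1-Mb<1$, and hence
\[
4Ma(1-Mb)<4Ma<1.
\]

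One should also confirm that the interval of admissible $b$ is nonempty and that the quantities involved make sense. The condition $b\le a<\frac{1}{2M}$ gives $1-Mb>0$, so the square root defining $\sigma(a,b)$ is real. For nonemptiness, the preceding remark in the paper shows that $\frac{4Ma-1}{4M^2a}<a$ for $a\neq\frac{1}{2M}$; this follows from $(2Ma-1)^2>0$.

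There is no genuine obstacle in this argument; the only point requiring care is keeping the direction of each inequality correct when multiplying or dividing by positive quantities.
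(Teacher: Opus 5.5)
Your proof is correct and is essentially the paper's argument: both reduce the claim to $4Ma(1-Mb)<1$, i.e.\ $4M^2ab>4Ma-1$, and this is exactly what the hypothesis on $b$ provides. The case split is harmless but unnecessary, since $b>\max\{0,\frac{4Ma-1}{4M^2a}\}$ already gives $b>\frac{4Ma-1}{4M^2a}$ whatever the sign of $4Ma-1$.
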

\begin{proof}The condition $4M^2ab>4Ma-1$ is equivalent to 
$4Ma(1-Mb)<1$,
which is equivalent to the desired inequality.\end{proof}

\begin{proof}[Proof of Proposition~\ref{tail-eq0}]
We set
\[A_0'=-\log(\sqrt{4M}\exp\left(-\sigma(\overline b,\underline b)\right) ).\]
Since $(b_k)_{k\geq1}$ is admissible, 
Lemma~\ref{calculus} gives $A_0'>0$.
Let $n\geq3$. From Proposition~\ref{character}, for each $W\in\mathcal P$ 
    with $R(W)=n+1$ there exists a unique
    element $[v_1]\cdots [v_{n}]$ of the set $P^*(n)$ in Lemma~\ref{path-number} such that   
     $T_{1,j}(W)\subset v_j$ for $1\leq j\leq n$. 
Combining Lemmas~\ref{path-number} and 
\ref{area} we have
  \[
  \begin{split}m(R=n+1)&=\sum_{\substack{W\in\mathcal P\\ R(W)=n+1} }m(W)\leq\#P^*(n)\exp\left(-\sigma(\overline b,\underline b) (n-1)\right)\\
  &\leq \frac{2M}{n}\begin{pmatrix}n+1\\
  \lfloor\frac{n+2}{2}\rfloor\end{pmatrix}M^{\lfloor\frac{n}{2}\rfloor}
 \exp\left(-\sigma(\overline b,\underline b) (n-1)\right)\\&\leq n^{-\frac{3}{2}}\exp(-A_0'n),\end{split}\]
provided $n\geq n(M)$ where $n(M)$ is a sufficiently large integer depending only on $M$.
    To deduce the last inequality we have evaluated the binomial coefficient using Stirling's formula for factorials.
    Further, setting $A_0=A_0'/2$ 
    and $C=m(R\leq n(M))\exp(n(M)A_0)$
    we obtain the desired inequality for all $n\geq1$.
  \end{proof}

\subsection{Proof of Proposition~\ref{induce-prop}}\label{pf-sec}
Let $(a_k)_{k\in\mathbb Z}$ be a two-sided sequence of parameters in $(0,\frac{1}{2M})$
satisfying \eqref{eq:param_range-z}. Then,
for each $k\in\mathbb Z$  the one-sided sequence $(b_j)_{j\geq1}$ given by 
$b_j=a_{j+k-1}$ for $j\geq1$
is admissible. We apply the construction in Section~\ref{ind-exp-sec} to the corresponding sequence
 $(T_{b_j})_{j\geq 1}$ of heterochaos baker maps and set
  $\mathcal{P}_k=\mathcal{P}$,
$R_k=R$.

Regarding $(\mathcal P_k,R_k)_{k\in\mathbb Z}$,
 items (a), (b) are obvious from the construction.
Item (c) with $\lambda=\min\left\{\frac{1}{\overline a},\frac{1}{1-M\underline a}\right\}$ follows from \eqref{lambda} and \eqref{del4}.
 Item (d) follows \eqref{del5}.
Item (e) follows from
 Lemma~\ref{m-23} and the admissibility of $(b_j)_{j\geq1}$. Item
(f) follows from Proposition~\ref{tail-eq0}. 
The proof of Proposition~\ref{induce-prop} is complete. \qed

\begin{remark}\label{gen-rem}
Proposition~\ref{induce-prop} can be slightly generalized.
Instead of the two-sided sequence of parameters in the mostly expanding center regime $(0,\frac{1}{2M})$, one can consider any sequence $(a_k)_{k\in\mathbb Z}$ of parameters in the whole parameter space $(0,\frac{1}{M})$ for which there exists a subsequence $(a_{k(n)})_{n\in\mathbb Z}$ satisfying 
$a_k\in [\inf_{n\in\mathbb Z}a_{k(n)},\sup_{n\in\mathbb Z}a_{k(n)}]$ for all $k\leq0$, 
\[\max\left\{0,\frac{4M\sup_{n\in\mathbb Z}a_{k(n)}-1}{4M^2\sup_{n\in\mathbb Z}a_{k(n)}}\right\}
 <
 \inf_{n\in\mathbb Z}a_{k(n)}\leq \sup_{n\in\mathbb Z}a_{k(n)}<\frac{1}{2M},\]
 and 
\[\lim_{N\to\infty}\frac{1}{N}\#\left\{1\leq k\leq N\colon a_k\notin[\inf_{n\in\mathbb Z}a_{k(n)},\sup_{n\in\mathbb Z}a_{k(n)}]\right\}=0.\]
In other words, the zero-density appearance of mostly contracting or neutral center parameters is allowed. The main results presented in Section~\ref{sec:results} can be slightly generalized accordingly.
\end{remark}

\section{Stretched exponential loss of memory for nonstationary 
nonuniformly expanding maps}\label{sec:memory_loss}

In this section we consider an abstract class of nonstationary nonuniformly expanding maps admitting a suitable Gibbs--Markov structure. By Proposition~\ref{induce-prop}, this setting applies in particular to nonstationary compositions of heterochaos baker maps in the parameter range \eqref{eq:param_range-z}. We begin by describing the abstract setting, which is similar to that of \cite{KL21}, except that inducing for the nonstationary dynamics is based on general stopping times rather than first return times, in accordance with Proposition~\ref{induce-prop}. By suitably extending the dynamics and adapting the coupling method of \cite{KKM19,KL21}, we derive a stretched exponential rate of memory loss. In Section~\ref{sec:ml_hcb}, we apply this result to the heterochaos baker maps to prove the exponential memory loss stated in Theorem~\ref{thm:main-ml}.

\subsection{Nonstationary nonuniformly expanding maps}\label{sec:nnue}
Let $(X,d)$ be a metric space with
	\[
	\mathfrak{d} = \diam(X) < \infty.
	\]
	We endow $X$ with the Borel sigma-algebra $\mathcal{B}$.
	Let $m$ be a probability measure on $X$.
	We consider a two-sided sequence $(T_k)_{k \in \bZ}$ of 
	measurable 
	transformations $T_k \colon X \to X$ and denote 
	\begin{align*}
	T_{ k, \ell } = \begin{cases}
		T_\ell \circ \cdots \circ T_k, &\text{if $k \le \ell$,} \\
		\text{id}_X, &\text{if $k > \ell$.}
	\end{cases}
	\end{align*}	
	
	Let $Y \subset X$ be a measurable subset with $m(Y) = 1$.
	For a map $\rho \colon Y \to [0, \infty)$, we denote by $| \rho |_{\LL}$ 
	the Lipschitz seminorm of the logarithm of $\rho$:
	\[
	|\rho |_{\LL} = \sup_{\substack{y,y'\in Y\\ y \neq y'}
    } \frac{| \log \rho(y) - \log \rho(y')  |}{ d(y,y') },
	\]
	where 
    $\log 0 = - \infty$ and $\log 0 - \log 0 = 0$ by convention. In the sequel, for 
	a nonnegative measure $\mu$ on $Y$ that is absolutely continuous with respect to $m$, we often
	write $|\mu|_{\LL}$ for $|d \mu / d m|_{\LL}$, 
	with the convention that a density of $\mu$ has been fixed.

    The following assumptions provide an abstract formulation of the
Gibbs--Markov structure established in Proposition~\ref{induce-prop}.
	
    \smallskip

	
	\noindent\textbf{Assumptions.} 
	There exist a sequence 
	$(\cP_k, R_k)_{k \in \bZ}$ and real numbers $\lambda > 1$, 
	$u \in (0,1]$,
	$A, A' > 0$, $K > 0$, 
	such that the following hold for each $k \in \bZ$:
	
    \begin{itemize}
    \item[(A1)]$\cP_k$ is a 
    countable measurable partition of 
    a subset $X_k = \bigcup_{W \in \cP_k} W$ of $Y$ such that
    $m(W) > 0$ for all $W \in \cP_k$ and $m(Y\setminus X_k)=0$. \smallskip 
    \item[(A2)] $R_k \colon X \to [1, \infty]$ is a function that is constant on
    each $W \in \cP_k$ with value $R_k(W) \in \{ 1, 2, \ldots \}$. \smallskip 
    \item[(A3)] For every $W \in \cP_k$, the map $F_W = T_{k, k + R_k(W) - 1} |_W \colon W \to Y$ is a 
    measurable 
    bijection with a measurable inverse, and $T_{k, k + j -1}(W) \in \cB$ whenever $0 \le j < R_k(W)$.
    Further,
    for all $x,x' \in W$,
    \[
    d(F_W(x), F_W(x')) \geq \lambda d(x,x'),
    \]
    and $F_W$ is nonsingular with log-Lipschitz Jacobian with respect to $m$:
    \[
    \zeta = \frac{d (F_W)_*( m |_W )}{dm}
    \quad \text{satisfies} \quad
    |\zeta|_{\LL} \leq K
    .
    \]
    \item[(A4)] For all $x,x' \in W$, with $W \in \cP_k$ and $F_W = T_{k, k + R_k(W) -  1} |_W$ as in (A3),
    \[
    \max_{0 \leq j \leq R_k(W)} d(T_{k,k+j-1}(x), T_{k,k+j-1}(x'))
    \leq K d(F_W(x), F_W(x'))
    .
    \]
    \item[(A5)] There exist $\delta_\# > 0$, an integer $N \ge 1$ 
    and coprime integers $n_1,\ldots, n_N \ge 1$, all independent of $k$, such that
    for all $1 \le j \le N$, 
    \[
    m(  R_k = n_j ) \ge \delta_\#.
    \]
    \item[(A6)] For all $n \ge 1$,
    \[
    h^k(n):=
    m( R_k \ge n ) \le A \exp( - A'  n^u ).
    \]
    \end{itemize}
We do not require $R_k$ to be a first return time to a proper subset of $X$.

    \subsection{Extended dynamics}\label{sec:ext}
	Let $(T_k)_{k \in \bZ}$ be a sequence of maps on $X$ satisfying 
	(A1)-(A6) in Section~\ref{sec:nnue}.
	For each $k \in \bZ$, we define the state space 
	$\bar{X}_k$ as a disjoint union (direct sum)
	\[
	\bar{X}_k = \bar{Y} \sqcup \bigsqcup_{i \ge 1} \bigsqcup_{\substack{W \in \cP_{k - i} \\ R_{k - i}(W) \ge i + 1} } \bar{W}_{k, i},
	\]
	where $\bar{Y} = Y \times \{0\}$ and $\bar{W}_{k,i}$ is a disjoint copy of
	\[
	\{  ( T_{k-i, k-1} (x), R_{k-i} (W) - i) \colon x \in W  \}.
	\]
    For convenience, we use the same notation 
    $(x, r)$
    for a point in
    $\bar{W}_{k,i}$ and its corresponding point in
    $X \times \mathbb Z_+$, as no ambiguity can arise.
	Informally, we consider $\bar{Y}$ as the \say{base} of a tower 
	and $\bar{W}_{k, i}$ as an excursion from the base. 
    Starting from $\bar{W}_{k, i}$, under the 
    time-dependent dynamics 
	$T_k, T_{k+1}, \ldots$, the next return to $\bar{Y}$
    occurs exactly after $R_{k-i}(W) - i$ iterations.
	
	On $\bar{X}_k$, we define a metric $\bar{d}$ as follows:
	\begin{align*}
		\bar{d}(  (x, j ) , (x', j') ) = \begin{cases}
			d(x, x'), &\text{if $(x, j), (x', j') \in \bar{W}_{k,i}$ or $(x,j), (x', j') \in \bar{Y}$}, \\
			\mathfrak{d} + 1, &\text{otherwise.}
		\end{cases}
	\end{align*}
	We equip $\bar{X}_k$ with its Borel sigma-algebra $\bar{\mathcal{B}}_k$. Then the trace 
	sigma-algebras $\bar{\mathcal{B}}_k |_{ \bar{W}_{k, i} }$ and $\bar{ \cB }_k |_{ \bar{Y} }$ are given by 
	\begin{align*}
	\bar{\mathcal{B}}_k |_{ \bar{W}_{k, i} } &= \{  \bar{W}_{k,i}  \cap ( B \times \{   R_{k-i}(W)  - i  \} ) \colon 
	B \in \mathcal{B}  \}, \\ 
	\bar{ \cB }_k |_{ \bar{Y} } &= \{ B \times \{0\} \colon B \in \cB |_Y  \}.
	\end{align*}
	We extend the measure $m$ to a probability measure $\bar{m}$ 	
	on $\bar{Y}$ by 
	\[\bar m( B \times \{ 0 \}  ) = m(B) \quad \text{for $B \in \cB |_Y$.}\]
	For any $k \in \bZ$, we regard $\bar{m}$ as a measure on $\bar{X}_k$ supported on $\bar{Y}$.
	We also extend the definition $| \cdot |_{\LL}$ to functions on $\bar{Y}$ in the obvious way, and write 
	$| \mu |_{\LL} =  |d \mu / d \bar{m} |_{\LL}$ for a nonnegative measure $\mu$ on $\bar{Y}$.
	
	Denoting $\bar{W}_0 = W \times \{0\}$ for $W\in \cP_k$, we extend the partition $\cP_k$ by 
	\begin{align}\label{eq:part_bar}
	\bar{\cP}_k = \{  \bar{W}_0 \colon   W \in \cP_k  \} \cup \{  \bar{W}_{k, i} \colon i \ge 1, \: W \in \cP_{k - i}, \: R_{k-i}(W) \ge i + 1   \}.
	\end{align}
	This is a measurable partition of $\bar X_k$ modulo a subset of $\bar Y$ with 
	$\bar m$-measure zero. 
	
	Finally, we extend the dynamics by defining $\bar{T}_k : \bar{X}_k \to \bar{X}_{k+1}$ as follows:
	\begin{align}\label{eq:ext_dyn}
		\bar{T}_k(x,j) = \begin{cases}
			( T_k(x), j - 1 ), & j \ge 1, \\
			( T_k(x), R_{k}(x) - 1  ), & j = 0, \: x \in X_k.
		\end{cases}
	\end{align}
    Thus, if $(x,j)\in\bar{W}_{k,i}$ with $j\ge 2$, then $\bar{T}_k(x,j)$ belongs to $\bar{W}_{k+1,i+1}$, with the remaining return time decreased by one. If $j=1$, then $\bar{T}_k(x,j) \in \bar{Y}$, so that the trajectory returns to the base. On the other hand, if $(x,0) \in \bar{Y}$ with $x \in W$ for some $W \in \cP_k$, then $\bar{T}_k(x,0)$ either belongs to $\bar{W}_{k+1,1}$ if $R_k(W) \ge 2$, or belongs to $\bar{Y}$ if $R_k(W)=1$.

	If $(x,0) \in \bar N_k := \bar Y \setminus ( X_k \times \{ 0 \} )$ we set $\bar{T}_k(x,0) = (y_*,0)$ where $y_* \in Y$ is a fixed reference point. Since we are assuming that $T_{k, k + i - 1} ( W ) \in \cB$ for 
	$W \in \cP_k$ and $0 \le i \le R_k(W)$, it is straightforward to verify that $\bar{T}_k$ is 
    a measurable map between
	$\bar{ \mathcal{B} }_k$ and $\bar{ \mathcal{B} }_{k+1}$.
	
	For $(x,j) \in \bar{X}_k$, we define 
	$$
	\tau_k( x, j ) = \inf \{  n \ge 1 \colon  \bar{T}_{ k, k + n - 1 }(x, j) \in  \bar{Y}  \}.
	$$
	Note that, for $(x,j) \in \bar X_k \setminus \bar N_k$, we have 
	\begin{align*}
		\tau_k( x,j ) = \begin{cases}
			R_k(x), &\text{if $j = 0$,} \\
			j, &\text{if $j > 0$.}
		\end{cases}
	\end{align*}
    In particular, the first return time $\tau_k$ coincides with $R_k$ on the base 
    $\bar{Y}$.
	Let 
	\[
	\bar{h}^k(n) = \bar{m}( \tau_k \ge n ) = m( R_k \ge n ).
	\]
	Then, (A6) implies that 
	\begin{align}\label{eq:rel_hk_rk}
		\bar{h}^k(n) \le A \exp( - A' n^u  ).
	\end{align}

	\subsection{Regular measures}
	For $\bar{W} \in \bar \cP_k$, set 
	\[
	\bar{F}_{ \bar{W} } = \bar{T}_{k,k+\tau_k( \bar{W} )-1}|_{ \bar W }.
	\]	
    From (A3) it follows that
    $\bar{F}_{ \bar{W} } \colon (x,j) \in \bar{W} \mapsto \bar{F}_{ \bar{W} }(x,j)\in\bar{Y}$ is a measurable bijection onto $\bar{Y}$ with measurable inverse.
	
	\begin{prop}
		\label{prop:K}
        Let 
        \begin{align}\label{eq:k1k2}
		K_1 = K + \lambda^{-1} K_2 \quad \text{and} \quad K_2 > (1 - \lambda^{-1})^{-1} K.
		\end{align}
        Then $K < K_1 < K_2$ and, for each $k \in \bZ$ and
		for each nonnegative measure $\mu$ on $\bar Y$
		with $|\mu|_{\LL} \leq K_2$,
		\[
		\bigl| (  \bar{F}_{\bar W_0 } )_* (\mu|_{\bar W_0 }) \bigr|_{\LL} \leq K_1
		,
		\]
		whenever $\bar W_0 = W \times \{ 0 \} \in \bar \cP_k$.
	\end{prop}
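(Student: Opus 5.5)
First, the inequalities $K<K_1<K_2$ are elementary. Since $\lambda>1$, the assumption $K_2>(1-\lambda^{-1})^{-1}K$ is equivalent to $K_2-\lambda^{-1}K_2>K$, that is, $K_1=K+\lambda^{-1}K_2<K_2$. The inequality $K<K_1$ holds because $\lambda^{-1}K_2>0$; note $K_2>0$, since $K>0$ forces $K_2>K>0$.

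For the main bound, I would reduce everything to the base dynamics. On $\bar W_0=W\times\{0\}$ with $W\in\cP_k$, the extended map $\bar F_{\bar W_0}$ is $\bar T_{k,k+R_k(W)-1}$, because $\tau_k=R_k$ on the base. By \eqref{eq:ext_dyn} it acts as $(x,0)\mapsto (F_W(x),0)$, where $F_W=T_{k,k+R_k(W)-1}|_W$. Under the identification of $\bar Y$ with $Y$ and of $\bar m$ with $m$, it therefore suffices to bound the log-Lipschitz seminorm of $(F_W)_*(\mu|_W)$ for a measure $\mu$ on $Y$ with density $\rho$ satisfying $|\rho|_{\LL}\le K_2$.

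The next step is the change of variables. By (A3), $F_W\colon W\to Y$ is a measurable bijection with measurable inverse, and it is nonsingular. Writing $\zeta=d(F_W)_*(m|_W)/dm$, the density of $(F_W)_*(\mu|_W)$ at $y\in Y$ is
\[
\frac{d(F_W)_*(\mu|_W)}{dm}(y)=\rho(F_W^{-1}y)\,\zeta(y).
\]
To justify this, push forward $\rho\,dm|_W$ by the bijection: for a Borel set $B$, $\int_{F_W^{-1}B}\rho\,dm$ equals $\int_B \rho\circ F_W^{-1}\, d(F_W)_*(m|_W)$. Taking logarithms, for $y,y'\in Y$,
\[
|\log(\rho\circ F_W^{-1}\,\zeta)(y)-\log(\rho\circ F_W^{-1}\,\zeta)(y')|\le |\rho|_{\LL}\, d(F_W^{-1}y,F_W^{-1}y')+|\zeta|_{\LL}\,d(y,y').
\]
The expansion property in (A3) gives $d(F_W^{-1}y,F_W^{-1}y')\le\lambda^{-1}d(y,y')$. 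Combined with $|\zeta|_{\LL}\le K$, the right-hand side is at most $(\lambda^{-1}K_2+K)\,d(y,y')=K_1\,d(y,y')$, which is the claim.

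The main obstacle is handling the conventions where $\rho$ vanishes, since then $\log 0=-\infty$ and the convention $\log0-\log0=0$ must be used. Here $|\rho|_{\LL}<\infty$ forces $\rho$ to be either identically zero or strictly positive on $Y$; the same dichotomy holds for $\zeta$. In the zero case the pushforward is zero and its seminorm is $0\le K_1$, so the estimate is trivial. In the positive case all logarithms are finite and the computation above goes through. A secondary point is that densities are only defined up to null sets. I would fix the version $\rho\circ F_W^{-1}\,\zeta$ as the density of the pushforward, so that the seminorm is evaluated pointwise on all of $Y$.
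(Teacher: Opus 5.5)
Your proof is correct and follows essentially the same route as the paper: you write the density of the pushforward as $\rho\circ F_W^{-1}\cdot\zeta$ and bound its log-Lipschitz seminorm by $\lambda^{-1}|\mu|_{\LL}+|\zeta|_{\LL}\le\lambda^{-1}K_2+K=K_1$, using the expansion in (A3). Your explicit check of $K<K_1<K_2$ and your handling of the zero-density convention fill in details that the paper leaves implicit.
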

	
	\begin{proof} The proof is identical to \cite[Proposition~3.1]{KKM19}, but we provide 
	the details below for the reader's convenience.
		For $\bar W_0 = W \times \{ 0 \} \in \bar\cP_k$, we have 
		\begin{align}\label{eq:pw_density}
			v := \frac{ d ( \bar{F}_{ \bar W_0 })_* ( \mu |_{\bar W_0 }) }{d\bar{m} } = \rho( \bar{F}_{\bar W_0}^{-1}  ) \cdot  \bar{\zeta},
		\end{align}
		where
		\begin{align*}
			\rho = \frac{ d \mu  }{d \bar m } \quad \text{and} \quad 
			\bar{\zeta}(y,0) = \frac{ d( \bar{F}_{ \bar W_0 })_*( \bar{m} |_{\bar W_0 } ) }{ d \bar{m}  } (y,0)
			= \frac{ d(F_{W})_*(m|_{ W }) }{dm}(y).
		\end{align*} 
		 Thus, for $(y,0), (y', 0) \in \bar Y$,
		\begin{align*}
			&|\log v(y,0) - \log v(y',0) | \\
			&\le | \log \rho( \bar F_{\bar W_0}^{-1}(y,0) ) - \log \rho( \bar F_{\bar W_0 }^{-1}(y',0) ) | + |  \log  \bar \zeta( y,0 ) - 
			\log \bar  \zeta( y',0 ) | \\
			&\le (  |\mu|_{\LL} \lambda^{-1}  + |\zeta|_{\LL}  ) d( y, y' ) \\
            &= (  |\mu|_{\LL} \lambda^{-1}  + |\zeta|_{\LL}  ) \bar{d}( (y, 0), (y',0) ).
		\end{align*}
		where (A3) was used in the second inequality. The desired inequality follows
        from $|\mu|_{\LL} \leq K_2$ and \eqref{eq:k1k2}.
	\end{proof}

    The following definition is a minor variation of \cite[Definition 3.5]{KL21}.
	
	\begin{definition} \label{def:reg}
		Fix $K_1 < K_2$ as in Proposition~\ref{prop:K}.
		For $k \in \bZ$ and $\ell \ge 1$, define
		$$
		B_{k,\ell}
		= \{   (x,j) \in \bar{X}_k \colon \tau_k(x,j) = \ell  \}.
		$$
		A nonnegative measure $\mu$ on $\bar{X}_k$ is called {\it regular}
		if 
		$
		\mu( \bar{N}_k ) = 0
		$
		and
		for every $\ell \geq 1$,
		\begin{align}\label{eq:regular}
			\bigl| ( \bar{T}_{k, k + \ell - 1} )_* (\mu|_{ B_{k, \ell} }) \bigr|_{\LL}
			\leq K_1.
		\end{align}
		Given  $r \colon \{1,2, \ldots\} \to [0,\infty)$,
		we say that $\mu$ has {\it tail bound} $r$ 
		if for all $n \geq 1$, 
		\begin{align}\label{eq:tail_bound}
			\mu \bigl(    \tau_k \geq n   \bigr)
			\leq r( n )
			.
		\end{align}
	\end{definition}

Throughout the rest of the paper, we only consider finite measures.
	As in \cite[Proposition~4.1]{KL25}, regular measures satisfy the following 
    basic 
    properties. 
    
	\begin{prop}\label{prop:regular} Let $k \in \bZ$.
		\begin{itemize}
			\item[(a)] If $\{\mu_j\}$ is a finite or countable collection of regular
			measures on $\bar{X}_k$, 
			then $\mu = \sum_{j} \mu_j$ is regular.  \smallskip
			\item[(b)] The measure $\bar m$ (as a measure on $\bar{X}_k$) is regular with tail bound $r = \bar h^k$ 
			and every measure $\mu$ on $\bar Y \subset \bar{X}_k$ with $|\mu|_{\LL} \leq K_2$ is a regular measure on $\bar{X}_k$ with tail bound
			$r  \le 
            \mu( \bar Y ) e^{ \mathfrak{d}  K_2} \bar h^k$. \smallskip
			\item[(c)] 	If $\mu$ is a regular measure on $\bar{X}_k$,  then 
			$(\bar{T}_{k, k + n -1})_*\mu$ 
			and $( (\bar{T}_{k, k + n -1})_*\mu)_{  \bar{X}_{k+n} \setminus \bar Y }$
			are both regular measures on $\bar{X}_{k + n}$.
			Moreover, for all $n \ge 1$,
			\[
			\Bigl| \bigl( (\bar{T}_{k, k +  n - 1})_* \mu \bigr) |_{ \bar Y }  \Bigr|_{\LL}
			\leq K_1
			.
			\]
		\end{itemize}
	\end{prop}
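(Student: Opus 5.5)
Part (a) is immediate from the log-Lipschitz definition. If each $\mu_j$ is regular, then $\mu(\bar N_k)=\sum_j\mu_j(\bar N_k)=0$. Also $(\bar T_{k,k+\ell-1})_*(\mu|_{B_{k,\ell}})=\sum_j(\bar T_{k,k+\ell-1})_*(\mu_j|_{B_{k,\ell}})$. The densities $\rho_j$ of the summands satisfy $\rho_j(y)\le e^{K_1\bar d(y,y')}\rho_j(y')$, and this inequality is preserved under (countable) sums. Hence $|\sum_j\rho_j|_{\LL}\le K_1$; zero densities are handled by the convention $\log0-\log0=0$.

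For (b), let $\mu$ live on $\bar Y$ with $|\mu|_{\LL}\le K_2$. Since $\bar m(\bar N_k)=0$ and $\mu\ll\bar m$, we get $\mu(\bar N_k)=0$. On $\bar Y\setminus\bar N_k$ we have $\tau_k=R_k$ and $B_{k,\ell}\cap\bar Y=\bigcup\{\bar W_0:W\in\cP_k,R_k(W)=\ell\}$. Therefore $(\bar T_{k,k+\ell-1})_*(\mu|_{B_{k,\ell}})=\sum_{R_k(W)=\ell}(\bar F_{\bar W_0})_*(\mu|_{\bar W_0})$. Each summand has $|\cdot|_{\LL}\le K_1$ by Proposition~\ref{prop:K}, so (a) yields regularity. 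The measure $\bar m$ has density $1$ and is the special case.

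For the tail bound in (b), the density $\rho$ of $\mu$ satisfies $\rho(y)\le e^{K_2\mathfrak d}\rho(y')$ for all $y,y'$. Averaging over $y'$ gives $\rho\le e^{\mathfrak dK_2}\mu(\bar Y)$. Hence $\mu(\tau_k\ge n)\le \mu(\bar Y)e^{\mathfrak dK_2}\bar m(\tau_k\ge n)=\mu(\bar Y)e^{\mathfrak dK_2}\bar h^k(n)$.

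For (c), it suffices by induction to treat $n=1$ and to understand $\nu=(\bar T_k)_*\mu$ on $\bar X_{k+1}$. First, $\nu(\bar N_{k+1})=0$. The part of $\nu$ on $\bar Y$ is $(\bar T_k)_*(\mu|_{B_{k,1}})$ and has density of $\LL$-seminorm $\le K_1$ directly by regularity of $\mu$. This density is in particular $\ll\bar m$, which is the key point. This also gives the final claim of (c) after iterating.

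Next take $\ell\ge1$. The set $B_{k+1,\ell}$ consists of points whose $\bar T_k$-preimages lie in $B_{k,\ell+1}$, together with possibly points of $\bar Y$ with return time $\ell$. So
\[(\bar T_{k+1,k+\ell})_*(\nu|_{B_{k+1,\ell}})=(\bar T_{k,k+\ell})_*(\mu|_{B_{k,\ell+1}})+(\bar T_{k+1,k+\ell})_*\bigl((\nu|_{\bar Y})|_{B_{k+1,\ell}}\bigr).\]
The first term is controlled by regularity of $\mu$. For the second term, $\nu|_{\bar Y}$ has $|\cdot|_{\LL}\le K_1\le K_2$, so it is regular by (b), and its push-forward on $B_{k+1,\ell}$ has seminorm $\le K_1$. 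Part (a) then shows $\nu$ is regular.

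The restriction $\nu|_{\bar X_{k+1}\setminus\bar Y}$ is handled the same way with only the first term present, so it is regular too.

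The main obstacle is the bookkeeping that $\bar T_k$ maps $B_{k,\ell+1}\setminus\bar Y$ onto excursion pieces of $\bar X_{k+1}$ with $\tau_{k+1}=\ell$, and maps $B_{k,1}$ into $\bar Y$. This must be checked from \eqref{eq:ext_dyn} in both cases $j\ge1$ and $j=0$. In the case $j=0$, $\tau_k=R_k$, so a base point with $R_k=\ell+1\ge2$ goes to $\bar W_{k+1,1}$ with remaining time $\ell$.
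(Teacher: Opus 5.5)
Your proof is correct and follows essentially the same route as the paper. In (a) you use the sum inequality for the log-Lipschitz seminorm. In (b) you use Proposition~\ref{prop:K} together with (a), plus the density bound $d\mu/d\bar m\le e^{\mathfrak d K_2}\mu(\bar Y)$. In (c) you reduce to $n=1$ and split $(\bar T_{k+1,k+\ell})_*(\nu|_{B_{k+1,\ell}})$ into the same two pieces as the paper, namely $(\bar T_{k,k+\ell})_*(\mu|_{B_{k,\ell+1}})$ and the contribution of $\nu|_{\bar Y}$, which (b) shows is regular.
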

	
	\begin{proof} 
		
		\textbf{(a)} The claim follows from the fact that if $\nu = \sum_{j} \nu_j$, where
		$\{\nu_j\}$ is a countable 
        collection of nonnegative measures on $\bar Y$,
		then
		\begin{align}\label{eq:sum_ll}
			| \nu |_{\LL} \le \sup_{j} | \nu_j|_{\LL}.
		\end{align}

        \smallskip 
		
		\noindent \textbf{(b)} Regularity of $\bar m $ and $\mu$ follow 
		by combining Proposition~\ref{prop:K} and part (a), and the tail bound 
		of $\mu$ is a consequence of the standard estimate 
        \[
        \frac{d\mu}{d \bar{m} } \le 
        \inf_{ y \in \bar{Y} } \frac{d\mu}{d \bar{m} } (y)  e^{ \mathfrak{d}  K_2} \bar h^k
        \le \mu( \bar Y ) e^{ \mathfrak{d}  K_2} \bar h^k.
        \]
        
        \smallskip
        
		\noindent \textbf{(c)} It suffices to prove the case $n = 1$. Let $\mu$ be a regular measure on $\bar{X}_k$.
		Since $\mu(\bar N_k) = 0$, we have 
		$$
		( (\bar{T}_k )_* \mu ) |_{ \bar Y } = ( \bar{T}_k)_*( \mu |_{  B_{k,1}  } ).
		$$
		Therefore, by the definition of regularity,
		$$
		| ( (\bar{T}_k )_* \mu ) |_{ \bar Y }  |_{\LL} \le K_1.
		$$
        In particular, $( (\bar{T}_k )_* \mu ) ( \bar{N}_{k + 1} ) = 0$ because 
        $\bar{m}( \bar{N}_{k + 1} ) = 0$.
        
		Next, we verify that $(\bar{T}_k)_*\mu$ is a regular measure 
		on $\bar{X}_{k+1}$.
		To this end, let $\ell \geq 1$, and denote 
		$\mu' = ((\bar{T}_k)_* \mu) |_{\bar Y}$. We decompose 
		\begin{align*}
			( \bar{T}_{ k + 1, k + 1  + \ell - 1  } )_* \bigl(  ( (\bar{T}_k)_*\mu ) |_{B_{k+1, \ell}}  \bigr) = \mu_1 + \mu_2,
		\end{align*}
		where 
		\begin{align*}
			\mu_1 = ( \bar{T}_{k+1, k+\ell} )_*[ \mu' |_{ B_{k+1, \ell } } ] \quad \text{and} \quad 
			\mu_2 = ( \bar{T}_{k, k + \ell } )_*[  \mu |_{ B_{k, \ell + 1} } ].
		\end{align*}
		Since $|\mu'|_{\LL} \le K_1$, it follows from part (b) that $\mu'$ is a regular measure on $\bar{X}_{k + 1}$.
		Therefore, 
		$|\mu_1|_{\LL} \le K_1$. Moreover, 
		by the regularity of 
		$\mu$ we have $|\mu_2|_{\LL} \le K_1$. From \eqref{eq:sum_ll} we now obtain
		\[
		|( \bar{T}_{ k + 1, k + 1  + \ell - 1  } )_* \bigl(  ( (\bar{T}_k)_*\mu ) |_{B_{k+1, \ell}}  \bigr) |_{\LL} \le K_1.
		\]
		Finally, note that the regularity of $( (\bar{T}_{k})_*\mu)_{  \bar{X}_{k + 1} \setminus \bar Y }$ follows 
		from $|\mu_2|_{\LL} \le K_1$.
	\end{proof}

    \subsection{Loss of memory}

    Let $( \bar{T}_k )_{k \in \bZ}$ be a sequence of maps 
    as in Section~\ref{sec:ext}. The proof of the following result is similar to 
    \cite[Theorem~3.8]{KL21}. We provide the details of the proof in
    Appendix~\ref{sec:app_a}.

	
	\begin{thm}\label{thm:decdec}
        Let $k \in \bZ$.
		Suppose that $\mu$ is a regular 
        probability 
        measure on $\bar{X}_k$ with tail bound $r$. Then there exists a decomposition 
		\[
		\mu = \sum_{n=1}^\infty \alpha_{n} \mu_{n},
		\]
		where $\mu_{n}$ are probability measures on $\bar{X}_k$
		such that $( \bar{T}_{k, k + n - 1 })_* \mu_n = \bar m$ for each $n \geq 1$, and $\alpha_n$ are nonnegative 
        constants
		with $\sum_{n \geq 1} \alpha_n = 1$. The sequence $(\alpha_n)_{n\geq1}$
		is fully determined by $K_1$, $K_2$, the system constants ($\mathfrak{d}$, $K$, $\lambda$, $\delta_\#$, $N$, $n_1,\ldots, n_N$), and
		the functions $r$ and $(h^j)_{j \ge k}$ in (A6).
		In particular, $(\alpha_n)_{n\geq1}$ does not depend on $\mu$
		in any other way. Moreover, assume that 
		\begin{align}\label{eq:rn}
		r(n) \le C_r \exp( - C'_r n^u ), \quad n \ge 1.
		\end{align}
		Then, 
		\[
		\sum_{j \geq n} \alpha_j
		\leq C \exp(-C' n^u ), \quad n \ge 1.
		\]
		The constants $C, C'$ depend only on 
		$u, A, A', C_r, C_r', K_1, K_2$, and the system constants.
	\end{thm}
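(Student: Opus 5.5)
We will run a coupling-type decomposition along the extended nonstationary tower, following \cite[Theorem~3.8]{KL21} and \cite{KKM19}. The engine is a uniform ``rebirth'' mechanism. Let $\nu$ be a regular measure on $\bar X_j$ with $\nu(\bar X_j)$ not too small, for any $j \ge k$. After a bounded number of steps $n_* $, the part of $(\bar T_{j,j+n_*-1})_*\nu$ that lands in $\bar Y$ should dominate a fixed multiple $c\,\bar m$. Here $n_*$ and $c>0$ depend only on the system constants.

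\textbf{Step 1 (uniform mass in the base and a lower density bound).} By Proposition~\ref{prop:regular}(c), each pushforward $(\bar T_{k,k+n-1})_*\mu$ restricted to $\bar Y$ has $|\cdot|_{\LL}\le K_1$. Its density on $\bar Y$ is therefore bounded below by $e^{-\mathfrak d K_1}$ times its mass. Using (A5) with coprime return times $n_1,\dots,n_N$ and $m(R_j=n_i)\ge\delta_\#$ uniformly in $j$, a standard renewal/aperiodicity argument applies (cf.\ the proof in \cite{KKM19}). It shows that for some $n_*$, a fixed fraction of the base mass at time $j$ returns to $\bar Y$ at time exactly $j+n_*$, with density $\ge c'$ times that mass. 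Uniformity in $j$ is where (A5) being independent of $k$ is essential.

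\textbf{Step 2 (subtracting and iterating).} At time $n_*$ we subtract $\epsilon\,\bar m$ from the base part. This is legal since the density is at least $c'$ and $\log$-Lipschitz with constant $K_1<K_2$, so the remainder still has $|\cdot|_{\LL}\le K_2$ and stays regular by Proposition~\ref{prop:regular}(b). We set $\alpha_{n_*}\mu_{n_*}$ equal to the pullback portion, namely the restriction of $\mu$ with weight $\epsilon\bar m/(\text{density})$ composed back along the orbit. Then $(\bar T_{k,k+n_*-1})_*\mu_{n_*}=\bar m$.

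Measure not yet in the base is not subtracted but simply pushed forward. Once it returns to $\bar Y$ it is regular with the $K_1$ bound, so it can be treated at later subtraction times. Repeating this gives $\mu=\sum_n\alpha_n\mu_n$. The $\alpha_n$ are determined solely by the masses, which evolve according to $K_1,K_2$, the constants in (A1)--(A6), the tails $r$ and $h^j$ and the fixed fractions. They do not depend on $\mu$ otherwise, exactly as in \cite{KL21}.

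\textbf{Step 3 (tail estimate, main obstacle).} We need $\sum_{j\ge n}\alpha_j\le Ce^{-C'n^u}$. The remaining mass at time $n$ is bounded by the mass of a nonstationary renewal-type process. At each return to the base, a fixed proportion is removed, and excursions have tails controlled by \eqref{eq:rn} for the initial piece and by (A6) for subsequent pieces.

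To estimate it we will split according to the number of returns to $\bar Y$ before time $n$. If there are at least $\delta n^u$ returns, the surviving mass decays geometrically, like $(1-\epsilon)^{\delta n^u}$. Otherwise some excursion has length $\gtrsim n^{1-u}$. A union bound over excursions combined with the stretched exponential tails gives a bound $e^{-C'n^u}$, using subadditivity $(a+b)^u\le a^u+b^u$ to handle convolutions of stretched exponential tails.

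The delicate point is the convolution in the nonstationary setting, where the tail functions $h^j$ change with $j$. We only have the uniform bound $A e^{-A'n^u}$. So we will work with the dominating sequence $\bar h(n)=\min\{1,Ae^{-A'n^u}\}$ together with a generating-function or induction bound on $\bar h^{*m}$ of the form $C_0^m e^{-A''n^u}$. This requires choosing $A''<A'$ and balancing $C_0^m$ against the geometric factor $(1-\epsilon)^m$.
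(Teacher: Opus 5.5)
Your architecture is the one in Appendix~\ref{sec:app_a}:
\begin{itemize}
\item a uniform lower bound on mass returning to $\bar Y$ after $n_0$ steps, from (A5) and the uniform tails (Proposition~\ref{prop:a5});
\item removal of $\theta\bar m$ from a base measure using the margin $K_1<K_2$, with a regular remainder of tail bound $h^k_n$ (Proposition~\ref{prop:onedec});
\item iteration, so that $\alpha_n=\bP(\xi_1+\cdots+\xi_\tau=n)$ with $\tau$ geometric of parameter $\theta$ (Lemma~\ref{lem:proproprobab});
\item a convolution estimate for the tail.
\end{itemize}
Two points in your write-up do not work as stated.

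\textbf{The tail estimate in Step~3.} Splitting by the number of returns fails. If fewer than $\delta n^u$ cycles are completed by time $n$, the longest cycle is only guaranteed to have length $\gtrsim n^{1-u}/\delta$. The union bound then gives $\delta n^u\exp(-c\,\delta^{-u}n^{u(1-u)})$, a strictly worse exponent for $u<1$. For $u=1$, the case needed for Theorem~\ref{thm:main-ml}, it gives $\delta n\,e^{-c/\delta}$, which does not even tend to $0$. Drop the split and rely only on the estimate in your last paragraph, but make explicit why the balancing closes: the per-cycle constant must tend to $1$ as the exponent shrinks. The paper gets this from exponential moments. If $\bP(\xi_j\ge t\mid\xi_1,\dots,\xi_{j-1})\le C_1e^{-C_1't^u}$, then for $\beta\le C_1'/2$ one has $\bE[e^{\beta\xi_j^u}\mid\xi_1,\dots,\xi_{j-1}]\le 1+\beta D$ with $D$ fixed. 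Subadditivity of $t\mapsto t^u$ and Markov's inequality then give $\bP(\xi_1+\cdots+\xi_k\ge t)\le(1+\beta D)^ke^{-\beta t^u}$. Choosing $\beta$ with $1+\beta D\le(1-\theta)^{-1/2}$ makes $\sum_k(1+\beta D)^k(1-\theta)^{k-1}\theta$ converge. A bound $C_0^me^{-A''n^u}$ in which $C_0$ does not approach $1$ would not be summable against $(1-\theta)^m$.

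\textbf{Independence of $(\alpha_n)$ from $\mu$.} Step~2 subtracts from whatever base mass $\mu$ actually has at each time. The resulting weights therefore depend on the actual return-time distribution of $\mu$, not merely on the bound $r$, so the claimed independence does not follow from the scheme you describe. This independence is essential. In Corollary~\ref{cor:ml_orig_maps} one writes $(\bar T_{1,n})_*\bar\mu_i=\sum_{j\le n}\alpha_j(\bar T_{j+1,n})_*\bar m+\sum_{j>n}\alpha_j(\bar T_{1,n})_*\mu_{i,j}$, and the first sums cancel only because the $\alpha_j$ are the same for $i=1,2$. The paper obtains this by letting the prescribed tails dictate the schedule. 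The variable $\xi_1$ has law exactly $\bP(\xi_1\ge\ell)=\hr(\ell-n_0)$, and each later $\xi_{j+1}$ has conditional tail exactly $\hh^{k'}_{n}(\ell-n_0)$, with $k'$ and $n$ determined by $\xi_1,\dots,\xi_j$ as in Proposition~\ref{prop:onedec}. None of these laws involve $\mu$. Realizing them requires holding back mass that reaches $\bar Y$ earlier than the tail bound forces. This is possible because restrictions to $\bar Y$ of pushforwards of regular measures have $|\cdot|_{\LL}\le K_1<K_2$. That is the content of \cite[Lemmas~4.3 and 4.5]{KL25} invoked by the paper, and you should build this holding-back step into Step~2 explicitly.
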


    From Theorem~\ref{thm:decdec} we deduce a stretched exponential rate of memory loss
    for the original maps
	$T_1,T_2,\ldots$. To this end, 
	given a measure $\mu$ on $Y$, we denote by $\bar \mu$ the measure on
	$\bar Y$ defined by
	\begin{align}\label{eq:mu_bar}
	\bar \mu (B \times \{ 0 \}) = \mu (B), \quad B \in \cB |_Y.
	\end{align}
	
	\begin{cor}\label{cor:ml_orig_maps} Assume that $\mu_1$ and $\mu_2$ are probability measures on $Y$ 
	such that $\bar \mu_1$ and $\bar \mu_2$ are regular measures on 
	$\bar X_1$. Moreover, assume that, for 
	$i = 1,2$ and all $n \ge 1$,
	\[
	\mu_i(  R_1 \ge n ) \le C_r \exp( - C_r' n^u ).
	\]
	Then, for all $n \ge 1$,
		\[
		| (T_{1,n})_*\mu_1 - (T_{1,n})_* \mu_2 | \le C \exp(  - C'n^u  ).
		\]
        The constants $C, C' > 0$ 
		depend only on $A, A', C_r, C_r', K_1, K_2$, and the system constants ($\mathfrak{d}$, $K$, $\lambda$, $\delta_\#$, $N$, $n_1,\ldots, n_N$).
	\end{cor}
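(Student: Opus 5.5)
The plan is to apply Theorem~\ref{thm:decdec} to $\bar\mu_1$ and $\bar\mu_2$ separately and then project the extended dynamics back to $X$. First, each $\bar\mu_i$ is a regular probability measure on $\bar X_1$ supported on $\bar Y$. On $\bar Y$ we have $\tau_1 = R_1$, so
\[
\bar\mu_i(\tau_1\ge n)=\mu_i(R_1\ge n)\le C_r\exp(-C_r'n^u).
\]
Thus $\bar\mu_i$ has a tail bound $r(n)=C_r\exp(-C_r'n^u)$ of the form \eqref{eq:rn}, with the same $r$ for $i=1,2$. Theorem~\ref{thm:decdec} then gives decompositions $\bar\mu_i=\sum_{j\ge1}\alpha_j\mu_{i,j}$ with $(\bar T_{1,j})_*\mu_{i,j}=\bar m$. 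The point is that the coefficients $(\alpha_j)$ are the \emph{same} for $i=1,2$, because they depend only on $K_1,K_2$, the system constants, $r$ and $(h^j)_{j\ge1}$. The theorem also gives $\sum_{j\ge n}\alpha_j\le C\exp(-C'n^u)$.

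Second, relate the extended and original dynamics. Let $\pi\colon\bigsqcup_k\bar X_k\to X$, $\pi(x,j)=x$. By \eqref{eq:ext_dyn}, $\pi\circ\bar T_k=T_k\circ\pi$ on $\bar X_k\setminus\bar N_k$. Each $\mu_{i,j}$ is a component of $\bar\mu_i$, which gives zero mass to $\bar N_1$. I expect the push-forward measures along the orbit not to charge $\bar N_{k}$ either: this follows as in the proof of Proposition~\ref{prop:regular}(c), since the restrictions to $\bar Y$ are absolutely continuous with respect to $\bar m$, and $\bar m(\bar N_k)=0$. Hence $\pi_*(\bar T_{1,n})_*\bar\mu_i=(T_{1,n})_*\mu_i$, where we use $\pi_*\bar\mu_i=\mu_i$.

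Third, for $n\ge1$ split
\[
(\bar T_{1,n})_*\bar\mu_1-(\bar T_{1,n})_*\bar\mu_2=\sum_{j\le n}\alpha_j(\bar T_{j+1,n})_*\bigl[(\bar T_{1,j})_*\mu_{1,j}-(\bar T_{1,j})_*\mu_{2,j}\bigr]+\sum_{j>n}\alpha_j\bigl[(\bar T_{1,n})_*\mu_{1,j}-(\bar T_{1,n})_*\mu_{2,j}\bigr].
\]
Each bracket in the first sum vanishes, since both terms equal $\bar m$. Each bracket in the second sum has total variation at most $2$. Projecting by $\pi$ does not increase total variation, so
\[
|(T_{1,n})_*\mu_1-(T_{1,n})_*\mu_2|\le 2\sum_{j>n}\alpha_j\le 2C\exp(-C'n^u).
\]

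The constants depend only on the data listed in Theorem~\ref{thm:decdec}, with $r$ determined by $C_r,C_r'$ and $(h^j)$ controlled by $A,A',u$ through (A6). The main obstacle is the second step, namely checking that the projection $\pi$ intertwines $\bar T$ and $T$ almost everywhere. This requires controlling the exceptional sets $\bar N_k$, on which $\bar T_k$ is defined artificially, for all the component measures $\mu_{i,j}$; regularity of the push-forwards handles this.
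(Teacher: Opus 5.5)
Your proposal is correct and follows the paper's proof. The paper likewise applies Theorem~\ref{thm:decdec}, where the common tail bound gives common coefficients $\alpha_j$, to obtain $\sup_{B}|(\bar T_{1,n})_*\bar\mu_1(B)-(\bar T_{1,n})_*\bar\mu_2(B)|\le 2\sum_{j>n}\alpha_j$, and then projects to $X$ through the sets $(B\times\bZ_+)\cap\bar X_{n+1}$, which is your map $\pi$. Your explicit split into $j\le n$ and $j>n$, and your observation that regularity of the push-forwards keeps $\bar\mu_i$-almost every orbit off the sets $\bar N_k$, only spell out details the paper leaves implicit in its mod-$0$ set identity.
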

	
	\begin{proof} First, Theorem~\ref{thm:decdec} implies 
    
		\begin{align*}
		\sup_{  B \in  \bar \cB_{n + 1}  }	| (\bar{T}_{1,n})_* \bar \mu_1(B) -  (\bar{T}_{1,n})_* \bar \mu_2(B)  | \le 2 \sum_{j > n} \alpha_j  \le C \exp(-C'n^u).
		\end{align*}
		For any $B \in \cB$ we have that $( B \times \bZ_+ ) \cap \bar{X}_{n+1} \in \bar{\cB}_{n + 1}$ and 
		\[
		\bar{T}_{1,n}^{-1}[ ( B \times \bZ_+ ) \cap \bar{X}_{n+1} ] \cap \bar Y
		= ( T_{1,n}^{-1} (B) \times \{0\} ) \cap \bar Y.
		\]
		Hence,
		\begin{align}\label{eq:ext_ml}
		\sup_{ B \in \cB } |   (T_{1,n})_* \mu_1 ( B)   - (T_{1,n})_* \mu_2 ( B)| \le 
		\sup_{  B \in  \bar \cB_{n + 1}  }	| (\bar{T}_{1,n})_* \bar \mu_1(B) -  (\bar{T}_{1,n})_* \bar \mu_2(B)  |
		\end{align}
		and the desired estimate follows.
	\end{proof}

\subsection{Proof of Theorem~\ref{thm:main-ml}}\label{sec:ml_hcb}

Let $(a_k)_{k \ge 1}$ be a sequence of parameters in $(0,\frac{1}{2M})$ that satisfies \eqref{eq:param_range}. Recall that $m$ is the Lebesgue measure on $X = [0,1]^2$, 
which is equipped with the Euclidean metric $d(x,y) = |x - y|$.
Let $Y = (0,1)^2$.
Recall also that $T_k = f_{a_k}$ whenever $k \ge 1$. For convenience we set 
$T_k = f_{a_1}$ for $k\le 0$. 
By Proposition~\ref{induce-prop}, the two-sided sequence $(T_k)_{k \in \bZ}$ satisfies assumptions (A1)-(A6) in Section~\ref{sec:nnue}. 
In particular:
\begin{itemize}
    \item (A1) holds with $Y = (0,1)^2$;
    \item in (A3) the induced map $F_W = T_{k,k+R_k(W) - 1}|_W$ for $W \in \cP_k$
    is an affine map onto $Y$;
    \item (A4) holds with $K=1$;
    \item (A5) holds with $N=2$, $n_1 = 2$, and $n_2  = 3$;
    \item (A6) holds with $u = 1$.
\end{itemize}

We define the extended map $\bar T_k$ as in~\eqref{eq:ext_dyn}.
Let $\mu_i$ ($i = 1,2$) be  a probability measure on $Y$ whose density $\rho_i$ is Lipschitz
continuous with coefficient $L$, such that  
$\inf_{y \in Y} \rho_i(y) \ge \underline{\rho}$ for $i = 1,2$ and some $\underline{\rho} > 0$.
Then, for $W \in \cP_k$,  
\[
\frac{ (F_W)_* ( \mu_i |_W  )  }{dm} (x) = \frac{ \rho_i ( F_W^{-1} x) }{  \det DF_W ( F_W^{-1} (x) )  }.
\]
Moreover, $| \mu_i |_{\LL} \le \underline{\rho}^{-1} L$. It follows from 
Proposition~\ref{prop:K} that $\bar \mu_i$ 
is regular on $\bar X_k$ for $i = 1,2$
provided that we choose $K_2 > \underline{\rho}^{-1} L$ in 
Definition~\ref{def:reg}. This is possible, since we can take $K_2$
in Proposition~\ref{prop:K} to be arbitrarily large. Fixing such $K_2$,
Corollary~\ref{cor:ml_orig_maps} implies
\begin{align}\label{eq:ml_y}
| (T_{1,n})_*\mu_1 - (T_{1,n})_* \mu_2 | \le C \exp(  - C'n  ),
\end{align}
where $C, C'$ depend only on $L, \underline{\rho}$ and $M, \overline{a}, \underline{a}$. 

Next, let $\mu_i$ ($i=1,2$) be probability measures on $Y$ whose
densities $\rho_i$ are Lipschitz continuous with coefficient $L$, without
assuming that they are bounded away from zero. In this case we write
\[
(T_{1,n})_*\mu_1 - (T_{1,n})_*\mu_2 =  2 (T_{1,n})_*\mu_1' - 2 (T_{1,n})_*\mu_2',
\] 
where 
\[
\mu_i' = \frac{ \mu_i + m }{2}.
\]
Then $\mu_i'$ is a probability measure on $Y$ with density $\rho_i' \ge 1/2$
that is Lipschitz continuous with coefficient $L/2$, 
so that \eqref{eq:ml_y} extends 
to such measures. Further, \eqref{eq:ml_y}  extends to measures whose densities
are H\"older continuous with exponent $\theta$ by changing the metric to $d(x,y) = |x - y|^\theta$. Finally, since 
$m(X \setminus Y) = 0$,  \eqref{eq:ml_y} readily extends to measures $\mu_i$ on 
$X$ with H\"older continuous densities. The proof of Theorem~\ref{thm:main-ml} is complete. \qed

\section{Proof of Theorem~\ref{thm:main-fcb}}\label{sec:fcb_proof}

Let $T_k$ and $\bar{T}_k$ be defined as in Section~\ref{sec:ml_hcb}.
Let $\mu$ be a probability measure on $X$ whose density $\rho$ is Lipschitz
continuous with coefficient $L$, such that  
$\underline{\rho} = \inf_{x \in X} \rho(x) > 0$. Recall that $\bar{\mu}$ is defined 
by \eqref{eq:mu_bar}. As in Section \ref{sec:ml_hcb}, 
we choose $K_2 > \underline{\rho}^{-1} L$ in 
Definition~\ref{def:reg} so that 
$\bar \mu$ 
is regular on $\bar X_k$ for any $k \in \bZ$.


We define 
recursively a sequence $(\chi_j)_{j \ge 1}$ of stopping times by 
\[
\chi_j(y) = \chi_{j-1}(y) + R_{ \chi_{j - 1}(y) + 1  } \circ T_{{1, \chi_{j - 1}(y) }}, \quad y \in Y,
\]
whenever $\chi_{j-1}(y)<\infty$, and set $\chi_j(y)=\infty$ otherwise,
where $\chi_0=0$.
Note that $\chi_1 = R_1$. 

In terms of the extended dynamics $\bar T_j$, $\chi_j$ can be interpreted as the $j$th successive return time to $\bar Y$ in the following sense. Let
\[
\bar \tau_j (x) = \inf \{  \ell > \bar \tau_{j - 1}(x) \colon  \bar T_{1, \ell }(x) \in \bar Y   \}, \quad j \ge 1,
\]
where $\bar\tau_0 = 0$. Then, for $y \in Y$,
\[
\bar\tau_j( y, 0 ) = \chi_j(y).
\]
The above identities hold on a subset of $Y$ with full $m$-measure and we ignore the null set as this does not create any issues.

Next, we define
\[
L(n)(y) = \max (  \{ 0 \} \cup   \{  1 \le \ell \le n  \colon  \chi_{\ell}(y) \le n    \}  ) = \# \{  1 \le  \ell \le n \colon \bar T_{1,\ell}(y,0) \in \bar Y   \}.
\]
Thus $L(n)$ counts the number of returns to $\bar Y$ by time $n$.
Note that, by definition, for all $y\in Y$,
\[
\chi_{_{L(n)(y)  }} (y)\le n <   \chi_{_{L(n)(y) + 1}}(y).
\]

Given 
$n \ge 1$, we consider the (mod $m$) partition of $Y$ into the sets

\begin{align}\label{eq:cylinder}
\begin{split}
&C_n(\ell_1, \ldots, \ell_{p+1}; W_1, \ldots, W_{p+1}) \\
&= \{ \chi_1 = \ell_1, \ldots, \chi_{p + 1} = \ell_{p + 1} \}
\cap \{L(n) = p\}
\cap \bigcap_{j=0}^p T_{1,\ell_j}^{-1}(W_{j+1}) \\
&= 
\bigcap_{j=0}^p
\left\{
R_{\ell_j+1}\circ T_{1,\ell_j}
=
\ell_{j+1}-\ell_j
\right\}
\cap
\bigcap_{j=0}^p T_{1,\ell_j}^{-1}(W_{j+1}),
\end{split}
\end{align}
where 
\begin{itemize}
    \item $\ell_0 = 0$ and
    $1 \le \ell_1 < \cdots < \ell_{p} \le n < \ell_{p + 1}$ if $1 \le p \le n$;
    \item $\ell_1 > n$ if $p = 0$; and
    \item $W_1 \in \cP_1, W_2 \in \cP_{ \ell_1 + 1 }, \ldots, 
    W_{p + 1} \in \cP_{\ell_p + 1}$.
\end{itemize}
In terms of the extended dynamics, this corresponds to partitioning $\bar Y$, into countably many \say{cylinders} 
$C_n(\ell_1, \ldots, \ell_{p+1}; W_1, \ldots, W_{p+1})$
such that on each such cylinder $\ell_1 < \cdots < \ell_{p}$ 
enumerate return times to $\bar Y$ up to time $n$, $\ell_{p + 1}$ is the first return time 
after $n$, and $W_j$ is the unique element of $\cP_{\ell_j + 1}$ containing the state of the 
system after the $j$th return.

\subsection{Proof of the functional correlation bound} 
Recall that
\[
H(x,y) = F( T_{1, i_1}(x), \ldots, T_{1, i_l}(x),  T_{1, i_{l+1}}(y), \ldots, T_{1, i_n } (y) ).\]
We define integers as follows:
\[
i_* = i_l + \lfloor  g / 3  \rfloor , \quad g = i_{l + 1} - i_l, \quad i_\# = i_l  + \lfloor  2 g / 3  \rfloor.\]
Without loss of generality we assume that $g > 3$.  Throughout the proof 
we denote by $C,C'$ constants depending only on $\theta, L, \underline{\rho}, M, \overline{a}, \underline{a}$.
The values of $C, C'$ may change from line to line. We write $x = O(y)$ if 
$|x| \le C |y|$. 

We begin by decomposing
\begin{align*}
	\mathcal{I} &= \int_X H(x,x) \, d\mu (x) - \iint_{ X^2 } H(x,y) \, d\mu (x) \, d \mu (y) \\
	&=  \sum_{0 \le p \le i_*, \: \ell_1 < \cdots < \ell_{p+1}} 
    \sum_{  W_1,  \ldots, 
    W_{p + 1}  }
    \int_{ C_{i_*}( \tilde{ \ell }, \tilde{W} ) } 
	\biggl[ H(x,x) - \int_X H(x,y) \, d \mu (y) \biggl] \, d\mu (x),
\end{align*}
where the sums are taken over all return-time sequences 
and
partition-element sequences as in \eqref{eq:cylinder}, and for brevity
we have written
\[
C_{i_*}( \tilde{ \ell }; \tilde{W} ) = C_{i_*}(\ell_1, \ldots, \ell_{p+1}; W_1, \ldots, W_{p+1}).
\]

Next, we define
\[
Q_{l}^* = L(i_*) - L(i_l). 
\]
Note that $Q_{l}^*$ is constant on the set $C_{i_*}( \tilde{ \ell }, \tilde{W} )$, as all return times up to time $i_*$ are determined. In terms of the extended dynamics $\bar T_{1,k}$,
$Q_{l}^*$ is the number of returns to $\bar Y$ within the time interval $(i_l, i_*]$. We further decompose $\mathcal{I} = \mathcal{I}_1 + \mathcal{I}_2 + \mathcal{I}_3$ into three terms according to the values of $Q_l^*$ and $\ell_{p + 1}$ as follows:
\begin{align*}
	\mathcal{I}_1 &=  \sum_{ \substack{0 \le p \le i_*, \: \ell_1 < \cdots < \ell_{p+1} \\ \ell_{p+1} > i_\# } } 
    \sum_{  W_1,  \ldots, 
    W_{p + 1}  }
    \int_{ C_{i_*}( \tilde{ \ell }, \tilde{W} ) } 
	\biggl[ H(x,x) - \int_X H(x,y) \, d \mu (y) \biggl] \, d\mu (x), \\
	\mathcal{I}_2 &=  \sum_{ \substack{0 \le p \le i_*, \: \ell_1 < \cdots < \ell_{p+1} \\ \ell_{p+1} \le i_\#, \: Q_{l}^*  > t }  } 
    \sum_{  W_1,  \ldots, 
    W_{p + 1}  }
    \int_{ C_{i_*}( \tilde{ \ell }, \tilde{W} ) } 
	\biggl[ H(x,x) - \int_X H(x,y) \, d \mu (y) \biggl] \, d\mu (x), \\
		\mathcal{I}_3 &=  \sum_{ \substack{0 \le p \le i_*, \: \ell_1 < \cdots < \ell_{p+1} \\ \ell_{p+1} \le i_\#, \: Q_{l}^* \le t }  } 
        \sum_{  W_1,  \ldots, 
    W_{p + 1}  }
        \int_{ C_{i_*}( \tilde{ \ell }, \tilde{W} ) } 
	\biggl[ H(x,x) - \int_X H(x,y) \, d \mu (y) \biggl] \, d\mu (x).
\end{align*}
Here $t \ge 0$ is an integer to be determined later. In the remainder of the proof, we show that 
$\mathcal{I}_1$ and $\mathcal{I}_3$ are negligible if $t$ is chosen suitably, and then use 
Theorem~\ref{thm:decdec} to control $\mathcal{I}_2$.

\smallskip 

\noindent\textbf{Bound on $\mathcal{I}_1$}: Observe that 
\begin{align*}
	|\mathcal{I}_1| \le 2 \Vert F \Vert_\infty \mu ( \chi_{  L(i_*) + 1   }   \ge i_\#    )
	\le  2 \Vert F \Vert_\infty \mu (   R_{ \eta (i_*)  + 1  } \circ T_{1, \eta(i_*)} \ge    i_\#  - \eta(i_*)   ),
\end{align*}
where $\eta(i) = \chi_{ L(i) } \le i$.
Using the property that each map $T_k$ preserves $m$, and Proposition \ref{induce-prop}-(f), 
we obtain
\begin{align}\label{eq:estim_I1}
	\begin{split}
		|\mathcal{I}_1| &\le 2 \Vert F \Vert_\infty  
		\Vert \rho \Vert_\infty 
		\sum_{ j = 0 }^{ i_* } m(  \{  \eta(i_* ) = j  \} \cap \{   R_{ j + 1 } \circ T_{1,j}  \ge i_\# - j  \}  ) \\
		&\le 2  \Vert F \Vert_\infty  \Vert \rho \Vert_\infty  \sum_{ j = 0 }^{ i_* } m(     R_{ j + 1 } \circ T_{1,j}  \ge i_\# - j     )  \\
		&=  2 \Vert F \Vert_\infty \Vert \rho \Vert_\infty  \sum_{ j = 0 }^{ i_* } m(     R_{ j + 1 } \ge i_\# - j    ) 
		\le  C \Vert F \Vert_\infty    \exp( - C' g  ).
	\end{split}
\end{align}


\noindent\textbf{Bound on $\mathcal{I}_3$}:  We have 
\[
|\cI_3| \le 2 \Vert F \Vert_\infty \mu (  Q_l^* \le t ).
\]
Since $i_* - i_l = \lfloor g / 3 \rfloor$ and $Q_l^* = L(i_*) - L(i_l)$,
\begin{align*}
	\{  Q_l^* \le t  \} &= \{  Q_l^* \le t  \} \cap  \biggl\{  
	\sum_{j=0}^{ Q_l^* - 1 } (     \chi_{ L(i_l) + j + 1 }  - \chi_{ L(i_l) + j }     ) + (   \chi_{ L(i_*)  + 1 } -  \chi_{ L(i_*) }    ) \ge 
	\lfloor g / 3 \rfloor
	\biggr\} \\
	&\subset \biggl( \{  Q_l^* \le t  \} \cap \bigcup_{j=0}^{ Q_l^* - 1 } \{  \chi_{ L(i_l) + j + 1 }  - \chi_{ L(i_l) + j } \ge  \lfloor g / 3 \rfloor  / (t+1)  \} \biggr) \\
	&\qquad \cup \{   \chi_{ L(i_*)  + 1 } -  \chi_{ L(i_*) }  \ge \lfloor g / 3 \rfloor  / (t + 1)  \}
	\\
	&= 
    \biggl( \{  Q_l^* \le t  \} \cap 
	\bigcup_{j=0}^{Q_l^*-1} \{  R_{ \kappa(i_l,j)  + 1} \circ T_{1, \kappa(i_l,j) } \ge  \lfloor g / 3 \rfloor  / (t+1)  \} \biggr) \\
    &\qquad \cup \{  R_{ \kappa(i_*,0) + 1 } \circ T_{1, \kappa(i_*,0) } \ge \lfloor g / 3 \rfloor  / (t + 1)  \},
\end{align*}
where $\kappa(k,j) = \chi_{ L(k) + j }$. Using the $T_k$-invariance of $m$ and Proposition \ref{induce-prop}-(f), we obtain 
\begin{align*}
	&m(   R_{ \kappa(i_*,0) + 1 } \circ T_{1, \kappa(i_*,0) } \ge \lfloor g/3 \rfloor / (t + 1)  )
    \\
	&= \sum_{ j =  0 }^{i_*} m(  \{  \chi_{ L(i_*) } = j    \}  \cap \{  R_{j + 1} \circ T_{1,j} \ge \lfloor g / 3 \rfloor / (t + 1)    \}  ) \\
	&\le C ( i_* + 1 )\exp(  - C' g / (t + 1)  ),
\end{align*}
and, 
\begin{align*}
	&m \biggl(  \{  Q_l^* \le t  \} \cap  \bigcup_{j=0}^{Q_l^*-1} \{  R_{ \kappa(i_l,j)  + 1} \circ T_{1, \kappa(i_l,j) } \ge  \lfloor g / 3 \rfloor  / (t+1)  \}  \biggr) \\
    &\le 
    m \biggl( \bigcup_{j=0}^{t-1} \bigcup_{r=0}^{i_*} 
    \{ \kappa(i_l,j) = r  \}  \cap  \{  R_{r + 1} \circ T_{1,r} \ge \lfloor g / 3 \rfloor  / (t + 1)    \}
    \biggr) \\
    &\le \sum_{j = 0}^{t-1} \sum_{  r = 0  }^{ i_* } 
	m( \{ \kappa(i_l,j) = r  \}  \cap  \{  R_{r + 1} \circ T_{1,r} \ge \lfloor g / 3 \rfloor  / (t + 1)    \}   ) \\
	&\le C ( i_* + 1 ) t \exp(  - C'  \lfloor g / 3 \rfloor / (t + 1)  ).
\end{align*}
Therefore, 
\begin{align*}
	m(  Q_l^* \le t   ) \le C i_* t \exp(  - C'   \lfloor g / 3 \rfloor  / (t + 1)  ).
\end{align*}
We choose $t = \lceil  \sqrt{g} \rceil$. It follows that 
\begin{align}\label{eq:estim_I3}
	|\mathcal{I}_3|  \le    C  \Vert F \Vert_\infty \Vert \rho \Vert_\infty  (i_l + 1)   \exp( - C' \sqrt{g}  ).
\end{align}

\smallskip 

\noindent\textbf{Bound on $\mathcal{I}_2$}: Let $x,x' \in C_{i_*}( \tilde{ \ell }; \tilde{W} )$, 
where $\ell_{p+1} \le i_{\#}$, and $Q_l^* > t$. 
By the definition of $C_{i_*}(\tilde{\ell};\tilde{W})$, repeated applications of Proposition~\ref{induce-prop}-(c) together with Proposition~\ref{induce-prop}-(d) give, for any $y\in X$,
\begin{align*}
|H(x, y) - H(x', y)| \le \sum_{k=1}^l  \Vert F \Vert_{\theta}  | T_{1, i_k }(x) - T_{1,i_k}(x') |^\theta \le  C  \Vert F \Vert_{\theta}
 l \lambda^{-\theta t}.
\end{align*}
Hence, fixing $c( \tilde{ \ell }; \tilde{W}  ) \in C_{i_*}( \tilde{ \ell }; \tilde{W} )$, we have 
\begin{align*}
\mathcal{I}_2 &=  \sum_{ \substack{0 \le p \le i_*, \: \ell_1 < \cdots < \ell_{p+1} \\ \ell_{p+1} \le i_\#, \: Q_{l}^*  > t }  }  
\sum_{  W_1,  \ldots, 
    W_{p + 1}  }
\int_{ C_{i_*}( \tilde{ \ell }; \tilde{W} )  } 
\biggl[ H( c( \tilde{ \ell }; \tilde{W}  )  ,x) - \int_X H(  c( \tilde{ \ell }; \tilde{W}  )  ,y) \, d \mu (y) \biggl] \, d\mu (x) \\
&\qquad+ O(  \Vert F \Vert_{\theta}  i_l  \lambda^{-\theta t} ).
\end{align*}

Suppose that $\mu(  C_{i_*}(\tilde{\ell};\tilde{W}) ) > 0$. We denote by $\nu$ the probability measure 
on $X$ defined by
\begin{align*}
	 \nu(A) = \frac{ \mu(  C_{i_*}(\tilde{\ell};\tilde{W}) \cap A ) }{ \mu(  C_{i_*}(\tilde{\ell};\tilde{W}) ) },
\end{align*}
and let $\tilde{H}(  c( \tilde{ \ell }; \tilde{W}  ) , x  )$ be the function that satisfies 
\[
\tilde{H}(  c( \tilde{ \ell }; \tilde{W}  ) ,  T_{1, i_{l + 1} } x  ) = H(  c( \tilde{ \ell }; \tilde{W}  ) ,  x  ).
\]
Then, we have
\begin{align*}
	\bar\cI &:= \int_{ C_{i_*}(\tilde{\ell};\tilde{W}) } \biggl[
	H(  c( \tilde{ \ell }; \tilde{W}  ) , x )
	-  \int_X   H(  c( \tilde{ \ell }; \tilde{W}  ) , y ) \, d\mu (y)  \biggr] \, d\mu (x) \\
	&=  \mu (   C_{i_*}(\tilde{\ell};\tilde{W})  ) \int_X \biggl[
	\tilde{H}( c( \tilde{ \ell }; \tilde{W}  ), \cdot )
	-  \int_X   H( c( \tilde{ \ell }; \tilde{W}  ) , y ) \,  d\mu (y)  \biggr] \circ T_{1, i_{l+1} }(x)
	  \, d \nu  (x) \\
	&=   \mu (  C_{i_*}(\tilde{\ell};\tilde{W})  ) \int_X \biggl[
	\tilde{H}( c( \tilde{ \ell }; \tilde{W}  ), \cdot )
	-  \int_X   H( c( \tilde{ \ell }; \tilde{W}  ) , y ) \,  d\mu (y)  \biggr] \circ T_{1, i_{l+1} }(x)
	\, ( d \nu  (x) - d \mu (x) ).
\end{align*}
From the last identity above we see that 
\begin{align*}
|\bar \cI| &\le 2 \mu ( C_{i_*}(\tilde{\ell};\tilde{W})   ) \Vert F \Vert_\infty |  (T_{1, i_{l+1} })_* \nu -  (T_{1, i_{l+1} })_* \mu   |.
\end{align*}
Recalling \eqref{eq:ext_ml}, it follows that 
\begin{align*}
|\bar \cI| &\le 2 \mu ( C_{i_*}(\tilde{\ell};\tilde{W})   ) \Vert F \Vert_\infty |  (\bar{T}_{1, i_{l+1} })_* \bar{\nu} -  (\bar{T}_{1, i_{l+1} })_* \bar{\mu}   | \\
&= 2 \mu ( C_{i_*}(\tilde{\ell};\tilde{W})   ) \Vert F \Vert_\infty |  (\bar{T}_{ \ell_{p + 1} + 1 , i_{l+1} })_* \bar{\nu}_* -  (\bar{T}_{ \ell_{p + 1} + 1 , i_{l+1} })_* \bar{\mu}_*   |,
\end{align*}
where $\bar{\nu}_* = (\bar{T}_{1, \ell_{p+1}})_* \bar{\nu}$ and 
$\bar{\mu}_* = (\bar{T}_{1, \ell_{p+1}})_* \bar{\mu}$. 
Recall also that
$\ell_{p+1} \le i_\# = i_l + \lfloor 2g / 3 \rfloor$.

In order to apply Theorem~\ref{thm:decdec}, we need to show that 
$\bar{\mu}_*$ and $\bar{\nu}_*$ are regular measures. Since $\bar{\mu}$ is regular, 
the regularity of $\bar{\mu}_*$ follows from Proposition \ref{prop:regular}-(c).
Moreover, by Proposition~\ref{prop:onedec_prelim}, $\bar{\mu}_*$ has tail bound 
$r(j) = \frac12 h_{ \ell_{p+1} }^{ 1 }(j) \le C \exp( - C' j )$.

As for $\bar{\nu}_*$, we observe that it is a measure supported on $\bar{Y}$ with 
density given by
\begin{align*}
	\psi(y, 0) = \frac{1}{\mu(   C_{i_*}(\tilde{\ell};\tilde{W})   )} \frac{  \rho( \tilde{y}   )  }{ |
		  \det D  (  T_{1,  \ell_{p + 1} }  |_{  C_{i_*}(\tilde{\ell};\tilde{W})  } ) \tilde{y}   | },
\end{align*}
where $\tilde{y}$ is the unique preimage of $y$ on $C_{i_*}(\tilde{\ell};\tilde{W})$ under $T_{1, \ell_{p + 1}}$.
Since $T_{k, k + R_k(W) - 1} |_W$ is affine for $W \in \cP_k$, it follows by the chain rule that 
$y \mapsto D  (  T_{1,  \ell_{p + 1} }  |_{ C_{i_*} } ) \tilde{y}  $ is constant on $Y$. Therefore, for $y_1, y_2 \in Y$, with respective preimages $\tilde y_1, \tilde y_2$, 
\begin{align*}
| \log \psi (y_1, 0) - \log \psi(y_2, 0) | &\le |  \log \rho( \tilde{y}_1  ) - \log  \rho(  \tilde{y}_2  ) |
\le |\rho|_{\LL} d(   \tilde{y}_1  , \tilde{y}_2  ) 
\le \underline{\rho}^{-1} L d(y_1, y_2), 
\end{align*}
where Proposition~\ref{induce-prop}-(c) was used in the last inequality. 
Thus $| \bar{\nu}_* |_{\LL} \le \underline{\rho}^{-1} L$.
Since $\underline{\rho}^{-1} L < K_2$, 
it follows by Proposition \ref{prop:regular}-(b) that $\bar{\nu}_*$ is regular 
with tail bound $r(j) \le C \bar{h}^{ \ell_{p + 1} }(j) \le C \exp(-jC')$.

Now, Theorem~\ref{thm:decdec} implies 
\begin{align*}
    |  (\bar{T}_{ \ell_{p + 1} + 1 , i_{l+1} })_* \bar{\nu}_* -  (\bar{T}_{ \ell_{p + 1} + 1 , i_{l+1} })_* \bar{\mu}_*   | \le C \exp( - C' ( i_{l+1} -  \ell_{p + 1} ) )
    \le C \exp( - C'_1 g ).
\end{align*}
Summing over all the cylinders, we obtain 
\begin{align}\label{eq:estim_I2}
	\begin{split}
	| \cI_2|  
	&\le  C  \Vert F \Vert_{\theta}  \exp(  - C'  g ) +  C  \Vert F \Vert_{\theta}  (i_l + 1) \lambda^{-\theta t} \\
	&\le C  \Vert F \Vert_{\theta}   (i_l + 1)  \exp(  - C'  \sqrt{g}  ).
	\end{split}
\end{align}

Combining \eqref{eq:estim_I1}, \eqref{eq:estim_I3} and \eqref{eq:estim_I2}, we conclude that 
\begin{align*}
	|\cI| \le |\cI_1| + |\cI_2| + |\cI_3| \le C \Vert F \Vert_{\theta}   (i_l + 1)   \exp( - C'  \sqrt{ i_{l+1} - i_l } ).
\end{align*}
The proof of Theorem~\ref{thm:main-fcb} is complete. \qed

\section{Proof of Theorem~\ref{thm:main-clt}}\label{sec:clt}

We begin by introducing some notation. Throughout the proof, we write
$W$ for $W_N$, $\Sigma$ for $\Sigma_N$, and $S$ for $S_N$, omitting the subscripts.
For an $\bR^d$-valued random vector $\xi$ on
$(X,\mu)$, we write
$
\mu(\xi) = \int_X \xi \, d\mu
$, $\overline{\xi} = \xi - \mu(\xi)$,
and $[\xi]_r$ for the components of $\xi$ ($r = 1,\ldots,d$).
The spectral norm of a matrix $A \in \bR^{d \times d}$ is denoted by 
$\Vert A \Vert_\spe$. 
For any $0 \le n < N$ and $m \in \bZ$, we define
\begin{align*}
	Y_n &= B^{-1}  \xi_n, \qquad B = \Sigma^{1/2}, \\
	W_{n,m} &= \sum_{\substack{0 \le i < N \\ |i-n| > m}} Y_i, \qquad
    \xi_{n,m} = \sum_{\substack{0 \le i < N \\ |i-n| = m}}  \xi_i,
	\qquad
	Y_{n,m} = B^{-1}  \xi_{n,m}.
\end{align*}
Note that $W_{n,N-1} = 0$ and $W_{n,-1} = W$. 

Without loss of generality we assume that 
\begin{align}\label{eq:wlog_bound}
\lambda_{\min}^{-1/2}(N) = 
\Vert B^{-1} \Vert_\spe \le \frac{1}{100 L}.
\end{align}
Otherwise, we have the trivial upper bound 
\begin{align}\label{eq:trivial}
\begin{split}
d_{\cW}(  W, Z  ) &\le  \mu(  | W |  ) + E( | Z | )  \le  N 2L \Vert B^{-1} \Vert_\spe + 
E( | Z | )
\\
&\le   C_0 L^3  N \Vert B^{-1} \Vert_\spe^3 + 
 C_0 \sqrt{d} L \Vert B^{-1} \Vert_\spe,
\end{split}
\end{align}
for some absolute constant $C_0 > 0$, which implies \eqref{eq:w1_bound}.

The proof of Theorem~\ref{thm:main-clt} follows the approach of \cite{LS20},
which is based on Stein's method. Given $h \in \mathcal{W}$, we consider the
Stein equation associated with the multivariate standard normal distribution
$N_d(0,\bI_{d})$,
\begin{align}\label{eq:se}
	\Delta A(w) - w^{ \mathrm{T} } \nabla A(w)
	= h(w) - E(h(Z)) .
\end{align}
Our goal is to control the right-hand side of \eqref{eq:se} after substituting
$w = W$ and taking expectations with respect to $\mu$.

By \cite[Proposition~2.2]{GMS18}, the solution to \eqref{eq:se}, whose explicit
formula is given by
\begin{align*}
	A_h(w)
	= -\int_0^{\infty}
	\left\{
	E \left[ h \left(e^{-s}w + \sqrt{1 - e^{-2s}}\, Z \right) \right]
	- E(h(Z))
	\right\}
	\, ds ,
\end{align*}
satisfies $A_h \in C^2(\bR^d,\bR)$, and the 
second derivative satisfies the
($1 + \log$) Lipschitz bound 
\begin{align}\label{eq:log_lip}
\| D^2 A_h(w) - D^2 A_h(w') \|_{\rm sp} \le 	| w - w' |  (  C_\#(d) +   | \log | w - w' | | ),
\end{align}
where
\[
C_\#(d)
= 2^{\frac{3}{2}}
\frac{1 + 2d}{d}
\frac{\Gamma\!\left( \frac{1 + d}{2} \right)}{\Gamma\!\left( \frac{d}{2} \right)} \le 6 \sqrt{d},
\]
and $\Gamma$ denotes the gamma function. We define
\[
\mathcal{A} = \{ A_h \colon h \in \mathcal{W} \}.
\]

Next, for $A \in C^2(\bR^d, \bR)$, $u \in [0,1]$, $0 \le n < N$ and $m \in \bZ$, we set 
\[
\delta^{n,m}(u) = D^2A(W_{n,m} + u\, Y_{n,m}) - D^2A(W_{n,m})
\] 
and 
\[
\delta^{n,m} = \delta^{n,m}(1) = D^2 A(W_{n,m-1}) - D^2A(W_{n,m}).
\]
By \cite[Proposition~5.3]{LS20}, 
\[
\mu[ \Delta A(W) - W^\mathrm{T} \nabla A(W)] = \sum_{i=1}^7 E_i, 
\]
where
\begin{align*}
	E_1 & = - \sum_{n=0}^{N-1}\sum_{m=1}^{N-1} \int_0^1  \, \mu[  Y_{n}^\mathrm{T} \delta^{n,m}(u) Y_{n,m}   ] \, du, \quad 
	E_2 = - \sum_{n=0}^{N-1} \int_0^1   \mu[ Y_{n}^\mathrm{T} \delta^{n,0}(u) Y_n ] \, du,
	\\
	E_3 & = - \sum_{n=0}^{N-1}\sum_{m=1}^{N-1} \sum_{k=m+1}^{2m}    \mu [  Y_{n}^\mathrm{T} \, \overline{\delta^{n,k}} \, Y_{n,m}  ],
	\quad 
	E_4  = - \sum_{n=0}^{N-1}\sum_{m=1}^{N-1} \sum_{k=2m+1}^{N-1}     \mu [  Y_{n}^\mathrm{T} \, \overline{\delta^{n,k}} \, Y_{n,m}  ],
	\\
	E_5 & = - \sum_{n=0}^{N-1} \sum_{k=1}^{N-1}  \mu [  Y_{n}^\mathrm{T} \, \overline{\delta^{n,k}} \, Y_n  ],
	\quad 
	E_6  = \sum_{n=0}^{N-1}\sum_{m=1}^{N-1}  \mu \left[  Y_{n}^\mathrm{T}   \sum_{k=0}^m \mu( \delta^{n,k} )  Y_{n,m}  \right],
	\\
	E_7 & =  \sum_{n=0}^{N-1}  \mu [  Y_{n}^\mathrm{T}   \mu( \delta^{n,0} )  Y_n  ].
\end{align*}
Therefore,
\begin{align}\label{eq:prelim}
	d_{ \cW }(W, Z) \le \sup_{ A \in \cA  } \sum_{i=1}^7 |E_i|.
\end{align}
It remains to control each term $E_i$ for $A \in \cA$. To this end, we note that the bound in Theorem~\ref{thm:main-fcb}  extends readily to the case 
of \say{several gaps} by induction as follows (see \cite[Proposition~2.9]{LNN25}): Suppose that $0 \le i_1 < \cdots < i_k \le n$, and $1 \le \ell_1 < \cdots < \ell_p < k$ are integers, $k \ge 2$, 
and that $F \colon X^k \to \bR$ is  separately $\eta$-H\"older continuous for some $\eta \in (0,1]$. Define
\begin{align*}
	&H(x_1, \ldots, x_{p+1}) \\
	&= F( T_{1, i_1}(x_1), \ldots , T_{1, i_{ \ell_1 } }(x_1),   T_{1, i_{ \ell_1 + 1 } }(x_2), \ldots, T_{1, i_{ \ell_2 } }(x_2 ), 
	\ldots, T_{ 1, i_{ \ell_p + 1 } } (x_{p+1}), \ldots T_{1, i_k}(x_{p+1}) ).
\end{align*}
Then, 
\begin{align}\label{eq:fcb_many_gaps}
	\begin{split}
		&\biggl| \int_X H(x,\ldots, x) \, d \mu(x) - \int\cdots\int_{X^{p+1}} H(x_1, \ldots, x_{p+1}) \, d \mu(x_1) \cdots   \, d \mu(x_{p+1}) \biggr| \\
		&\le  C \Vert F \Vert_{ \eta } \sum_{j=1}^p ( i_{ \ell_j }  + 1 )  \exp( -C' \sqrt{ i_{\ell_j + 1} - i_{ \ell_j }  }   ).
	\end{split}
\end{align}

\subsection{Bounds on $E_i$} Let $C,C'$ denote various constants depending only on 
$\theta, \underline{\rho}, | \rho |_{ \Lip }$ and  $M, \overline{a}, \underline{a}$. For convenience, we introduce the following conventions for sequences of points in $X$.
For any subset $J \subset \mathbb{Z}_+$, 
let $X^J$ denote the set of functions from $J$ to $X$, and write $x_J$
for an element $(x_j)_{j \in J}$ in $X^J$.
Using an analogous convention, we write 
$\tilde T_{J} = (  T_{ 1,j }  )_{  j \in J }$.
For two subsets $J_1, J_2 \subset \mathbb{Z}_+$ that are ordered in the sense that 
every element of $J_1$ is less than every element of $J_2$, 
we write $(x_{J_1}, x_{J_2})$ for the concatenation of the corresponding 
vectors. If $J_1 = \emptyset$, we identify $(x_{J_1}, x_{J_2})$ with $x_{J_2}$, and 
more generally, whenever a concatenation involves an empty index set,
the corresponding vector is omitted. Finally, for any integers $p,q$ we define 
subsets of $\bZ_+$ as follows:
\begin{align*}
I(p,q) &= \{ 0 \le j < N \colon p \le j \le q \}, 
\quad I_{-}(p) = \{ 0 \le j < N \colon j \le p \}, \\
I_+(q) &= \{ 0 \le j < N \colon j \ge q \}.
\end{align*}

Given $A \in \cA$, $u \in [0,1]$ and integers  $0 \le m \le N -1$, $0 \le n \le N -1$,
let 
\[\begin{split}
	\Psi_u( x_{ I_{-}(n-m)  }, x_{ I_{+}(n+m) }; n,m ) =& D^2 A \biggl(  B^{-1} \sum_{  \substack{ 0 \le i < N \\ |i - n| > m  }   } \bar{g}_i(x_i)  
	+ u  B^{-1} \sum_{  \substack{ 0 \le i < N \\ |i - n| = m  }   } \bar{g}_i(x_i)  
	\biggr) \\
	&- D^2 A \biggl(    B^{-1} \sum_{  \substack{ 0 \le i < N \\ |i - n| > m  }   } \bar{g}_i(x_i)  \biggr).
\end{split}\]
If $m=0$, then $n$ belongs to both $I_-(n-m)$ and $I_+(n+m)$.
In this case, we interpret
$(x_{I_-(n-m)},x_{I_+(n+m)})$
as $(x_0,\ldots,x_{N-1})$, with the coordinate $x_n$ appearing only once.
Note that 
\[
\Psi_u(  \tilde{T}_{ I_-(n-m)  },   \tilde{T}_{ I_+(n+m)  } ; n,m ) = \delta^{n,m}(u).\]
Inequality \eqref{eq:log_lip} implies 
\begin{align*}
	&\Vert 	\Psi_u( x_{ I_{-}(n-m)  }, x_{ I_{+}(n+m) } ; n,m ) \Vert_\spe  \\
	&\le C
	\biggl| u B^{-1} \sum_{  \substack{ 0 \le i < N \\ |i - n| = m  }  } \bar{g}_i(x_i) \biggr|
	\biggl[  C_\#(d) -   \log \biggl| u B^{-1} \sum_{  \substack{ 0 \le i < N \\ |i - n| = m  }  } \bar{g}_i(x_i) \biggr|  \biggr].
\end{align*}
By \eqref{eq:wlog_bound}, we have
\begin{align*}
    \biggl| u B^{-1} \sum_{  \substack{ 0 \le i < N \\ |i - n| = m  }  } \bar{g}_i(x_i) \biggr|
    < e^{-1}.
\end{align*}
Using the property that $t \mapsto t \log (1/t)$ is increasing 
on $(0, e^{-1}]$, we obtain 
\begin{align}\label{eq:psi_sup}
	\Vert 	\Psi_u( x_{ I_{-}(n-m)  }, x_{ I_{+}(n+m) } ; n,m ) \Vert_\spe  
	\le C L \Vert B^{-1} \Vert_\spe  ( C_\#(d)  - \log \Vert B^{-1} \Vert_\spe   ).
\end{align}

For brevity, we write $x_{ J } ( y / j )$ for the sequence in $X^J$ obtained from
$x_{J}$ by replacing $(  x_J )_j$ 
with $y \in X$. Using \eqref{eq:log_lip}, the H\"older continuity of $g_j$, 
and the property that $t \mapsto t \log (1/t)$ is increasing 
on $(0, e^{-1}]$,
we obtain for $y, y' \in X$
and $j \in I_{-}(n-m)$ the upper bound
\begin{align}\label{eq:lip_psi-1}
\begin{split}
  &\Vert 	\Psi_u( x_{ I_{-}(n-m)  } ( y / j )
	, x_{I_{+}(n+m)} ; n,m )  -  	\Psi_u( x_{ I_-(n-m)  } ( y' / j ) , x_{I_+(n+m)} ; n,m )  \Vert_\spe  \\
  &\le C | B^{-1} (  g_j( y ) - g_j(y')  ) |  [ C_\#(d) - \log | B^{-1} (  g_j( y ) - g_j(y')  ) | ]   \\
  &\le C \Vert B^{-1} \Vert_\spe L ( C_\#(d) - \log \Vert B^{-1} \Vert_\spe ) |y - y'|^\theta  
  -  C \Vert B^{-1} \Vert_\spe L |y - y'|^\theta  \log( L |y-y'|^\theta ) \\
  &\le C \Vert B^{-1} \Vert_\spe L ( C_\#(d) - \log \Vert B^{-1} \Vert_\spe ) |y - y'|^{\theta/2}.
\end{split}
\end{align} 
Similarly, for $y, y' \in X$
and $j \in I_{+}(n+m)$, 
\begin{align}\label{eq:lip_psi-2}
\begin{split}
  &\Vert 	\Psi_u( x_{ I_{-}(n-m)  } 
	, x_{I_{+}(n+m)} ( y / j ) ; n,m )  -  	\Psi_u( x_{ I_-(n-m)  }, x_{I_+(n+m)}( y' / j ) ; n,m )  \Vert_\spe  \\
  &\le C \Vert B^{-1} \Vert_\spe L ( C_\#(d) - \log \Vert B^{-1} \Vert_\spe ) |y - y'|^{\theta/2}.
\end{split}
\end{align}

The upper bounds \eqref{eq:psi_sup}, \eqref{eq:lip_psi-1}, and \eqref{eq:lip_psi-2} enable 
us to apply \eqref{eq:fcb_many_gaps} to bound $E_1$. To this end, 
for $m < k$ and $u \in [0,1]$ we define 
\begin{align*}
	&F_u(  x_{ I_{-}(n-k)}, x_{n-m}, x_n, x_{n + m}, x_{ I_{+}(n+k)}     ; n,m, k ) \\
	&=  (  B^{-1} \bar{g}_n (x_n) )^{ \mathrm{T} }   \Psi_u (  x_{ I_{-}(n-k) }  , 
	x_{ I_{+}(n+k) }
    ; n,k
	) \sum_{ \substack{ 0 \le i < N \\  |i - n| = m }  } B^{-1} \bar{g}_i(x_i).
\end{align*}
If $m=k$, then we define
$F_u(x_{I_-(n-m)},x_n,x_{I_+(n+m)};n,m, m)$
by the same formula, omitting the arguments $x_{n-m}$ and $x_{n+m}$,
since they are already included in $I_-(n-m)$ and $I_+(n+m)$. 

For brevity, let us denote
\[
\fD
=
L^3 \Vert B^{-1} \Vert_\spe^3
(
C_\#(d)-\log \Vert B^{-1} \Vert_\spe
).
\]
Combining \eqref{eq:psi_sup} and \eqref{eq:lip_psi-1} and \eqref{eq:lip_psi-2}, it is straightforward to verify that, for any $m \le k$ and $u \in [0,1]$, 
\begin{align*}
	\Vert F_u(\cdot; n, m, k) \Vert_{ \theta / 2 } \le C \fD.
\end{align*}
Moreover,
\begin{align*}
    &F_u(  \tilde{T}_{ I_{-}(n-k)}, T_{1, n-m}, T_{1,n}, T_{1, n + m}, \tilde{T}_{ I_{+}(n+k)}  ; n,m, k ) = Y_{n}^{ \mathrm{T} } \delta^{n,k}(u) Y_{n,m}, \quad m < k, \\
    &F_u(  \tilde{T}_{ I_{-}(n-m)}, T_{1,n}, \tilde{T}_{ I_{+}(n+m)}  ; n,m,m ) = Y_{n}^{ \mathrm{T} } \delta^{n,m}(u) Y_{n,m}.
\end{align*}
Since $\mu(  \bar{g}_n( T_{1,n} )  ) = 0$,  \eqref{eq:fcb_many_gaps} with $\eta = \theta/2$ implies that,
for any $m \le k$ and $u \in [0,1]$, 
\begin{align}\label{eq:fcb_appli1}
	\begin{split}
	&|	\mu[  Y_{n}^{ \mathrm{T} } \delta^{n,k}(u) Y_{n,m}   ] | 
	\le C \fD n e^{ - C' \sqrt{m} },
	\end{split}
\end{align}
and 
\begin{align}\label{eq:fcb_appli2}
	|	\mu[  Y_{n}^{\mathrm{T}}  \overline{\delta^{n,k}(u)} Y_{n,m}   ] |
	\le C \fD
	\min\{  n e^{ - C' \sqrt{m} }, (n+ m)  e^{ - C' \sqrt{ k - m} }
	 \}.
\end{align}

Consider $E_1$. For $\rho_N \in \bN$,  
by \eqref{eq:fcb_appli1} we have 
\begin{align*}
	|E_1| \le& \sum_{n=0}^{N-1}\sum_{m=1}^{N-1} \int_0^1  | \mu[ Y_{n}^{ \mathrm{T} } \delta^{n,m}(u) Y_{n,m}   ]  | \, du \\
	\le& \sum_{n=0}^{N-1}\sum_{m=1}^{\rho_N - 1} \int_0^1  | \mu[  Y_{n}^{ \mathrm{T} } \delta^{n,m}(u) Y_{n,m}   ]  | \, du 
	+ \sum_{n=0}^{N-1}\sum_{m= \rho_N }^{N-1} \int_0^1  | \mu[  Y_{n}^{ \mathrm{T} } \delta^{n,m}(u) Y_{n,m}   ]  | \, du \\
	\le& C \fD N \rho_N
	+   C \fD  N^2  \sum_{m= \rho_N }^{\infty} e^{ - C' \sqrt{m} } 
	\le C \fD  ( N \rho_N  + N^2 e^{ - ( C' / 2) \sqrt{ \rho_N }    } ).
\end{align*}
Choosing $\rho_N =  \lceil ( C_1  \log N )^2 \rceil$ for $C_1$ sufficiently large (depending on $C'$) yields 
\begin{align*}
	|E_1| &\le C \fD N (1 + \log^2 N).
\end{align*}

Further, using \eqref{eq:psi_sup},
\begin{align*}
	|E_2| + |E_7| &= \biggl|  \sum_{n=0}^{N-1} \int_0^1   \mu[ Y_{n}^{ \mathrm{T} }\delta^{n,0}(u) Y_n ] \, du \biggr| 
	+ \biggl|  \sum_{n=0}^{N-1}  \mu [  Y_{n}^{ \mathrm{T} }  \mu( \delta^{n,0} )  Y_n  ] \biggr|
	  \le C \fD N.
\end{align*}

As in the case of $E_1$, for $\rho_N \in \bN$, using \eqref{eq:fcb_appli2} we obtain
\begin{align*}
	&|E_3| + |E_4| \\
    =& \biggl| \sum_{n=0}^{N-1}\sum_{m=1}^{N-1} \sum_{k=m+1}^{2m}    \mu [  Y_{n}^{ \mathrm{T} } \, \overline{\delta^{n,k}} \, Y_{n,m}  ] 
	\biggr| + \biggl|  \sum_{n=0}^{N-1}\sum_{m=1}^{N-1} \sum_{k=2m+1}^{N-1}     \mu [  Y_{n}^{ \mathrm{T} } \, \overline{\delta^{n,k}} \, Y_{n,m}  ] 
	\biggr| \\
	\le& \biggl| \sum_{n=0}^{N-1}\sum_{m=\rho_N}^{N-1} \sum_{k=m+1}^{2m}    \mu [  Y_{n}^{ \mathrm{T} } \, \overline{\delta^{n,k}} \, Y_{n,m}  ] 
	\biggr|
	+ \biggl| \sum_{n=0}^{N-1}\sum_{m= 1 }^{ \rho_N -1} \sum_{k=m+1}^{2m}    \mu [  Y_{n}^{ \mathrm{T} } \, \overline{\delta^{n,k}} \, Y_{n,m}  ] 
	\biggr|
	\\
	&+ \biggl|  \sum_{n=0}^{N-1}\sum_{m= \rho_N }^{N-1} \sum_{k=2m+1}^{N-1}     \mu [  Y_{n}^{ \mathrm{T} } \, \overline{\delta^{n,k}} \, Y_{n,m}  ] 
	\biggr|  \\
	&+ \biggl|  \sum_{n=0}^{N-1}\sum_{m= 1  }^{\rho_N  - 1} \biggl\{  
	\sum_{k=2m+1}^{  2 \rho_N  }     \mu [  Y_{n}^{ \mathrm{T} } \, \overline{\delta^{n,k}} \, Y_{n,m}  ]  + 
	\sum_{k = 2 \rho_N + 1 }^{N-1}     \mu [  Y_{n}^{ \mathrm{T} } \, \overline{\delta^{n,k}} \, Y_{n,m}  ]
	\biggr\}
	\biggr| \\
	\le&  C \fD 
    \biggl\{  
	 N^2  \sum_{ m = \rho_N }^{N-1} m e^{ - C' \sqrt{m} } +  N \rho_N^2  
     +
	 N^2 \sum_{ m  = 1}^{  \rho_N - 1 }  \sum_{ k = 2 \rho_N + 1 }^{N-1}  e^{ - C' \sqrt{ k - m} } \biggr\}.
\end{align*}
Choosing $\rho_N =  \lceil ( C_2  \log N )^2 \rceil$ for $C_2$ sufficiently large, it follows that 
\begin{align*}
	|E_3| + |E_4| \le C \fD N (  1 + \log^4N ).
\end{align*} 
A similar application of \eqref{eq:fcb_appli2} gives
\begin{align*}
	|E_5| + |E_6| \le C \fD N (  1 + \log^2N ).
\end{align*} 
Recalling that $C_\#(d) \le C \sqrt{d}$, we have 
$$
\fD \le C \sqrt{d} L^3 
\max\{1 , \log \lambda_{ \min } (N) \} \lambda_{ \min }^{-3/2}.
$$
We conclude that 
\begin{align*}
	\sup_{A \in \cA} \sum_{i=1}^7 |E_i| \le C L^3  \sqrt{d} \max\{1 , \log \lambda_{ \min } (N) \}   \lambda_{ \min }^{-3/2} (N) N (  1 + \log^4N ).
\end{align*}
Combined with \eqref{eq:prelim} and \eqref{eq:trivial}, this gives the desired estimate
\eqref{eq:w1_bound}. \qed


\section{Proof of Theorem \ref{thm:nearby}}\label{sec:nearby}

    Let $(a_k)_{k \ge 1}$ be a sequence of parameters satisfying \eqref{eq:param_range}.
    For each $a\in(0,\frac{1}{M})$,
    let $\cL_a \colon L^1(m) \to L^1(m)$ denote the transfer operator associated to $(f_a, m)$,
    satisfying for every $\psi \in L^1(m)$ and $\varphi \in L^\infty(m)$ the duality relation
    \[
    \int_X \varphi \circ f_a \psi \, d m = \int_X \varphi  \cL_a(\psi) \, d m.
    \]
    A direct computation shows that
	$\cL_a$ can be represented explicitly as 
	\begin{align}\label{eq:to}
    \begin{split}
		\cL_a u( x_u, x_c ) &= \sum_{i=1}^M  \mathbf{1}_{  [  ( i - 1 ) / M, i / M  ) } (x_c) \cdot aM
		u(  a x_u + (i -1) a, M x_c - (i - 1)  ) \\
		&\quad+ \sum_{i=1}^M \frac{1 - Ma}{M} u \biggl( (1 - Ma) x_u + Ma , \frac{1}{M} (x_c + i - 1) \biggr),
    \end{split}
	\end{align}
where $\mathbf{1}_{  [  ( i - 1 ) / M, i / M  )  }$ denotes the indicator function for the interval $[  ( i - 1 ) / M, i / M  )$.
	
	As in the case of maps, we denote time-dependent compositions of transfer operators by
	\begin{align*}
		\cL_{j,k}  = \begin{cases}
			\cL_{a_k} \circ \cdots \circ \cL_{a_j}, &\text{if $1 \le j \le k$,} \\
			\text{id}_{L^1(m)}, &\text{otherwise.}
		\end{cases}
	\end{align*}
	For $L \ge 1$, Theorem~\ref{thm:main-ml} implies that there exist constants $C_L, C' > 0$ such that for any function $\psi\colon X \to \bR$ with $\int \psi \, dm = 0$
    and $\Vert \psi \Vert_{\Lip} \le L$, and for any $k \ge 1$,
	\begin{align*}
		\Vert  \cL_{k, k + n -1}( \psi )  \Vert_{L^1(m)} \le  C_L e^{ - C' n }.
	\end{align*}
	Let $p \in [1, \infty)$. Using $\Vert \cL_a u \Vert_{ L^\infty(m) } \le \Vert u \Vert_{ L^\infty(m) }$, 
    the previous bound extends to
	\begin{align}\label{eq:ml_lp}
		\Vert  \cL_{k, k + n -1}( \psi )  \Vert_{L^p(m)} \le  C_{p, L} e^{ - C' n / p }.
	\end{align}
    The constant $C'$ depends only on $\theta, M, \bar{a}, \underline{a}$ and $C_{p,L}$ in addition on 
    $L$ and $p$.

	\subsection{Perturbation of transfer operator} 
	
	We verify that $a \mapsto \cL_a u$ is Lipschitz continuous in $L^\infty(m)$ for suitably regular functions $u$. To this end we define 
	\[
	\Vert u \Vert_{*} = \Vert u \Vert_{ L^\infty(m) } + \Vert  \partial_{ x_u } u  \Vert_{L^\infty(m)}
	\]
	and 
	\[
	\cS = \{  u \in L^\infty(m) \colon    \partial_{ x_u } u \in L^\infty(m)  \},
	\]
    where $\partial_{ x_u } u$ denotes the weak partial derivative of $u$ with respect to the $x_u$-coordinate.

	\begin{prop} Let $I$ be a closed interval in $(0, \frac{1}{M})$.
		For all $a,b \in I$, and all $u \in \cS$, 
		\begin{align}\label{eq:perturb}
			\Vert  (  \cL_a - \cL_b  ) u  \Vert_{L^\infty(m)} \le  6 M \Vert u \Vert_* |a-b|.
		\end{align}
		Moreover, for any $a \in I$ and any $u \in \cS$, 
		\begin{align}\label{eq:contract_star}
			\Vert \cL_a u \Vert_* \le \Vert  u \Vert_*.
		\end{align}
	\end{prop}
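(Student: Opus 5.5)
We will argue directly from the explicit formula \eqref{eq:to}, writing $\cL_a u = \cA_a u + \cB_a u$, where $\cA_a$ is the $\alpha$-branch sum and $\cB_a$ is the $\beta$-branch sum. Two features of the formula drive the argument. The $x_c$-dependence of the branches (the indicator $\mathbf 1_{[(i-1)/M,i/M)}(x_c)$, and the arguments $Mx_c-(i-1)$ and $(x_c+i-1)/M$) does not depend on $a$. The parameter $a$ enters only through the weights $aM$ and $(1-Ma)/M$ and through the affine maps $x_u\mapsto a x_u+(i-1)a$ and $x_u\mapsto (1-Ma)x_u+Ma$.

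For \eqref{eq:perturb}, we fix $(x_u,x_c)$ and compare term by term. In the $\alpha$-part only one $i$ contributes, and
\[
aM u(ax_u+(i-1)a,\cdot)-bMu(bx_u+(i-1)b,\cdot)
\]
splits into $(a-b)M u(\ldots)$ plus $bM[u(ax_u+(i-1)a,\cdot)-u(bx_u+(i-1)b,\cdot)]$. The first piece is bounded by $M\|u\|_\infty|a-b|$. For the second piece, the two $x_u$-arguments differ by at most $|a-b|(x_u+i-1)\le M|a-b|$, so absolute continuity of $u$ in $x_u$ on a.e.\ horizontal line bounds it by $bM\cdot M|a-b|\,\|\partial_{x_u}u\|_\infty\le M|a-b|\,\|\partial_{x_u}u\|_\infty$, using $bM<1$. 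In the $\beta$-part there are $M$ terms with weight $(1-Ma)/M$. The weight differences sum to $M|a-b|\,\|u\|_\infty$. The argument shifts satisfy $|(Mb-Ma)x_u+M(a-b)|\le 2M|a-b|$ and carry total weight at most $1$, which gives at most $2M|a-b|\,\|\partial_{x_u}u\|_\infty$. Summing everything gives a bound like $4M\|u\|_*|a-b|\le 6M\|u\|_*|a-b|$.

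The step I expect to be the main technical point is justifying the pointwise difference estimate for $u$ that lies only in $L^\infty$ with weak derivative $\partial_{x_u}u\in L^\infty$. I would handle it via Fubini: for a.e.\ $x_c$, the slice $u(\cdot,x_c)$ has an absolutely continuous (indeed Lipschitz) representative. Because the maps in each branch act on $x_c$ by a fixed bijective affine change that is independent of $a$, null sets in $x_c$ are preserved, so the estimate holds $m$-a.e. The constant $6M$ leaves slack, so I would not optimize.

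For \eqref{eq:contract_star}, the sup-norm bound is immediate: $\cL_a 1 = 1$ (Lebesgue invariance) and $\cL_a$ is positive, so $\|\cL_a u\|_\infty\le \|u\|_\infty$. For the derivative, differentiating \eqref{eq:to} in $x_u$ (legitimately, since $x_c$ enters only as a parameter) gives
\[
\partial_{x_u}\cL_a u=\sum_i \mathbf 1(\cdot)\,aM\cdot a\,(\partial_{x_u}u)(\ldots)+\sum_i\frac{1-Ma}{M}(1-Ma)(\partial_{x_u}u)(\ldots).
\]
This is $\cL_a$-type averaging with extra contraction factors $a<1$ and $1-Ma<1$. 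Hence $\|\partial_{x_u}\cL_a u\|_\infty\le \max\{a,1-Ma\}\|\partial_{x_u}u\|_\infty\le\|\partial_{x_u}u\|_\infty$. Adding the two bounds yields $\|\cL_a u\|_*\le\|u\|_*$.
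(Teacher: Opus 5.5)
Your proposal is correct and follows essentially the same route as the paper. You split the explicit formula \eqref{eq:to} into the $\alpha$- and $\beta$-branch differences, use a representative of $u$ that is Lipschitz in $x_u$ on almost every horizontal slice, and get the contraction from $aM+(1-Ma)=1$ together with the chain-rule factors $a$ and $1-Ma$. Your bookkeeping is slightly sharper than the paper's (about $3M$--$4M$ instead of $6M$, and the factor $\max\{a,1-Ma\}$ in place of $a^2M+(1-Ma)^2$). Your remark that the $a$-independent affine change in $x_c$ preserves null sets also makes explicit a point the paper leaves implicit.
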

	
	\begin{proof} Let $u \in \cS$, 
    $a,b \in I$, $x_u \in (0,1)$ and $(i-1)/M \leq x_c < i/M$ with $i \in \{1,\ldots, M\}$. 
		From \eqref{eq:to} we obtain
		\begin{align*}
			&| (  \cL_a - \cL_b  ) u(x_u, x_c)  | \\
			&\le  \biggl|  aM
			u(  a x_u + (i -1) a, M x_c - (i - 1)  )
			\\
			\qquad &- bM
			u(  bx_u + (i -1) b, M x_c - (i - 1)  ) \biggr| \\
			&+ \sum_{i=1}^M \biggl| \frac{1 - Ma}{M} u \biggl( (1 - Ma) x_u + Ma , \frac{1}{M} (x_c + i - 1) \biggr) \\
			&\quad - 
			\frac{1 - Mb}{M} u \biggl( (1 - Mb) x_u + Mb , \frac{1}{M} (x_c + i - 1) \biggr) \biggr| =: I + I\!I.
		\end{align*}
        Since $u$ has a representative satisfying
        \[
        | u(x_u, x_c) - u(x_u', x_c) | \le \Vert \partial_{x_u} u \Vert_{L^\infty(m)} |x_u - x_u'|\]
        for $m$-almost every $x_c \in [0,1]$, and all $x_u, x_u' \in [0,1]$, we obtain
		\[
		I \le 3 M \Vert u \Vert_* |a-b| \quad \text{and} \quad I\!I \le 3 M \Vert u \Vert_* |a-b|,\]
		which implies \eqref{eq:perturb}.
		
		Further, from \eqref{eq:to} it is straightforward to verify that
		\begin{align*}
			\Vert \cL_a u \Vert_{L^\infty(m)} \le aM \Vert u \Vert_{L^\infty} + (1 - Ma) \Vert u \Vert_{L^\infty} \le 
			\Vert u \Vert_{L^\infty}
		\end{align*}
		and
		\begin{align*}
			\Vert \partial_{x_u} \cL_a u \Vert_{L^\infty(m)} \le ( a^2 M  + (1 - Ma)^2 ) \Vert \partial_{x_u}  u \Vert_{L^\infty} \le   \Vert \partial_{x_u}  u \Vert_{L^\infty}.
		\end{align*}
		which gives \eqref{eq:contract_star}.
	\end{proof}
	
	\subsection{Linear growth of $\lambda_{\min}(N)$} 
    
    There exists $\ve_{a,M} \in (0, \min\{ a, \frac{1}{2M} - a \} )$ such that 
    \eqref{eq:param_range} holds for any 
    sequence $(a_k)_{k \ge 1}$
    of parameters in $(0,\frac{1}{2M})$ with $a_k \in (a - \ve , a + \ve)$, whenever $0 < \ve \le \ve_{a, M}$.
    In the remainder of this section we assume that 
    $0 < \ve \le \ve_{a, M}$ and only consider such sequences.
    
    To establish 
    Theorem~\ref{thm:nearby},
	it suffices to show \eqref{eq:lambda_growth}, for then \eqref{eq:rate_nearby} follows 
	from \eqref{eq:w1_bound}. Below we prove \eqref{eq:lambda_growth} using the argument from \cite[Theorem~4.1]{NTV18}, which involves approximating Birkhoff sums by reverse martingales.
	
	Consider the setting in Theorem~\ref{thm:nearby}. Let $v \in \bR^d$ be an arbitrary unit vector. Note that 
	\begin{align*}
		v^{ \mathrm{T} } \Sigma_N v = \int S_{N,v}^2 \, dm,
	\end{align*}
	where 
	\[
	S_{N,v} = \sum_{k=0}^{N-1} \bar{g}_v \circ T_{1,k}, \quad \bar{g}_v = g_v - \int g_v \, dm, \quad 
	g_v = \langle v,g\rangle.\]
	By our assumption, $g_v$ cannot be written as $\varphi_v \circ f_a - \varphi_v + c_v$
	for any $c_v \in \bR$ and $\varphi_v \in L^2(m)$. Moreover,
	\[
	\lambda_{\min}(N) = \inf_{ \substack{ v\in\mathbb R^d \\ | v | = 1 } } v^{ \mathrm{T} } \Sigma_N v = \inf_{v\in\mathbb R^d \\ | v | = 1  } \int S_{N,v}^2 \, dm,\]
    where $|v|$ denotes the Euclidean norm of $v\in\mathbb R^d$.
	
	Let $G_{0,v} = 0$ and for $n \ge 1$ define 
	\[
	G_{n,v} = \sum_{j = 1}^n \cL_{j,n}( \bar{g}_v ).
	\] 
	Further, for $n \ge 0$ let 
	\[
	H_{n,v} = \bar{g}_v + G_{n,v} - G_{n+1,v} \circ T_{n+1}.
	\]
	Then
	\begin{align}\label{eq:mgle_decomp}
		S_{n,v} = \sum_{k=0}^{n-1} \bar{g}_v \circ T_{1,k} = \sum_{k=0}^{n-1} H_{k,v} \circ T_{1,k} + G_{n,v} \circ T_{1,n}.
	\end{align}
	Let $\cF_n = f_{1,n}^{-1}( \cF_0 )$ where $\cF_0$ denotes the sigma-algebra of Borel sets on $X$.
	As in \cite[Section~2]{NTV18}, we have 
	\begin{align}\label{eq:reverse_mgle}
		\bE[   H_{n,v} \circ T_{1,n}  \mid  \cF_{n + 1}  ] = 
        (\cL_{ a_{n+1} } H_{n,v} ) \circ T_{1, n + 1} = 0,
	\end{align}
	where $\bE[ \psi \mid \cB ]$ denotes the conditional expectation 
    of $\psi$
    with respect to a sub-sigma-algebra 
	$\cB$ and the measure $m$. In particular, $(H_{n,v} \circ T_{1,n})$ is a reverse 
    martingale difference sequence with respect to the decreasing filtration $(\cF_n)$.
    Let 
    \[
    M_{n,v} = \sum_{k=0}^{n-1} H_{k,v} \circ T_{1,k}
    \]
    denote the associated partial sum process. In the sequel, we also write $\bE$ for the expectation 
    with 
	respect to $m$. 
	
	Using \eqref{eq:mgle_decomp} and \eqref{eq:reverse_mgle}, we have 
	\begin{align*}
	\bE[ S_{n,v}^2 ]	 =  \bE[ M_{n,v}^2 ] + \bE[  G_{n,v}^2  ].
	\end{align*}
    Since $\Vert \langle v, g \rangle\Vert_{\Lip} \le \Vert g \Vert_{\Lip}$, 
    \eqref{eq:ml_lp} implies 
	\[
    \bE[  G_{n,v}^2  ] \le C_g
	\]
    for some constant $C_g > 0$ independent of $v$.
	Consequently, by the orthogonality property of martingales,
	\begin{align*}
		\bE[ S_{n,v}^2 ] = \sum_{k=0}^{n-1}  \bE[ H_{k,v}^2 ] + O(1),
	\end{align*}
    where the error term is uniform in $v$.
	
	Next, we write $\tilde \cL$ for the transfer operator with respect to $( \tilde{f}, m)$
	where $\tilde{f} = f_a$. We set $\tilde \cL^n = \tilde \cL \circ \tilde \cL^{n-1}$ 
    and $\tilde{f}^n = \tilde{f} \circ \tilde{f}^{n-1}$
	for $n \ge 1$, where $\tilde \cL^0 = \text{id}_{L^1(m)}$ and $\tilde{f}^0 = \text{id}_{X}$.
    Let
	\[
	\tilde{H}_v = \bar{g}_v + \tilde{G}_v - \tilde{G}_v \circ \tilde{f}
	\]
	where the series
	\[
	\tilde{G}_v = \sum_{k=1}^\infty \tilde{\cL}^k ( \bar{g}_v )
	\]
	converges in $L^p$ for any $p \in [1, \infty)$. We have 
	\[
	\tilde{S}_{n,v} := \sum_{k=0}^{n-1} \bar{g}_v \circ \tilde{f}^k = \sum_{k=0}^{n-1} \tilde{H}_v \circ \tilde{f}^k 
	- \tilde{G}_v + \tilde{G}_v \circ \tilde{f}^n,
	\]
	and 
	\[
	\bE[  \tilde{H}_v \circ \tilde{f}^n  \mid   \tilde{\cF}_{n + 1}   ] = 0
	\]
	where $\tilde{\cF}_n = (  \tilde{f}^n )^{-1} \cF_0 $. Note that $\tilde{H}_v \neq 0$
    in $L^2(m)$ for every unit vector $v\in\mathbb R^d$, because $g$ is not a coboundary for $\tilde{f}$
    in any direction. In particular,
    \begin{align*}
        \kappa := \inf_{ \substack{v\in\mathbb R^d \\ | v | = 1  } }\bE[ \tilde{H}_v^2  ] > 0.
    \end{align*}
    
    Similarly to the case of $S_n$ we have
	\begin{align}\label{eq:variange_mgle}
		\bE[ \tilde S_{n,v}^2 ]	= \sum_{i=1}^n \bE[ \tilde H_v^2 \circ \tilde{f}^i  ] + O(1) = n \bE[ \tilde H_v^2 ] + O(1),
	\end{align}
    where the error is uniform in $v$.
	Therefore, there exists a constant $C > 0$ such that for 
    every unit vector $v$ and $n \ge 1$, 
	\begin{align*}
		| \bE[  S_{n,v}^2 ]	-  \bE[ \tilde S_{n,v}^2 ]	| \le C + \biggl|  n \bE[ \tilde H_v^2 ] - \sum_{k=0}^{n-1} \bE[ H_{k,v}^2 ]    \biggr| \le 
        C + \sum_{k=0}^{n-1} |  \bE[ \tilde H_v^2 ] -  \bE[ H_{k,v}^2 ] |.
	\end{align*}
	
	Let $\ve_1 > 0$. In the remainder of the proof we denote by $C$ various 
    positive constants independent of $n$, $\ve, \ve_1$ and $v$.
    By \eqref{eq:ml_lp}, 
	there exists $N(\ve_1) \in \bN$ 
    such that for every $n \ge N(\ve_1)$ and every unit vector $v\in\mathbb R^d$,  
	\begin{align*}
		G_{n,v} = \sum_{j=0}^{N(\ve_1)-1} \cL_{n - j,n} ( \bar{g}_v ) + E_v(\ve, n, N(\ve_1) )
	\end{align*}
	and 
	\begin{align*}
		\tilde G_v =  \sum_{k=1}^{ N(\ve_1) } \tilde{\cL}^k ( \bar{g}_v ) + \tilde E_v(\ve,  N(\ve_1) )
	\end{align*}
	where $\sup_{\substack{v\in\mathbb R^d \\ | v | = 1 }} \Vert E_v(\ve, n, N(\ve_1) ) \Vert_{L^2} < \ve_1$ and $\sup_{\substack{v\in\mathbb R^d \\ | v | = 1 }} \Vert \tilde E_v (\ve, N(\ve_1) ) \Vert_{L^2} < \ve_1$. On the other hand, 
	\begin{align*}
		\tilde H_v^2 = \bar{g}_v^2 - 2 \bar{g}_v ( \tilde G_v \circ \tilde{f} - \tilde{G}_v  ) 
		+ \tilde{G}_v^2 - 2 \tilde{G}_v \cdot \tilde{G}_v \circ \tilde{f} + \tilde{G}_v^2 \circ \tilde{f},
	\end{align*}
	and 
	\begin{align*}
		H_{k,v}^2 = \bar{g}_v^2 - 2 \bar{g}_v (  G_{k+1,v} \circ T_{k+1} - G_{k,v}  ) 
		+ G_{k,v}^2 - 2  G_{k,v}  \cdot G_{k+1,v} \circ T_{k+1}  + G_{k+1,v}^2 \circ T_{k+1}.
	\end{align*}
	
	Recall that $a_k \in (a - \ve, a + \ve )$.	
	Using \eqref{eq:perturb} and \eqref{eq:contract_star} it is straightforward to 
    verify that
	\begin{align*}
	&\biggl\Vert \sum_{j=0}^{N(\ve_1)-1} \cL_{n - j,n} ( \bar{g}_v ) - \sum_{j=1}^{ N(\ve_1) } \tilde{\cL}^j ( \bar{g}_v )  \biggr\Vert_{L^2(m)} \\
    &\le 
    \sum_{j=0}^{N(\ve_1)-1}
    \sum_{i = 0}^{j}
    \Vert \cL_{ n - j + i + 1, n } ( \cL_{ a_{n - j + i} } - \tilde{\cL} ) \tilde{\cL}^i (\bar{g}_v) \Vert_{L^2(m)}
	\le C N^2(\ve_1)  \ve.
	\end{align*}
    It follows that,
    for all $k \ge N(\ve_1)$,
    \begin{align*}
        &\sup_{\substack{v\in\mathbb R^d \\ | v | = 1 }} |
        \bE[2 \bar{g}_v ( \tilde G_v \circ \tilde{f} - \tilde{G}_v  )] -
        \bE[2 \bar{g}_v (  G_{k+1,v} \circ T_{k+1} - G_{k,v}  ) ]
        | \le C \biggl( \ve_1 + N^2(\ve_1) \ve  \biggr), \\
        &\sup_{\substack{v\in\mathbb R^d \\ | v | = 1 }} 
        | \bE[ \tilde{G}_v^2 - 2 \tilde{G}_v \cdot \tilde{G}_v \circ \tilde{f} + \tilde{G}_v^2 \circ \tilde{f}] - \bE[G_{k,v}^2 - 2  G_{k,v}  \cdot G_{k+1,v} \circ T_{k+1}  + G_{k+1,v}^2 \circ T_{k+1}] | \\
        &\le C \biggl(  \ve_1 + N^2(\ve_1) \ve  \biggr).
    \end{align*}
    Therefore, for all $n \ge 1$, 
	\begin{align*}
		\sup_{\substack{v\in\mathbb R^d \\ | v | = 1 }} |	\bE[  S_{n,v}^2 ] - \bE[ \tilde S_{n,v}^2 ] |  \le C_1 \biggl( C_{\ve_1} + n\ve_1 + n N^2(\ve_1) \ve  \biggr),
	\end{align*}
    where $C_1 > 0$ is a constant independent of $\ve, \ve_1, n$
    and $C_{\ve_1} > 0$ is a constant independent of $\ve, n$.
    First we fix $\ve_1 > 0$ sufficiently small 
	such that
	\[
	C_1 \ve_1 < \kappa / 4.
	\]
	Then we choose $\ve > 0$ sufficiently small such that 
	\[
	C_1 N^2(\ve_1) \ve  < \kappa / 4.
	\]
	It follows that 
	\begin{align*}
		\sup_{\substack{v\in\mathbb R^d \\ | v | = 1 }}
        |	\bE[  S_{n,v}^2 ] - \bE[ \tilde S_{n,v}^2 ] |  \le C_1 C_{\ve_1} + n \kappa / 2,
	\end{align*}
    which, 
    recalling \eqref{eq:variange_mgle}, implies 
	\[
	\min_{\substack{v\in\mathbb R^d \\ | v | = 1 }} \bE[  S_{n,v}^2 ] \ge n \kappa/4,
	\]
	whenever $n$ is sufficiently large. This completes the proof of \eqref{eq:lambda_growth}. \qed

\appendix

\section{Proof of 
Theorem~\ref{thm:decdec}}\label{sec:app_a}

To prove Theorem~\ref{thm:decdec}, we closely follow the coupling approach developed in \cite{KKM19,KL21,KL25}. For completeness, we provide most of the details of the argument.

Define $C_h = 2 e^{ \mathfrak{d} K_2 }$ and
\begin{equation}
	\label{eq:hnk}
	h_n^k(\ell)
	= C_h \bigl( h^k(n + \ell) + h^{k + 1}(n + \ell - 1) + \cdots + h^{k + n}(\ell) \bigr).
\end{equation}
Then 
\begin{align}\label{eq:h_nk}
	h_n^k(\ell) \le  C  \exp( - ( A' / 2 ) \ell^u  ),
\end{align}
where $C$ depends only on $C_h, A, A', u$.

\begin{prop}\label{prop:onedec_prelim} Let $k \in \bZ$. 
Suppose that $\mu$ is a probability measure on $\bar{Y} \subset \bar{X}_k$ with $|\mu|_{\LL} \leq K_2$.
Then, for every $n \geq 0$, the measure $(\bar{T}_{k,k + n - 1})_* \mu$
on $\bar{X}_{k+n}$ is regular with tail bound
$
\frac{1}{2} h^k_n(\ell)
$.
\end{prop}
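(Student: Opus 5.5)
Regularity is immediate: by Proposition~\ref{prop:regular}(b), $\mu$ is regular on $\bar X_k$ (it is supported on $\bar Y$ with $|\mu|_{\LL}\le K_2$, and regularity only requires Proposition~\ref{prop:K}). Proposition~\ref{prop:regular}(c) then shows that $(\bar T_{k,k+n-1})_*\mu$ is regular on $\bar X_{k+n}$ for every $n\ge 1$; the case $n=0$ is the starting measure itself. What remains is the tail bound, i.e.
\[
\bigl((\bar T_{k,k+n-1})_*\mu\bigr)(\tau_{k+n}\ge \ell)\le \tfrac12 h_n^k(\ell)
= e^{\mathfrak d K_2}\sum_{j=0}^{n} h^{k+j}(n-j+\ell).
\]

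To get it, I would decompose according to the last visit to the base before time $n$. A point $z\in\bar X_{k+n}$ in the image with $\tau_{k+n}(z)\ge \ell$ is reached as follows. For some $j\in\{0,\dots,n\}$, the orbit is in $\bar Y$ at time $k+j$, meaning the $(j)$-th pushforward of $\mu$ restricted to $\bar Y$. If $j=0$ this is $\mu$ itself. From there the orbit makes no further return before time $k+n$, and has remaining time at least $\ell$ at time $k+n$; that is, its induced return time satisfies $R_{k+j}\ge n-j+\ell$. This gives
\[
\bigl((\bar T_{k,k+n-1})_*\mu\bigr)(\tau_{k+n}\ge\ell)\le \sum_{j=0}^{n}\nu_j\bigl(\tau_{k+j}\ge n-j+\ell\bigr),
\]
where $\nu_0=\mu$ and $\nu_j=((\bar T_{k,k+j-1})_*\mu)|_{\bar Y}$ for $j\ge1$. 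The bound is an inequality rather than an equality because the last-visit events are disjoint, so this partition of the event does not overcount.

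Each $\nu_j$ is a measure on $\bar Y$ with $|\nu_j|_{\LL}\le K_2$: for $j\ge1$ this is Proposition~\ref{prop:regular}(c), which gives the bound $K_1<K_2$. Its total mass is at most $1$. The density estimate in the proof of Proposition~\ref{prop:regular}(b) gives $d\nu_j/d\bar m\le \nu_j(\bar Y)e^{\mathfrak d K_2}\le e^{\mathfrak d K_2}$. Since $\tau_{k+j}=R_{k+j}$ on $\bar Y$, we get
\[
\nu_j(\tau_{k+j}\ge n-j+\ell)\le e^{\mathfrak d K_2}h^{k+j}(n-j+\ell).
\]
Summing over $j$ yields exactly $\tfrac12 h_n^k(\ell)$, because $C_h=2e^{\mathfrak d K_2}$.

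The main obstacle I expect is making the last-visit decomposition precise on the extended space. One has to check that every point of $\bar X_{k+n}\setminus\bar Y$ in the image lies in some $\bar W_{k+n,n-j}$ coming from a base point at time $k+j$. One also has to check that its $\tau_{k+n}$-value equals $R_{k+j}-(n-j)$. Both should follow directly from definition \eqref{eq:ext_dyn} of $\bar T$, and the set $\bar N$ is handled by measure zero. Points currently in $\bar Y$ correspond to $j=n$, with $\tau_{k+n}=R_{k+n}$.
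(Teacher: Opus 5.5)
Your proof is correct and is essentially the paper's argument. The paper also splits $(\bar T_{k,k+n-1})_*\mu$ according to the last visit to $\bar Y$, indexing by the number $j$ of steps since that visit (your $n-j$). It then bounds $|\nu_j|_{\LL}\le K_2$ via Proposition~\ref{prop:regular}-(c), uses $d\nu_j/d\bar m\le e^{\mathfrak d K_2}$, and shifts the tail of $\nu_j$ in time to obtain $e^{\mathfrak d K_2}\sum_{j=0}^{n} h^{k+n-j}(\cdot+j)=\frac12 h^k_n$.

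One small point: your displayed bound is in fact an equality, since the last-visit events are disjoint and cover the event. The upper bound needs only the covering, so your sentence attributing the inequality to disjointness is muddled, though harmless.
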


\begin{proof} Fix $n \geq 0$ and $k \in \bZ$. For each $0 \leq j \le n$, define 
subsets $Y'_j \subset \bar{X}_{k + n - j}$ and $Y_j'' \subset \bar{X}_k$ as follows:
\begin{align*}
	Y'_j
	& = \{ y \in \bar Y \colon \bar{T}_{k + n - j, k + \ell - 1 }(y) \notin \bar Y \text{ for all $n-j <  \ell \le n$}  \}
	,
	\\
	Y_j''
	& = \bar Y \cap   \bar{T}_{k, k + n - j - 1}^{-1} (Y_j')
	\\
	& = \{ y \in \bar Y  \colon \bar{T}_{k, k + n - j - 1}(y) \in \bar Y
	\text{ and $\bar{T}_{k,k + \ell - 1}(y) \notin  \bar Y$  for all $n - j < \ell \le n$} \}.
\end{align*}
Then the sets $Y_j''$ form a partition of $\bar Y$. Moreover, $Y_0' = \bar Y$ and $Y''_0 = \{ y \in \bar Y \colon \bar{T}_{k, k + n - 1 }(y) \in  \bar Y  \}$.
Denoting by $\mu_j$ the restriction of $\mu$ to $Y_j''$, we have 
\begin{align}\label{eq:mu_pw_decomp}
	( \bar{T}_{k,k + n - 1}  )_*\mu
	= \sum_{j=0}^n ( \bar{T}_{k,k + n - 1}  )_*\mu_j,
\end{align}

Next, define
$\nu_j = (( \bar{T}_{k, k + n - j - 1} )_*\mu)|_{  \bar{Y} }$ for $0 \le j \le n$,
which is a measure on $\bar{X}_{k+n-j}$. Note that for all  $B \in \bar{\cB}_{k+n}$,
\begin{align*}
	(  \bar{T}_{k + n - j, k + n - 1} )_* ( \nu_j |_{Y_j'}  ) (B)
	& = \nu_j ( Y_j' \cap \bar{T}_{k + n - j, k + n - 1}^{-1} (B) ) \\
	&= \mu(   \bar{T}_{k, k + n - j - 1}^{-1} (Y_j')   \cap \bar{T}_{k,k + n - 1}^{-1}(B)) \\
	&= (\bar{T}_{k,k + n - 1} )_*\mu_j(B).
\end{align*}
Hence, $(\bar{T}_{k + n - j, k + n - 1} )_* ( \nu_j |_{Y_j'} ) = (\bar{T}_{k,k + n - 1} )_*\mu_j$.
By Proposition~\ref{prop:regular}-(c), $|\nu_j|_{\LL} \leq K_2$. It follows that 
\[
\nu_j(B) \le \nu_j( \bar Y) e^{ \mathfrak{d} | \nu_j |_{\LL } } \bar{m} (B) \le  
e^{ \mathfrak{d} K_2   } \bar{m} (B).
\]
Therefore, the tail of $\nu_j$ is bounded by $e^{  \mathfrak{d}  K_2  } h^{k + n - j}$.

The pushforward measure $(\bar{T}_{k + n - j, k + n - 1} )_* ( \nu_j |_{Y_j'} )$
inherits the tail bound from $\nu_j$ with a time shift, namely it
has tail bound $e^{  \mathfrak{d} K_2  } h^{k + n - j}(\cdot + j)$.
It follows that the measure $(\bar{T}_{k,k + n - 1})_* \mu$ has tail bound
$e^{\mathfrak{d} K_2  } \sum_{j=0}^n h^{k + n - j}(\cdot + j)$, as required. Finally,
$(\bar{T}_{k,k + n - 1})_* \mu$ is a regular measure on $\bar{X}_{k + n}$
by
Proposition~\ref{prop:regular}.
\end{proof}

Next, as in \cite[Proposition~2.1]{KL25}, we use (A5) to derive the following useful property:

\begin{prop}\label{prop:a5}
	There exist $\delta_0 > 0$ and $n_0 \ge 1$ such that 
	\begin{align}\label{eq:a5}
		\inf_{k \in \bZ} \bar m(  \bar{T}_{k, k + n -1 }^{-1}  \bar Y  ) \ge \delta_0
	\end{align}
	whenever $n \ge n_0$. Moreover, $\delta_0$ and $n_0$ depend only on $N$, $n_1,\ldots, n_N$, $\delta_\#$, $\mathfrak{d}$, $K_1, C_h, A, A', u$.
\end{prop}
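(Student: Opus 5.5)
The plan is a renewal-type argument on the extended tower. There are two ingredients. First, any mass sitting on the base $\bar Y$ has a density that is uniformly bounded below, because of the log-Lipschitz control $K_1$. Second, from the base, a return after exactly $n_j$ steps has probability at least $\delta_\#$ by (A5), and coprimality lets us hit any sufficiently large gap exactly. We work directly with $\bar m$ on $\bar X_k$, which is regular with tail bound $\bar h^k$ by Proposition~\ref{prop:regular}-(b).

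\emph{Step 1 (density lower bound).} If $\nu$ is a nonnegative measure on $\bar Y$ with $|\nu|_{\LL}\le K_1$, then $d\nu/d\bar m\ge e^{-\mathfrak d K_1}\nu(\bar Y)$, since $\bar m(\bar Y)=1$ and $\log$ of the density oscillates by at most $\mathfrak d K_1$.

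\emph{Step 2 (chaining exact returns).} Let $\nu$ be a measure on $\bar Y\subset \bar X_j$ with $|\nu|_{\LL}\le K_1$, and let $i\in\{1,\dots,N\}$. By Step 1 and (A5), $\nu(R_j=n_i)\ge e^{-\mathfrak d K_1}\delta_\#\,\nu(\bar Y)$. The image $(\bar T_{j,j+n_i-1})_*(\nu|_{\{R_j=n_i\}})$ lies on $\bar Y\subset\bar X_{j+n_i}$. It is a sum of pushforwards $(\bar F_{\bar W_0})_*(\nu|_{\bar W_0})$, so by Proposition~\ref{prop:K} and \eqref{eq:sum_ll} it again has $\LL$-seminorm $\le K_1$.

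Since $\gcd(n_1,\dots,n_N)=1$, there is $g_0$ such that every $g\ge g_0$ can be written as $g=\sum_i c_in_i$ with $c_i\in\bZ_+$. Fix a window length $L_2$. For $g\in[g_0,g_0+L_2]$ we can choose such representations with $\sum_i c_i\le q$, where $q$ depends only on $n_1,\dots,n_N$ and $L_2$. Iterating the previous paragraph $\sum_i c_i$ times, in any order, gives
\[
(\bar T_{j,j+g-1})_*\nu\,(\bar Y)\ \ge\ (e^{-\mathfrak d K_1}\delta_\#)^{q}\,\nu(\bar Y)
\]
for every such $g$, uniformly in $j$.

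\emph{Step 3 (reaching the window).} Fix $n$ and set $s=n-g_0-L_2$. By Proposition~\ref{prop:onedec_prelim} applied with $\mu=\bar m$, the measure $(\bar T_{k,k+s-1})_*\bar m$ is regular with tail bound $\tfrac12 h^k_s$. Combined with \eqref{eq:h_nk}, the $\bar m$-mass of points that make no visit to $\bar Y$ during the times $(s,s+L_2]$ is at most $C\exp(-(A'/2)L_2^u)$. We choose $L_2$ (depending only on $C_h,A,A',u$) so that this is $\le 1/2$. We take $n_0=g_0+L_2+1$ so that $s\ge1$.

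For each $j\in(s,s+L_2]$, let $\nu_j$ be the pushforward to time $k+j$ of the set of points whose first base visit after time $s$ occurs at time $j$. This measure sits on $\bar Y\subset\bar X_{k+j}$. It equals the pushforward by $\bar T_{k+s,k+j-1}$ of $((\bar T_{k,k+s-1})_*\bar m)|_{B_{k+s,j-s}}$. By regularity of $(\bar T_{k,k+s-1})_*\bar m$ (Definition~\ref{def:reg}), we get $|\nu_j|_{\LL}\le K_1$. Moreover $\sum_j\nu_j(\bar Y)\ge1/2$. The remaining gap is $g=n-j\in[g_0,g_0+L_2)$, so Step 2 applies to each $\nu_j$. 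The sets of points underlying different $\nu_j$ are disjoint, and pushforward preserves this disjointness. Summing over $j$ therefore yields
\[
\bar m(\bar T_{k,k+n-1}^{-1}\bar Y)\ \ge\ \tfrac12(e^{-\mathfrak d K_1}\delta_\#)^{q}=:\delta_0 .
\]
This holds for all $n\ge n_0$ and all $k$, and the constants depend only on the listed quantities.

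The main obstacle is Step 3: we must keep a \emph{regular} (log-Lipschitz) piece of mass on the base at a time within bounded distance of $n$. Conditioning on the \emph{last} return would destroy the density control. That is why we split according to the \emph{first} return after time $s$, which is exactly the decomposition encoded in the regularity condition \eqref{eq:regular}. The uniform-in-$k$ tail of Proposition~\ref{prop:onedec_prelim} makes the window length $L_2$ independent of $k$ and $n$.
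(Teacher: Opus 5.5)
Your proof is correct and follows essentially the same route as the paper. Both push $\bar m$ forward to a time at bounded distance from $n$, use the uniform tail bound from Proposition~\ref{prop:onedec_prelim} to capture at least half the mass returning to $\bar Y$ within a fixed window (split by first return, so regularity keeps these pieces $K_1$-log-Lipschitz), and then chain exact returns of lengths $n_j$ using (A5) and coprimality; the only cosmetic difference is that you chain directly from each first-return piece, whereas the paper first bounds that piece below by $e^{-\mathfrak{d}K_1}$ times its mass times $\bar m$ and then chains from $\bar m$ as in \cite[Proposition~4.4]{KKM19}.
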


\begin{proof} As $| \bar{m} |_{\LL} \le K_2$, Proposition~\ref{prop:onedec_prelim} combined with 
	\eqref{eq:h_nk} implies 
	\begin{align*}
		( \bar T_{k, k + j -1})_* \bar m (   \tau_{k + j }  \ge n  )  &\le C \exp( - C' n^u ),
	\end{align*}
	for any $k \in \bZ$ and $j,n \ge 0$, where $C, C' > 0$ depend only on $C_h, A, A', u$.
	In particular, there exists $N_\# \in \bN$ depending only on $C_h, A, A', u$
	such that 
	\begin{align}\label{eq:uniform_tail_bound}
		\sup_{k\in\mathbb Z,j\in\mathbb Z_+} ( \bar T_{k, k + j -1})_* \bar m (   \tau_{k + j }  \ge N_\#  ) \le \frac12. 
	\end{align}
	
	Let 
    \[
    N_1 = \max \{  n_1^2, \ldots, n_N^2 \} \quad \text{and} \quad 
    N_2 = N_1 + N_\#.
    \]
    Since $\text{gcd}(n_1, \ldots, n_N) = 1$, 
    it follows from~\cite{S77} that, for any $n \ge N_1$ 
	there exist integers $a_j \ge 0$ such that
	$
	n = \sum_{j=1}^N a_j n_j
	$.  
    For $n > N_2$, we have
	\[
	(\bar T_{k,  k + n - 1})_* \bar m  = ( \bar T_{ k + n - N_2  ,  k + n - 1 } )_*\nu,
	\]
	where $\nu = ( \bar T_{k,  k +  n - N_2 - 1  })_* \bar m$. By Proposition~\ref{prop:regular}-(c), $\nu$ is a regular measure on $\bar{X}_{k+ n - N_2}$ and 
	$| \nu |_{ \bar{Y} } |_{ \LL} \le K_1$.
	
	Using the regularity of
	$\nu$, for $n > N_2$ we obtain
	\begin{align*}
		(\bar T_{k, k + n - 1  })_* \bar m( \bar Y) 
		&\ge \sum_{ \ell \le N_\#}
		( \bar T_{ k + n - N_2 , k + n - 1  } )_*( \nu  |_{  \{  \tau_{k + n - N_2} = \ell   \}  }  ) ( \bar Y) \\
		&= \sum_{\ell \le N_\#}
		( \bar T_{ k + n - N_2  + \ell   , k + n - 1  } )_*  ( \bar T_{ k + n - N_2 ,   k + n - N_2  +  \ell - 1     } )_* (   \nu  |_{  \{  \tau_{k + n - N_2} = \ell   \}  }   ) ( \bar Y)
		\\
		&\ge  \sum_{\ell \le N_\#}  e^{ -   \mathfrak{d}  K_1} (  \bar T_{ k + n - N_2, k + n - N_2 + \ell - 1  } )_*(  \nu  |_{  \{  \tau_{k + n - N_2} = \ell   \}  }   ) (  \bar Y ) \\
		&\qquad\times (\bar T_{k + n - N_2 + \ell, k + n - 1})_* \bar m( \bar Y).
	\end{align*}
	As in~\cite[Proposition~4.4]{KKM19}, using $\bar m(  \tau_k = n_j ) \ge \delta_\#$
    for $1 \le j \le N$
	and Proposition~\ref{prop:regular}, we find that for any $\ell \le N_\#$,
	\[
	(\bar T_{k + n - N_2 + \ell, k + n - 1})_* \bar m( \bar Y) \ge ( \delta_\# e^{ - (\mathfrak{d} + 1) K_1  } )^{N_2}.
	\]
	Moreover,
	\[
	\sum_{\ell \le N_\#}  (  \bar T_{ k + n - N_2 , k + n - N_2 + \ell - 1  } )_*(  \nu  |_{  \{  \tau_{k + n - N_2} = \ell   \}  }   ) (  \bar Y ) = \nu(\tau_{k + n - N_2 } \le N_\# ).
	\]
	Thus, we arrive at the lower bound 
	\begin{align*}
		(\bar T_{k, k + n - 1  })_* \bar m( \bar Y  ) \ge (\delta_\# e^{- ( \mathfrak{d} + 1 ) K_1  } )^{N_2+1} \nu(\tau_{k + n - N_2 } \le N_\# ).
	\end{align*}
	By~\eqref{eq:uniform_tail_bound}, we have $\nu(\tau_{k + n - N_2 } \le N_\# ) \ge 1/2$. Therefore,
	\[
	( \bar T_{k, k + n - 1  })_* \bar m( \bar Y) \ge   (\delta_\# e^{- ( \mathfrak{d} + 1 ) K_1  } )^{N_2+1} \frac12,
	\]
	whenever $n > N_2$.
\end{proof}

From now on we fix $\delta_0$ and $n_0$ such that \eqref{eq:a5} holds. 

\begin{prop}\label{prop:onedec}
	    Let $k \in \bZ$.
	    Let $\mu$ be a probability measure on $\bar{Y} \subset \bar{X}_k$ with $| \mu |_{\LL} \le K_2$.
		There exists $\theta \in (0,1)$ depending only on $K_1$, $K_2$, $\mathfrak{d}$ and $\delta_0$,
		such that for every $n \geq n_0$,
		\begin{align}\label{eq:onedec}
		( \bar{T}_{k,k + n - 1})_* \mu
		= \theta \bar{m} + (1-\theta) \mu'
		,
		\end{align}
		where $\mu'$ is a regular probability measure
		on $\bar{X}_{k+n}$
		with tail bound $h^k_n$.
\end{prop}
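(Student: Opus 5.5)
We will follow \cite[Proposition~4.5]{KKM19} and \cite{KL21}, adapted to the time-dependent towers. Set $\nu = (\bar T_{k,k+n-1})_*\mu$. By Proposition~\ref{prop:onedec_prelim}, $\nu$ is a regular measure on $\bar X_{k+n}$ with tail bound $\tfrac12 h^k_n$. By Proposition~\ref{prop:regular}-(c), its restriction $\nu|_{\bar Y}$ satisfies $|\nu|_{\bar Y}|_{\LL}\le K_1$. The plan is to subtract a multiple of $\bar m$ from $\nu|_{\bar Y}$ so that what remains still has log-Lipschitz constant at most $K_2$.

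\textbf{Step 1: mass on the base.} Apply Proposition~\ref{prop:a5} to $\mu$ rather than to $\bar m$. The density $\psi=d\mu/d\bar m$ satisfies $\psi\ge e^{-\mathfrak d K_2}$ on $\bar Y$, because $\mu$ is a probability measure with $|\mu|_{\LL}\le K_2$. Hence $\mu\ge e^{-\mathfrak d K_2}\bar m$, and for $n\ge n_0$
\[
\nu(\bar Y)\ge e^{-\mathfrak d K_2}\,\bar m(\bar T_{k,k+n-1}^{-1}\bar Y)\ge e^{-\mathfrak d K_2}\delta_0 .
\]
Since $|\nu|_{\bar Y}|_{\LL}\le K_1$, the density $\phi$ of $\nu|_{\bar Y}$ satisfies $\inf\phi\ge e^{-\mathfrak d K_1}\nu(\bar Y)\ge e^{-\mathfrak d(K_1+K_2)}\delta_0=:2\theta_0$.

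\textbf{Step 2: subtracting $\theta\bar m$.} We use the standard fact that if $|\log\phi|_{\Lip}\le K_1$ and $0\le c\le \xi\inf\phi$, then $|\log(\phi-c)|_{\Lip}\le K_1/(1-\xi)$. To see this, note that $|\phi(y)-\phi(y')|\le \phi(y)(e^{K_1 d}-1)$, and divide by $\phi-c\ge(1-\xi)\phi$. This gives a Lipschitz bound on $\log$ locally; for a global bound we use the convention $K_2>K_1$ and the diameter bound as in \cite{KKM19}. Choose $\xi\in(0,1)$ with $K_1/(1-\xi)\le K_2$, which is possible since $K_1<K_2$. Then set $\theta=\xi\theta_0$, which depends only on $K_1,K_2,\mathfrak d,\delta_0$.

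This yields the decomposition
\[
\nu=\theta\bar m+\bigl(\nu|_{\bar Y}-\theta\bar m\bigr)+\nu|_{\bar X_{k+n}\setminus\bar Y}.
\]
Define $\mu'$ by $(1-\theta)\mu' = \nu-\theta\bar m$; it is a probability measure since $\nu$ is.

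\textbf{Step 3: regularity and tail of $\mu'$.} The first remaining piece, $\nu|_{\bar Y}-\theta\bar m$, is supported on $\bar Y$ with log-Lipschitz constant at most $K_2$, so it is regular by Proposition~\ref{prop:regular}-(b). The second piece, $\nu|_{\bar X_{k+n}\setminus\bar Y}$, is regular by Proposition~\ref{prop:regular}-(c). By Proposition~\ref{prop:regular}-(a), $(1-\theta)\mu'$ is regular, and hence so is $\mu'$ after normalization.

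For the tail, $(1-\theta)\mu'(\tau_{k+n}\ge\ell)\le\nu(\tau_{k+n}\ge\ell)\le\tfrac12h^k_n(\ell)$. Dividing by $1-\theta$ gives $\mu'(\tau_{k+n}\ge\ell)\le\tfrac{1}{2(1-\theta)}h^k_n(\ell)$. Choosing $\theta\le\tfrac12$, which only requires shrinking $\xi$, this is at most $h^k_n(\ell)$, as required.

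\textbf{Main obstacle.} The delicate step is Step~2: we must ensure that subtracting $\theta\bar m$ preserves the bound $K_2$ on the log-Lipschitz seminorm uniformly in $k$ and $n$. This needs a quantitative lower bound on the density, which is why Step~1 transfers the uniform return estimate of Proposition~\ref{prop:a5} from $\bar m$ to $\mu$.
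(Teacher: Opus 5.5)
Your proof is correct and follows essentially the same route as the paper. You push $\mu$ forward, bound the base mass from below via Proposition~\ref{prop:a5} (using $\mu\ge e^{-\mathfrak d K_2}\bar m$), peel $\theta\bar m$ off the $K_1$-log-Lipschitz base part so the remainder stays $K_2$-log-Lipschitz, and obtain regularity and the tail bound $h^k_n$ from Propositions~\ref{prop:regular} and~\ref{prop:onedec_prelim} with $\theta\le\frac12$. The only difference is that you prove the subtraction estimate directly instead of citing \cite[Proposition~3.2]{KKM19}, and your \emph{local} caveat there is unnecessary: Bernoulli's inequality $1+(e^{s}-1)/(1-\xi)\le e^{s/(1-\xi)}$ already gives the bound $K_1/(1-\xi)$ globally.
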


\begin{proof} 
By \cite[Proposition 3.2]{KKM19}, for any $\psi \colon \bar Y \to [0, \infty)$ and $t \in [0, \exp( - |\psi|_{\LL} )]$, we have 
\begin{align}\label{eq:psi_ll}
\biggl| 
\psi - t \int_{ \bar{Y} } \psi \, d \bar m
\biggr|_{\LL} \le \frac{|\psi|_{\LL}}{ 1 - t \exp( |\psi|_{\LL} ) }.
\end{align}
We first record the following consequence of \eqref{eq:psi_ll}.

\medskip
	
	\noindent\textbf{Claim.} 
	There exists $\theta_0 \in (0,1)$ depending only on $K_1, K_2$
	such that for every $\theta' \in [0, \theta_0]$, every measure $\rho$ on $\bar Y$ with $|\rho|_{\LL} \le K_1$ can be written as 
	\[
	\rho = \rho( \bar Y ) \theta' \bar m + \rho',
	\]
	where $\rho'$ is a measure on $\bar Y$ with $|\rho'|_{\LL} \le K_2$.
	
	\medskip
	
	\noindent\textit{Proof of the claim.} Let
	\[
	\psi' = \psi - \theta' \rho( \bar Y),
	\]
	where $\psi = d \rho / d \bar{m}$.	Then \eqref{eq:psi_ll} implies 
	\[
	|\psi'|_{\LL} \le \frac{K_1}{1 - \theta' \exp(K_1)} \quad \text{if $\theta' \le \exp(-K_1)$.}
	\]
	Therefore,
	\[
	|\psi'|_{\LL} \le K_2 \quad \text{if $\theta' \le \exp(-K_1)\biggl( 1 - \frac{K_1}{K_2} \biggr)$.}
	\]
	So, we may choose $\theta_0 = \exp(-K_1) ( 1 - K_1 / K_2 )$.
    \hfill
	$\qed_{\text{Claim}}$
	
	\medskip

    Next, we prove the decomposition \eqref{eq:onedec}. To this end, let 
	\[
	\rho_{k,n} = [  ( \bar{T}_{k,k + n - 1})_* \mu ] |_{  \bar Y },
	\]
	where $n \ge n_0$. By Proposition~\ref{prop:regular}, we have $|\rho_{k,n}|_{\LL} \le K_1$.
	Moreover, using \eqref{eq:a5} we obtain 
	\[
	\rho_{k,n}( \bar{Y} ) \ge \delta_0 e^{  -  \mathfrak{d} K_2 }.
	\]
	Set $\theta = \min \{ \theta_0  \delta_0 e^{  -   \mathfrak{d}  K_2  }, 1/2 \}$. Then 
	\[
	\rho_{k,n} =  \theta \bar m + \rho'_{k,n},
	\]
	where $\rho'_{k,n}$ is a measure on $\bar Y$ with $|\rho'_{k,n}|_{\LL} \le K_2$.
	
	Further, define
	\[
	\mu'
	= (1-\theta)^{-1} \bigl( (\bar{T}_{k,k + n - 1})_* \mu - \theta \bar m \bigr)
	= (1-\theta)^{-1} \Bigl( \rho'_{k,n} + \bigl((\bar{T}_{k,k+n-1})_*
	\mu \bigr)\big|_{ \bar{X}_{k+n} \setminus \bar Y} \Bigr)
	.
	\]
	Then $\mu'$ is a probability measure 
	on $\bar{X}_{k + n}$
	and
	$(\bar{T}_{k,k+n-1})_* \mu = \theta \bar m + (1-\theta) \mu'$.
	By Proposition~\ref{prop:regular},
	$\rho'_{k, n}$ and $\bigl((\bar{T}_{k,k+n-1})_* \mu \bigr)\big|_{\bar{X}_{k+n} \setminus
    \bar{Y} }$
	are regular measures, and hence so is $\mu'$. Finally,
	\[
	\mu' \leq (1-\theta)^{-1} (\bar{T}_{k,k+n-1})_* \mu \le 2 (\bar{T}_{k,k+n-1})_* \mu.
	\]
	Therefore, $\mu'$ has tail bound $h_n^k(\ell)$ by 
    Proposition~\ref{prop:onedec_prelim}.
\end{proof}

Let $\hr (0) = 1$ and 
\[
\hr (n) = \min \{ 1, r(1), \ldots, r(n)\}, \quad n \ge 1.
\]
Then, $\hr$ is nonincreasing and $\hr(1) = 1$. Note that
for a probability measure, any tail bound $r$ can be replaced by $\hat r$.
Define $\hh^k_n$ in an analogous way.

Let $\xi_1, \xi_2, \ldots$ be random variables on a probability space $(\Omega, \cF, \bP)$
with values in $\{n_0, n_0+1,\ldots\}$, such that
for all $\ell \geq n_0$,
\begin{equation}
	\label{eq:PXj}
	\begin{aligned}
		\bP(\xi_1 \geq \ell)
		&= \hr(\ell - n_0),
		\\
		\bP(\xi_{j+1} \geq \ell \mid \xi_1, \ldots, \xi_j)
		& = \hat{h}^{k+ \xi_1 + \cdots + \xi_{j-1}}_{\xi_j}(\ell - n_0)
		\quad \text{ for } j \geq 1
		.
	\end{aligned}
\end{equation}
Let $\tau$ be a geometrically distributed random variable with parameter $\theta$,
independent of $\{\xi_j\}$. That is,
\[
\bP(\tau = \ell) = (1-\theta)^{\ell-1} \theta \quad \forall \ell \in \{1,2,\ldots\}.
\]
Define
\[
S
= \xi_1 + \cdots + \xi_\tau
.
\]


\begin{lemma}
	\label{lem:proproprobab}
    Let $k \in \bZ$.
	Suppose that $\mu$ is a regular probability measure on $\bar{X}_k$
	with tail bound $r$.
	Then there exists a decomposition
	\[
	\mu = \sum_{n=1}^\infty \bP(S = n) \mu_n,
	\]
	where $\mu_n$ are probability measures on $\bar X_k$ such that $(\bar T_{k,k+n-1})_* \mu_n = \bar m$.
\end{lemma}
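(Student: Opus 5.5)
This is a renewal-type coupling in the spirit of \cite[Lemma~3.10]{KL21} and \cite[Section~4]{KKM19}. I would iterate Proposition~\ref{prop:onedec} along a sequence of random times, peeling off a $\theta$-fraction of $\bar m$ at each step. The random variables $\xi_j$ play the role of the waiting times between consecutive attempts, and $\tau$ is the index of the successful attempt.

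\textbf{Step 1: handling the initial measure.} The measure $\mu$ is regular on $\bar X_k$ but need not be supported on $\bar Y$. So I first decompose $\mu$ according to the first return time $\tau_k$: write $\mu = \sum_{\ell\ge1}\mu|_{B_{k,\ell}}$. By regularity, each $(\bar T_{k,k+\ell-1})_*(\mu|_{B_{k,\ell}})$ is a measure on $\bar Y$ with $|\cdot|_{\LL}\le K_1\le K_2$. After the extra $n_0$ steps needed in Proposition~\ref{prop:onedec}, each such piece can be split as $\theta\,\bar m$ plus a regular remainder.

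To match the law of $\xi_1$, I would regroup these pieces using the tail bound $\mu(\tau_k\ge \ell)\le r(\ell)\le$ (after replacing $r$ by $\hr$) $\hr(\ell)$. The standard device is to split $\mu=\sum_{\ell}(\hr(\ell-1)-\hr(\ell))\,\tilde\mu_\ell$ with probability measures $\tilde\mu_\ell$. Each $\tilde\mu_\ell$ is a convex combination of pieces that have returned to $\bar Y$ by time $\ell$, pushed forward to time $\ell$ while their density remains log-Lipschitz. This gives exactly $\bP(\xi_1=\ell+n_0)$ after shifting by $n_0$.

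\textbf{Step 2: the first coupling attempt.} At time $\xi_1$, Proposition~\ref{prop:onedec} applied to the part that has returned to $\bar Y$ gives
\[
(\bar T_{k,k+\xi_1-1})_*(\cdot)=\theta\,\bar m+(1-\theta)\mu',
\]
where $\mu'$ is regular on $\bar X_{k+\xi_1}$ with tail bound $h^{k}_{\xi_1}$. Pulling the $\bar m$-part back to $\bar X_k$ gives the component coupled at time $\xi_1$, i.e.\ the event $\{\tau=1\}$.

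\textbf{Step 3: induction.} The remainder $\mu'$ is regular with tail bound $\hh^{k+\xi_1+\cdots}_{\xi_j}$, so I repeat Steps~1--2 with $r$ replaced by this bound. This reproduces the conditional law of $\xi_{j+1}$ in \eqref{eq:PXj}. Inductively, the mass coupled at time $\xi_1+\cdots+\xi_j$ equals $\theta(1-\theta)^{j-1}$ times the joint law of $(\xi_1,\dots,\xi_j)$. Summing over $j$, the mass coupled at time $n$ is $\bP(S=n)$. Every pulled-back piece $\mu_n$ then satisfies $(\bar T_{k,k+n-1})_*\mu_n=\bar m$, since $\bar T$ maps the already coupled $\bar m$-part forward to $\bar m$ only in the sense that the coupled piece is identified at that time. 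For this reason I define $\mu_n$ as the preimage piece whose push-forward at exactly time $n$ equals $\bar m$.

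\textbf{Main obstacle.} The delicate point is Step~1: matching the mass distribution exactly to $\bP(\xi_1\ge\ell)=\hr(\ell-n_0)$, and not merely bounding it. The pieces must be sliced so that their masses are exactly $\hr(\ell-1)-\hr(\ell)$ while each retains $|\cdot|_{\LL}\le K_2$. I would try the trick from \cite{KKM19}: at each time, take the returned mass, which is at least the required amount, and remove only a fraction $t\le 1$ of it. This is a constant multiple, so it preserves $|\cdot|_{\LL}$. The unused fraction is carried to the next time, and combining the two uses Proposition~\ref{prop:regular}(a).

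Finally, because the construction never depends on $\mu$ beyond $r$, the resulting decomposition coefficients are $\bP(S=n)$ as claimed.
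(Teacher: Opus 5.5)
Your overall plan is the one the paper invokes; its proof simply defers to \cite[Lemmas~4.3 and 4.5]{KL25}, which follow \cite{KKM19}. That plan is: distribute $\mu$ according to the law of $\xi_1$, apply Proposition~\ref{prop:onedec}, pull the $\theta\bar m$ part back, iterate on the regular remainder, and sum over the geometric index $\tau$. However, the step you flag as the main obstacle relies on a mechanism that fails. A piece of $\mu$ that returned to $\bar Y$ at time $j<\ell$ is in general not in $\bar Y$ at time $\ell$. In the heterochaos application $R_k\ge 2$, so such a piece is certainly outside $\bar Y$ at time $j+1$. Hence ``pushed forward to time $\ell$ while their density remains log-Lipschitz'' is false. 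Likewise, carrying the unused fraction forward and adding it to later returns via Proposition~\ref{prop:regular}(a) gives only a regular measure. It does not give a measure on $\bar Y$ with $|\cdot|_{\LL}\le K_2$, which is what Proposition~\ref{prop:onedec} needs. A greedy push-forward scheme of this kind can actually stall, because the mass required at some time may be in the middle of an excursion.

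The missing point is that unused mass never has to be moved, because Proposition~\ref{prop:onedec} holds for \emph{every} $n\ge n_0$. Since $\mu(\tau_k\ge\ell)\le\hr(\ell)=\bP(\xi_1-n_0\ge\ell)$ for all $\ell\ge0$, $\xi_1-n_0$ stochastically dominates $\tau_k$. So you can split each $\mu|_{B_{k,j}}$ into constant multiples assigned to coupling times $t\ge j+n_0$, with total mass $\bP(\xi_1=t)$ at each $t$. Then apply Proposition~\ref{prop:onedec} to the push-forward of each such piece, starting from base time $k+j$ with $n=t-j\ge n_0$. The resulting remainder has tail bound $h^{k+j}_{t-j}$. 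The conditional law of $\xi_2$ in \eqref{eq:PXj} requires a bound depending only on $(k,t)$, so you need $h^{k+j}_{t-j}\le h^k_t$. This holds because the former is a sub-sum of the latter in \eqref{eq:hnk}; you assert the tail bound $h^k_{\xi_1}$ without this observation.

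There are also two smaller corrections. First, your weights are off by one. Since $\mu(\tau_k\le\ell)\ge1-\hr(\ell+1)$, the mass assigned to coupling time $\ell+n_0$ should be $\hr(\ell)-\hr(\ell+1)=\bP(\xi_1=\ell+n_0)$; your weights $\hr(\ell-1)-\hr(\ell)$ produce the law of $\xi_1+1$ instead. Second, the pull-back should be made explicit. If $(\bar T_{k,k+t-1})_*\nu=\theta\bar m+(1-\theta)\mu'$, let $\phi\in[0,1]$ be the density of $\theta\bar m$ with respect to $(\bar T_{k,k+t-1})_*\nu$. Then split $\nu=(\phi\circ\bar T_{k,k+t-1})\,\nu+(1-\phi\circ\bar T_{k,k+t-1})\,\nu$.
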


\begin{proof}
Having established Proposition \ref{prop:onedec}, the result follows by repeating the arguments in
\cite[Lemmas~4.3 and 4.5]{KL25}. We omit the details.
\end{proof}

To complete the proof of Theorem~\ref{thm:decdec}, it remains to estimate the tail
probabilities 
$\bP( S \ge n )$ in Lemma~\ref{lem:proproprobab}.

\begin{prop}
	For all $n \ge 1$,
	\[
	\bP( S \ge n ) \le C \exp(  - C' n^u ),
	\]
	where $C, C' > 0$ are constants depending only on $C_r, C_r', A, A', u, K_1, K_2$ and 
	$\mathfrak{d}$, $K$, $\lambda$, $\delta_\#$, $N$, $n_1,\ldots, n_N$.
\end{prop}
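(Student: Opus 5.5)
The plan is to bound the tail of $S=\xi_1+\cdots+\xi_\tau$ by splitting on the value of the geometric variable $\tau$ and using uniform stretched exponential tails for each $\xi_j$. First, record that each increment has a uniform tail bound. By \eqref{eq:PXj}, $\bP(\xi_1\ge \ell)=\hr(\ell-n_0)\le C_r\exp(-C_r'(\ell-n_0)^u)$ by \eqref{eq:rn}. Also, for $j\ge1$, $\bP(\xi_{j+1}\ge\ell\mid\xi_1,\ldots,\xi_j)=\hh^{k'}_{\xi_j}(\ell-n_0)\le C\exp(-(A'/2)(\ell-n_0)^u)$ by \eqref{eq:h_nk}. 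The constant there depends only on $C_h,A,A',u$ and is uniform in the shift $k'$ and in the value $\xi_j$. Absorbing the shift by $n_0$ into constants, since $(\ell-n_0)^u\ge \ell^u-n_0^u$ for $u\le1$, we get a single pair $C_0,c_0>0$ with
\[
\bP(\xi_{j+1}\ge \ell\mid \xi_1,\ldots,\xi_j)\le C_0 e^{-c_0\ell^u},\qquad \bP(\xi_1\ge\ell)\le C_0e^{-c_0\ell^u},
\]
for all $\ell$ and all $j$. We also use that $\tau$ is independent of $(\xi_j)$.

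Second, fix $n$ and a parameter $\epsilon>0$ to be chosen, and split
\[
\bP(S\ge n)\le \bP(\tau> \epsilon n^u)+\sum_{t\le \epsilon n^u}\bP(\tau=t)\,\bP(\xi_1+\cdots+\xi_t\ge n).
\]
The first term is $(1-\theta)^{\lfloor \epsilon n^u\rfloor}\le Ce^{-C'n^u}$. For the second, if $\xi_1+\cdots+\xi_t\ge n$ then some $\xi_j\ge n/t$. A union bound over $j$ with the conditional tails gives
\[
\bP(\xi_1+\cdots+\xi_t\ge n)\le tC_0e^{-c_0(n/t)^u}\le \epsilon n^uC_0e^{-c_0\epsilon^{-u}n^{u-u^2}}.
\]
This loses when $u<1$, because $(n/t)^u$ is only about $n^{u-u^2}$. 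So this naive splitting only gives $\exp(-cn^{u(1-u)})$ in general. It is fine for the case $u=1$ used in the paper, but not for the stated rate.

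The main obstacle is getting the sharp exponent $n^u$ for $u<1$. For that I would use a moment generating argument that respects subadditivity: by concavity, $(x+y)^u\le x^u+y^u$, so $S^u\le\sum_{j\le\tau}\xi_j^u$. It then suffices to bound $\bE e^{\gamma\sum_{j\le\tau}\xi_j^u}$ for small $\gamma>0$. Conditioning successively and using the uniform conditional tail, we have
\[
\bE[e^{\gamma\xi_{j+1}^u}\mid\xi_1,\ldots,\xi_j]\le 1+\int_0^\infty \gamma e^{\gamma s}\bP(\xi_{j+1}^u\ge s\mid\cdot)\,ds\le 1+\frac{\gamma C_0}{c_0-\gamma}e^{\gamma n_0^u}=:q(\gamma).
\]
Here $q(\gamma)\to1$ as $\gamma\to0$. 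Hence $\bE[e^{\gamma\sum_{j\le t}\xi_j^u}]\le q(\gamma)^t$. Summing against the geometric law gives $\bE e^{\gamma S^u}\le\sum_t(1-\theta)^{t-1}\theta q(\gamma)^t<\infty$ once $\gamma$ is small enough that $(1-\theta)q(\gamma)<1$. Markov's inequality then yields $\bP(S\ge n)\le \bE e^{\gamma S^u}\,e^{-\gamma n^u}$. Tracking dependencies, $\gamma$ and the constant depend only on $\theta$ (hence $K_1,K_2,\mathfrak d,\delta_0$, and through Proposition~\ref{prop:a5} on the system constants), on $C_r,C_r'$, and on $A,A',u,C_h$, as claimed.
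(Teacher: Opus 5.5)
Your proof is correct and takes essentially the paper's route. The paper also reduces to a uniform conditional tail $\bP(\xi_j\ge t\mid \xi_1,\ldots,\xi_{j-1})\le C_1e^{-C_1't^u}$. It then invokes \cite[Proposition~4.11]{KKM19}, $\bP(S_k\ge t)\le(1+\beta D)^k e^{-\beta t^u}$, which is exactly your argument via subadditivity of $t\mapsto t^u$ and iterated conditioning of $e^{\gamma\xi_j^u}$. Finally it sums against the geometric law of $\tau$, choosing $\beta$ so that $1+\beta D\le(1-\theta)^{-1/2}$. The only differences are cosmetic: you place $\tau$ inside the exponential moment and apply Markov once, where the paper applies Markov for each fixed $k$ and then sums, and your preliminary union-bound splitting is not needed.
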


\begin{proof} 
	Denote by $C,C_1$ and $C',C_1'$  various positive constants
	depending only on 
	$C_r, C_r', A, A', u, K_1, K_2$ and the system 
	constants $\mathfrak{d}$, $K$, $\lambda$, $\delta_\#$, $N$, $n_1,\ldots, n_N$.

    By \eqref{eq:hnk} and  
    \eqref{eq:rn}, for $\ell \ge 0$
	\begin{align*}
		&\bP(\xi_1 \ge \ell) \le C_r  C \exp( - C_r' \ell^u ), \\
		&\bP(\xi_{j+1} \geq \ell \mid \xi_1, \ldots, \xi_j) \le C \exp( - C' \ell^u ).
	\end{align*}
	Consequently, there exist $C_1, C_1' > 0$ such that for every $t \in [0, \infty)$ and every $j \ge 1$,
	\begin{align*}
		\bP(  \xi_{j} \ge t \mid \xi_1, \ldots, \xi_{j-1}  ) \le C_1 \exp( - C_1' t^u ).
	\end{align*}
	Then, arguing as in \cite[Proposition~4.11]{KKM19}, it follows that $S_k = \sum_{j=1}^k \xi_j$ 
	satisfies
	\[
	\bP( S_k \ge t )  \le (1 + \beta D )^{k} e^{ - \beta t^u },
	\]
	whenever $t \in [0, \infty)$, $k \ge 1$ and $\beta \le C_1' / 2$. Here 
	\[
	D = C_1 u \int_0^\infty s^{ u - 1 } e^{ - \frac12 C_1' s^u } \, ds.
	\]
	Set 
	\[
	\beta = 
    \min \biggl\{ 
    \frac12 \frac{  (1- \theta)^{ - \frac12 } - 1 }{D},
    \frac{C_1'}{2} \biggl\}, 
	\]
	so that
	\[
	1 + \beta D \le (1- \theta)^{ - \frac12 }.
	\]
	Then,
	\begin{align*}
		\bP(S \ge n) = \sum_{k \ge 1} \bP( S_k \ge n ) \bP(\tau = k) 
		\le  e^{ - \beta n^u }
		\theta 
		(1 - \theta)^{-1} \sum_{k=1}^\infty (1- \theta)^{ k / 2 } \le Ce^{ - \beta n^u },
	\end{align*}
    as required.
\end{proof}

\subsection*{Acknowledgments} 
JL was supported by the JSPS via the project
LEADER. HT was supported by the JSPS KAKENHI 25K21999 and 26H02003. \\

\noindent{\bf Data Availability} This article has no associated data and material.\\

\noindent{\bf Declarations}\medskip

\noindent{\bf Conflict of interest} We have no Conflict of interest.


\begin{thebibliography}{99}

\bibitem{AHNT15}
Aimino, R., Hu, H., Nicol, M.,  T\"or\"ok, A., Vaienti, S.: 
Polynomial loss of memory for maps of the interval with a neutral fixed point. 
Discrete Contin. Dyn. Syst. \textbf{35}, 793--806 (2015)

\bibitem{AimRou} Aimino, R., 
Rousseau, J.: Concentration inequalities for sequential dynamical systems of the unit interval. 
Ergod. Theory Dyn. Syst.  {\bf 36}, no. 8, 2384--2407 (2016)

\bibitem{AB26}
 Alves, J.F., Bahsoun, W.: Decay of correlations for partially hyperbolic skew-products.
arXiv preprint arXiv:2607.21516 (2026)

\bibitem{CNW23}
 Chung, Y.M., Nakano, Y., Wittsten, J.: Quenched limit theorems for random $U(1)$ extensions of expanding maps.
Discrete Contin. Dyn. Syst. \textbf{43}, 338--377 (2023)

\bibitem{CR07}
Conze, J.-P., Raugi, A.: Limit theorems for sequential expanding dynamical systems on $[0, 1]$. Contemp. Math. {\bf 430}, 89--121 (2007)

\bibitem{CN24}
Crimmins, H., Nakano, Y.: A spectral approach to quenched linear and higher-order response for partially hyperbolic dynamics. Ergod. Theory Dyn. Syst. \textbf{44}, 1026--1057 (2024)


\bibitem{CC25}
Cui, H., Wang, C.: Maximal large deviations for sequential dynamical systems. J. Stat. Phys. \textbf{192}, (2025)

\bibitem{DM22}
Dedecker, J., Merlev\'{e}de, F., Rio, E.: Rates of convergence in the central limit theorem for martingales in the non stationary setting.
Ann. Inst. H. Poincar\'{e} Probab. Statist., \textbf{58} (2022)

\bibitem{DL25}
Demers, M., Liverani, C.: Central limit theorem for sequential dynamical systems. arXiv preprint arXiv:2502.07765 (2025)

\bibitem{DH25}
Dolgopyat, D., Hafouta, Y.: Berry Esseen theorems for sequences of expanding maps. Probab. Theory Rel. Fields \textbf{193}, 1075--1119 (2025)



\bibitem{DGS23}
Dragi\v cevi\'c, D., Giulietti, P., Sedro, J.:
Quenched linear response for smooth expanding on average cocycles.
Commun. Math. Phys. \textbf{399}, 423--452 (2023)

\bibitem{DGTS25} Dragi\v cevi\' c, D.,  Gonz\'{a}lez-Tokman, C., Sedro, J.: Linear response for random and sequential intermittent maps. J. Lond. Math. Soc. \textbf{111},  e70150, 39pp (2025)

\bibitem{DH26}
Dragi\v cevi\'c, D., Hafouta, Y.: Quenched and annealed linear response for some partially hyperbolic skew products. arXiv preprint arXiv:2604.20402 (2026)

\bibitem{DL26}
Dragi\v cevi\'c, D., Lepp\"anen, J.:
Regularity of the variance in quenched CLT for random intermittent dynamical systems.
Ergod. Theory Dyn. Syst. \textbf{46}, 1127--1153 (2026)


\bibitem{FV22}
Fleming--V\'{a}zquez, N.: 
Functional correlation bounds and optimal iterated moment bounds for slowly-mixing nonuniformly hyperbolic maps.
Commun. Math. Phys. \textbf{391}, 173--198 (2022)

\bibitem{FFV17}
Freitas, A.C., Freitas, J., Vaienti, S.: 
Extreme value laws for non stationary processes generated by sequential and random dynamical systems. 
Ann. Inst. H. Poincar\'{e} Probab. Statist. \textbf{53}, 1341--1370 (2017)

\bibitem{FFV18}
Freitas, A.C., Freitas, J., Vaienti, S.: 
Extreme Value Laws for sequences of intermittent maps. 
Proc. Am. Math. Soc. \textbf{146}, 2103--2116 (2018)

\bibitem{GL26}
Galatolo, S., Lucarini, V.: A mathematical framework for linear response theory for nonautonomous systems. arXiv preprint arXiv:2603.19509 (2026)

\bibitem{GMS18}
Gallou\"et, T., Mijoule, G., Swan, Y.:
Regularity of solutions of the Stein equation and rates in the multivariate central limit theorem. arXiv preprint
arXiv:1805.01720 (2018)

\bibitem{GO13}
Gupta, C., Ott, W., T\"or\"ok, A.: Memory loss for time-dependent piecewise expanding systems in higher dimension. Math. Res. Lett. {\bf 20}, 
141--161 (2013)

\bibitem{H22}
Hafouta, Y.: Limit theorems for random non-uniformly expanding or hyperbolic maps with exponential tails. 
Ann. H. Poincar\'{e} \textbf{23}, 293--332 (2022)

 \bibitem{HI05} Hamachi, T., Inoue, K.: Embedding of shifts of finite type into the Dyck shift. Monatsh. Math. {\bf 145}, 107--129 (2005)

\bibitem{HNTV17}
Haydn, N., Nicol, M., T\"or\"ok, A.,  Vaienti, S.: Almost sure invariance principle for sequential and non-stationary dynamical systems. Trans. Am. Math. Soc.  \textbf{369}, 5293--5316 (2017)

\bibitem{Hof79} Hofbauer, F.: On intrinsic ergodicity of
    piecewise monotonic transformations with positive entropy I, II.
    Israel J. Math. {\bf 34}, 
    213--237 (1979), {\bf 38}, 
    107--115 (1981)


\bibitem{KKM19} Korepanov, A., Kosloff, Z., Melbourne, I.:
Explicit coupling argument for non-uniformly hyperbolic transformations.
Proc. Roy. Soc. Edinburgh Sect. A {\bf 149}, no. 1, 101--130  (2019)

\bibitem{KL21} Korepanov, A.,  Lepp\"anen, J.: Loss of memory and moment bounds for nonstationary intermittent dynamical systems. Commun. Math. Phys. {\bf 385}, 905--935 (2021)

\bibitem{KL25} Korepanov, A.,  Lepp\"anen, J.: Improved polynomial rates of memory loss for nonstationary intermittent dynamical systems. 
 Phys. D {\bf 483}, Paper No. 134939. 37 (2025)

\bibitem{Kri74} 
 Krieger, W.: On the uniqueness of the equilibrium state. Math. Syst. Theory {\bf 8}, 97--104 (1974/75)

\bibitem{LNN25} Lepp\"anen, J., Nakajima, Y., Nakano, Y.: 
Error bounds in a smooth metric for Brownian approximation of dynamical systems via Stein's method.
J. Stat. Phys. \textbf{192}, no. 12, Paper No. 168, 45 pp (2025)

\bibitem{LNN26} Lepp\"anen, J., Nakajima, Y., Nakano, Y.: 
Functional correlation bound for random Lasota--Yorke maps with holes and its applications to conditional normal approximations.
arXiv preprint arXiv:2604.17203 (2026)

\bibitem{LS20} Lepp\"anen, J., Stenlund, M.: Sunklodas' approach to normal approximation for time-dependent dynamical systems. 
J. Stat. Phys. {\bf 181}, no. 5, 1523--564 (2020)

\bibitem{LW24}
Liu, Z., Wang, Z.: Wasserstein convergence rates in the invariance principle for sequential dynamical systems. Nonlinearity \textbf{37}, 
1172--1191 (2024) 

\bibitem{MO14} Mohapatra, A., Ott, W.: Memory loss for nonequilibrium open dynamical systems.  Discrete Contin. Dyn. Syst. \textbf{34}, 
3747--3759 (2014)

\bibitem{NW15}
Nakano, Y., Wittsten, J.: On the spectra of quenched random perturbations of partially expanding maps on the torus. Nonlinearity \textbf{28}, 951--1002 (2015)

\bibitem{NTV18} Nicol, M., T\"or\"ok, A., Vaienti, S.:
Central limit theorems for sequential and random intermittent dynamical systems.
Ergod. Theory Dyn. Syst.  \textbf{38},  1127--1153
 (2018)

\bibitem{STYY} Saiki, Y., Takahasi, H., Yamamoto, K., Yorke, J.A.: The dynamics of the heterochaos baker maps. Chinese Annals of Mathematics, Proceedings of Shishikura 60, to appear 

\bibitem{STY21}
Saiki, Y., Takahasi, H., Yorke, J.A.: Piecewise linear maps with heterogeneous chaos. Nonlinearity {\bf 34}, 5744--5761 (2021)


\bibitem{S77} Selmer, E.S.: On the linear Diophantine problem of Frobenius.
J. Reine Angew. Math. {\bf 293/294}, 1--17 (1977)

\bibitem{SYZ13}
Stenlund, M., Young, L.-S., Zhang, H.: Dispersing billiards with moving scatterers. Commun. Math. Phys. \textbf{322}, 909--955 (2013) 

\bibitem{S22}
Su, Y.: Vector-valued almost sure invariance principles for (non)stationary and random dynamical systems.
Trans. Am. Math. Soc. \textbf{375},   4809--4848 (2022)

\bibitem{T25}
Takahasi, H.: Exponential mixing for heterochaos baker maps and the Dyck system. J. Dyn. Differ. Equ. {\bf 37}, no. 3, 2409--2436 (2025)

\bibitem{TT25}
Takahasi, H., Tsujii, M.: Polynomial rate of mixing for the heterochaos baker maps with mostly neutral center. J. Dyn. Differ. Equ. {\bf 38},  no. 2, 
1099--1124 (2026)

\bibitem{TY23} Takahasi, H., Yamamoto, K.: Heterochaos baker maps and the Dyck system: maximal entropy measures and a mechanism for the breakdown of entropy approachability. Proc. Am. Math. Soc. {\bf 153}, no. 8, 3335--3350 (2025)

\bibitem{You98} 
Young, L.-S.:
Statistical properties of dynamical systems with some hyperbolicity.
Ann. Math. {\bf 147}, 585--650 (1998)


\end{thebibliography}
\end{document}